\documentclass[11pt, twoside]{article}
%\pdfoutput=1

%
\usepackage{graphicx}
%\graphicspath{{figures/}}
\usepackage[caption=false]{subfig}
%% Used for papers with subtables created with the subfig package
\captionsetup[subtable]{position=bottom}
\captionsetup[table]{position=bottom}

\usepackage{array}
\usepackage{threeparttable} 
\usepackage{amsmath}
\usepackage{amssymb,amsfonts}
\usepackage{amsthm}
\usepackage{bm}
\usepackage{mathrsfs}
\usepackage{amssymb}
\usepackage{multirow}
%\usepackage{stmaryrd}
% Margins
\usepackage[margin=1in]{geometry}
% Algorithm stuff
\usepackage{algorithm}
\usepackage{algorithmic}
\numberwithin{equation}{section}
\usepackage{multirow}
\usepackage{makecell}
\usepackage{booktabs}
% Author packages and commands
\theoremstyle{definition}
\newtheorem{theorem}{Theorem}

\newtheorem{lemma}{Lemma}

\newtheorem{remark}{Remark}
\newtheorem{example}{Example}
\newtheorem{assumption}{Assumption}

\usepackage{cite}
\usepackage{hyperref}
\usepackage[nameinlink]{cleveref}
\newcommand{\vertiii}[1]{{\left\vert\kern-0.25ex\left\vert\kern-0.25ex\left\vert #1 
		\right\vert\kern-0.25ex\right\vert\kern-0.25ex\right\vert}}

% Headers and Footers
\usepackage{fancyhdr}
\pagestyle{fancy}
            % no line under header
\fancyhead[EC]{G.\ Chen,   Y.\ Zhang, D.\ Zuo}
\fancyhead[OC]{An Incremental SVD Method for Non-Fickian Flows in Porous Media}
\fancyhead[L,R]{}
%\lfoot{}
%\rfoot{}
\cfoot{\thepage}

\begin{document}
	
	\title{An Incremental SVD Method for Non-Fickian Flows in Porous Media: Addressing Storage and Computational Challenges}%\tnoteref{mytitlenote}}

\author{Gang Chen%
	\thanks{ College of  Mathematics, Sichuan University, Chengdu 610064, China (\mbox{cglwdm@scu.edu.cn},\mbox{zuodujin@stu.scu.edu.cn}).}
	\and
	Yangwen Zhang%
	\thanks{Department of Mathematics, University of Louisiana at Lafayette, Lafayette, LA 70503, USA  (\mbox{yangwen.zhang@louisiana.edu}).}
	\and
	Dujin Zuo		\footnotemark[1]}

\date{\today}

\maketitle

\begin{abstract}
 	It is well known that the numerical solution of the Non-Fickian flows at the current stage depends on all previous time instances. Consequently, the storage requirement increases linearly, while the computational complexity grows quadratically with the number of time steps. This presents a significant challenge for numerical simulations.   While numerous existing methods address this issue, our proposed approach stems from a data science perspective and maintains uniformity. Our method relies solely on the rank of the solution data, dissociating itself from dependency on any specific partial differential equation (PDE). In this paper, we make the assumption that the solution data exhibits approximate low rank. Here, we present a memory-free algorithm, based on the incremental SVD technique, that exhibits only linear growth in computational complexity as the number of time steps increases. We prove that the error between the solutions generated by the conventional algorithm and our innovative approach lies within the scope of machine error.  Numerical experiments are showcased to affirm the accuracy and efficiency gains in terms of both memory usage and computational expenses.
\end{abstract}

\section{Introduction}
The non-Fickian ﬂow of ﬂuid in porous media \cite{MR1951906} is complicated by the history eﬀect which characterizes various mixing length growth of the ﬂow and can be modeled by an integro-diﬀerential equation: Find $u = u(x,t)$ such that
\begin{subequations}\label{integro-diff-Eq}
	\begin{align}
		u_t+\mathcal Au+\int_0^t K(t-s) \mathcal Bu(s)\ {\rm d}s&=f(x,t),\quad \text{in } \Omega\times \left( 0,T\right], \\
		u&=0,\qquad\quad~ \text{on } \partial \Omega\times \left( 0,T\right],\\
		u(x,0)&=u_0(x),\quad~ \text{in } \Omega,
	\end{align}
\end{subequations}
where $\Omega\in \mathbb{R}^d (d = 2,3)$ is a bounded convex polygonal domain or polyhedral domain  with  boundary $\partial\Omega$, $u_0$ is a given function defined on $\Omega$, $K(t)$ is a nonnegative memory kernel, singular or non-singular, $f(x,t)$ is a known function,  $\mathcal A$ is a symmetric positive definite second-order elliptic operator  and is of the form:
\begin{align*}
	\mathcal A=-\sum_{i,j=1}^d\frac{\partial }{\partial x_i}(a_{ij}(x)\frac{\partial}{\partial x_j})&+a(x)I, \quad a(x)\ge 0,\\
	a_{ij}(x)=a_{ji}(x),  i,j=1,\cdots,d,  a_1\sum_{i=1}^{d}\xi_i^2&\ge \sum_{i,j=1}^{d}a_{ij}\xi_i\xi_j\ge a_0\sum_{i=1}^{d}\xi_i^2,  a_1\geq a_0> 0.
\end{align*}
The operator $\mathcal B$ is any second-order linear operator and takes the following form:
\begin{align*}
	\mathcal B=-\sum_{i,j=1}^d\frac{\partial }{\partial x_i}(b_{ij}(x)\frac{\partial}{\partial x_j})+\sum_{i=1}^{d}b_i(x)\frac{\partial}{\partial x_i}+b(x)I.
\end{align*}

Numerous numerical approaches have been put forth to address the problem  \eqref{integro-diff-Eq}. Among the array of computational techniques available, finite difference for time discretization and Galerkin finite element for spatial discretization have gained significant prominence. Time discretization methods encompass strategies rooted in backward Euler, second-order backward differentiation formula (BDF2) scheme, Crank-Nicolson, Richard extrapolation, Runge-Kutta methods, see \cite{WangWansheng2018Teaf,LiXiaoli2016Atbf,YangXuehua2013Cmfs,LuoMan2013CQBN,ChenYanping2012Read,OstermannAlexander2023EERM,MR3614283,MR2608473,MR2608473,QiuWenlin2023AFEG}   and references therein.  Spatial discretization techniques encompass a range of methodologies, including conventional finite element methods \cite{MR1686149,MR2670111,MR3335208,MR2608473,MR1897408,MR1393903,MR3958333}, mixed finite element methods \cite{MR1951906,MR2447252,MR1713237,MR2551194}, finite volume methods \cite{MR2206438}, and discontinuous Galerkin methods \cite{MR2272610}. Further information can be found in references \cite{MR3778339,MR3557149,MR1034918,MR2580558,MR859017,MR1220827} and the sources cited within them.

One key point of  numerical simulation for \eqref{integro-diff-Eq} is  how to discretize the memory kernel term. Among extensive existing literatures, there are plenty of method devised for  discretizing some special kernel , for example, $ K(t)=\rho e^{-\delta t}  $ in \cite{MR3778339,MR3557149} . Discretizing this kind of kernel is usually convenient and easily to computed , since one can take advantage of the preceding time numerical solutions fully by devising some specific fully discretization scheme and therefore avoiding storing these preceding numerical solutions. However, for some special kernel , such as nonsingular kernel $ K(t)=\log{(1+t)} $ or some weakly singular kernel $ K(t)= e^{-\lambda t}\frac{1}{\Gamma(\alpha)}t^{\alpha-1}$, or  variable-order Caputo fractional kernel, it becomes necessary to store all preceding time numerical solutions when computing the solution at the next step. For general direct method, the storage requirement increases linearly while the computational complexity grows quadratically with the number of time steps. This poses a substantial challenge for numerical simulations. To address this issue, numerous fast algorithms have been proposed to reduce storage costs and enhance computational performance. For instance, Brunner et al. \cite{LiangHui2019Tcoc,LiangHui2020Cmfi,J.-P.Kauthen1997Ccat,LIANGHUI2016IETO,MR3084175} propose a collocation method for solving general systems of linear integral-algebraic equations (IAEs).  Additionally, Lubich et al. propose convolution quadrature as an approach to approximate the integral term of the memory kernel \cite{MR2231714,EduardoCuesta2006Cqtd,LubichC.1988Cqad,MR0923707}. Jiang et al. \cite{MR3639246} propose extensive fast algorithms  for dealing with Caputo derivative and make good progress. 
For further rapid algorithms, refer to \cite{KwonKiwoon2003Apmf,MR1163355,HuangYuxiang2022Eeot,MR3958333,GuoLing2019Emmf} and the references therein.

%One challenge faced by these numerical schemes is the need to store all preceding time numerical solutions when computing the solution at the next step. The storage requirement increases linearly while the computational complexity grows quadratically with the number of time steps, see more details in \Cref{sec2}. This poses a substantial challenge for numerical simulations. To address this issue, numerous fast algorithms have been proposed to reduce storage costs and enhance computational performance. For instance, Brunner et al. \cite{LiangHui2019Tcoc,LiangHui2020Cmfi,J.-P.Kauthen1997Ccat,LIANGHUI2016IETO,MR3084175} propose a collocation method for solving general systems of linear integral-algebraic equations (IAEs).  Additionally, Lubich et al. propose convolution quadrature as an approach to approximate the integral term of the memory kernel \cite{MR2231714,EduardoCuesta2006Cqtd,LubichC.1988Cqad,MR0923707}. For further rapid algorithms, refer to \cite{KwonKiwoon2003Apmf,MR1163355,HuangYuxiang2022Eeot,MR3958333,GuoLing2019Emmf} and the references therein.

In this paper, we devise a novel rapid algorithm by incorporating the incremental Singular Value Decomposition (SVD) technique. We note that our innovative method keeps uniformity in spite of the form of  memory kernel $ K(t) $ via the incremental SVD technique. The incremental SVD is  initially proposed by Brand in \cite{brand2002incremental} as an efficient method for computing the SVD of a low-rank matrix. This approach begins by initializing the incremental SVD with a small dataset and subsequently updating it as new data becomes available.  Its applications span various domains, including recommender systems \cite{frolov2017tensor}, proper orthogonal decomposition \cite{MR4017489,MR3775096}, dynamic mode decomposition \cite{hemati2014dynamic}, and visual tracking \cite{ross2008incremental}.

The algorithm necessitates the computation of thousands or even millions of orthogonal matrices, which are subsequently multiplied together. Nonetheless, these multiplications have the potential to destroy the orthogonality. Consequently, many reorthogonalizations are imperative in practical implementation.  Brand addressed this matter in \cite{MR2214744}, noting, ``It is an open question how often this is necessary to guarantee a certain overall level of numerical precision; it does not change the overall complexity.'' A subsequent work \cite{zhang2022answer} provided a response to this query, suggesting a method to mitigate the need for extensive orthogonal matrix computations and thereby eliminating the necessity for reorthogonalizations. Moreover, they demonstrated that this modification does not adversely impact the algorithm's outcomes.

Our approach is to simultaneously solve the integro-differential equation \eqref{integro-diff-Eq} and compress the solution data  using the incremental SVD method. The  incremental SVD  algorithm can be easily used in conjunction with a time stepping code for simulating equation \eqref{integro-diff-Eq}. This approach enables storing solution data in several smaller matrices, alleviating the need for a  huge dense matrix as often seen in traditional methods.   Consequently, by presuming that the solution data demonstrates an approximate low-rank characteristic,  we are able to address the issue of data storage in solving the integro-differential equation \eqref{integro-diff-Eq}.

The subsequent sections of this paper are structured as follows for a coherent understanding. In  \Cref{sec2}, we provide a comprehensive overview of the standard finite element method used for spatial discretization, coupled with a second-order numerical scheme for time discretization to address the integro-differential equation \eqref{integro-diff-Eq} in cases where the kernel $K(t)$ remains non-singular. Our demonstration reveals a linear increase in storage requirements alongside quadratic growth in computational complexity with respect to the number of time steps $n$. Moving to  \Cref{sec3}, we delve into a detailed review of the enhanced incremental  SVD  algorithm introduced in \cite{zhang2022answer}. In  \Cref{ISVD_IDeq}, we introduce a novel algorithm to solve the integro-differential equation \eqref{integro-diff-Eq}, leveraging the improved incremental SVD methodology. Our novel approach ensures memory efficiency for low-rank data and exhibits linear growth in computational complexity relative to the progression of time steps. In   \Cref{Error_estimate}, we present a rigorous error analysis of our method when $K(t)$ remains non-singular, demonstrating that our approach attains error convergence comparable to conventional finite element methods. 
In   \Cref{singularkernel}, we extend our novel method to cases where $K(t)$ exhibits weak singularity, presenting corresponding results. The subsequent numerical experiments in  \Cref{Numerical_experiments} further validate the efficacy of our novel algorithm in terms of accuracy, memory utilization, and computational efficiency. {Moreover, we extend our innovative method to variable-order time-fractional equation in \Cref{Numerical_experiments}, the corresponding theoretical analysis will be given out in the future research. }Finally, we conclude by discussing potential avenues for future research in the conclusion.
\section{The finite element method for integro-differential euqations}\label{sec2}
In this section, our aim is to introduce the finite element method tailored for handling integro-differential equations \eqref{integro-diff-Eq} characterized by a non-singular kernel.

Let $\mathcal{T}_h$ represent a collection of regular simplices $K$ that partition the domain $\Omega$, and $\mathcal{P}_k(K)$ ($k\ge 1$) denote the polynomial space defined on the element $K$ with a maximum degree of $k$. Utilizing the triangulation $\mathcal{T}_h$, we can define the continuous piecewise finite element space $V_h$ as follows:
\begin{align*}
	V_h=\left\lbrace v_h\in H_0^1(\Omega): v_h|_K\in\mathcal{P}_{k}(K),\forall K\in \mathcal{T}_h \right\rbrace. 
\end{align*}

The semidiscrete Galerkin scheme for the integro-differential equation \eqref{integro-diff-Eq} can be expressed as follows:  seek $u_h(t)\in V_h$ such that
\begin{subequations}\label{semi-discretization}
	\begin{align}
		(u_{h,t},v_h)+\mathscr A(u_h,v_h)+\int_0^t  K(t-s)\mathscr {B}(u_h(s),v_h)\ {\rm d}s&=(f,v_h),\quad \forall v_h\in V_h,\\
		u_h(0)&=u_h^0\in V_h, 
	\end{align}
\end{subequations}
where $u_h^0$ corresponds to the projection of $u_0$ onto the space $V_h$,  $u_{h,t}$ denotes the time derivative of $u_h$, while $\mathscr{A}(\cdot,\cdot)$, $\mathscr{B}(\cdot,\cdot)$ represent the bilinear forms associated with the operator $\mathcal A$ and $ \mathcal B $, defined on $H_0^1(\Omega)\times H_0^1(\Omega)$, and take the following forms:
\begin{align*}
	\mathscr 	A(u,v)&=\sum_{i,j=1}^d \left(a_{ij}(x)\frac{\partial u}{\partial x_i},\frac{\partial v}{\partial x_j}\right)+(a(x)u,v),\\
	\mathscr 	B(u,v)&=\sum_{i,j=1}^d \left(b_{ij}(x)\frac{\partial u}{\partial x_i},\frac{\partial v}{\partial x_j}\right)+\sum_{i=1}^{d}(b_i(x)\frac{\partial u}{\partial x_i},v)+(b(x)u,v).
\end{align*}
Here, $(\cdot,\cdot)$ represents the inner product in $L^2(\Omega)$.

In the temporal discretization process, we utilize the second-order Crank-Nicolson scheme. To accomplish this, we define a set of $N+1$ nodes $\{t_i\}_{i=0}^N$ that partition the time domain $\left[0, T\right]$ into $N$ steps, where $0=t_0<t_1<t_2<\ldots< t_N=T$. Ensuring convergence, we assume a quasi-uniform partitioning of the time domain, indicating the existence of a positive constant $C_1>0$. This condition is expressed as:
\begin{align}\label{time-quasi-uniform}
	\max\limits_{1 \leq n \leq N}\Delta t_n=\Delta t\le C_1(\delta t)=C_1 \min_{1\le n\le N}\Delta t_n,
\end{align}
where $  \Delta t_j = t_j - t_{j-1}. $

For the temporal discretization of the integral term in $ \eqref{semi-discretization} $, we apply the midpoint rule, akin to the approach in \cite{GuoYingwen2022Ceaf}, to approximate the memory term. Consequently, the fully discrete scheme is represented as follows: given $u_h^0 \in V_h$, our objective is to determine $u_h^{n} \in V_h$ for all $n = 1, \ldots, N$, such that
\begin{align}\label{discrete-eq0}
	\left(\partial_t^+ u_h^n,v_h\right)+\mathscr A(\bar u_h^{n},v_h)+\mathscr{B}(\bar{u}_{K,h}^{\Delta t,n},v_h)=(\bar f^n,v_h),  \forall v_h\in V_h,
\end{align}
where
\begin{gather*}
	\bar{u}_h^n=\frac{u_h^n+u_h^{n-1}}{2},\quad 
	\bar{u}_{K,h}^{\Delta t,n}=(\bar{t}_n-t_{n-1})K(0)\bar{u}_h^n+\sum_{j=1}^{n-1}K(\bar{t}_n-\bar{t}_j)\bar{u}_h^j\Delta t_j,\\
	\bar{t}_n=\frac{t_n+t_{n-1}}{2},\; \bar{f}^n=\frac{f(t_{n-1})+f(t_n)}{2},\;
	\partial_t^+ u_h^n=\frac{u_h^n-u_h^{n-1}}{\Delta t_n}.
\end{gather*}
Next, we assume that  $
V_h=\textup{span}\left\lbrace \phi_1,\cdots,\phi_m\right\rbrace$,  and we define the matrices $M$, $A$, $ B $  and vector $b_{i+1} $ as follows:
\begin{gather}\label{eq_b1}
	\begin{split}
		M_{ij} =(\phi_i,\phi_j),  A_{ij}=\sum_{k,\ell=1}^{d}\left(a_{k\ell}(x)\frac{\partial \phi_i}{\partial x_k},\frac{\partial \phi_j}{\partial x_\ell}\right)+(a(x)\phi_i,\phi_j),  (b_{n})_j = (\bar f^{n},  \phi_j), \\
		B_{ij} =\sum_{k,\ell=1}^{d}\left(b_{k\ell}(x)\frac{\partial \phi_i}{\partial x_k},\frac{\partial \phi_j}{\partial x_\ell}\right)
		+\sum_{k=1}^{d}(b_k(x)\frac{\partial\phi_i}{\partial x_k},\phi_j)+(b(x)\phi_i,\phi_j), 
	\end{split}
\end{gather}
where  $X_{ij}$ represents the element of row $i$ and column $j$ of matrix $X$, and $(\alpha)_j$ denotes the $j$-th component of the vector $\alpha$. Let $u_{n}$ be the coefficients of $u_h^{n}$ at time $t_{n}$, given by:
\begin{align}\label{vevtor_demonstration}
	u_h^{n}=\sum_{j=1}^{m}(u_{n})_j\phi_j.
\end{align}

Substituting  \eqref{eq_b1} and \eqref{vevtor_demonstration} into \eqref{discrete-eq0}, we obtain the following algebraic system:
\begin{align}\label{algebraic_system}
	\begin{split}
		&(M+\frac{\Delta t_n}{2}A+\frac{\Delta t_n}{2}(\bar{t}_n-t_{n-1})K(0)B)u_n\\
		&\quad=(M-\frac{\Delta t_n}{2}A-\frac{\Delta t_n}{2}(\bar{t}_n-t_{n-1})K(0)B)u_{n-1}\\
		&\qquad-\Delta t_n\sum_{j=1}^{n-1}K(\bar{t}_n-\bar{t}_j)\Delta t_jB\bar{u}_{j}+\Delta t_nb_n.
	\end{split}
\end{align}
By solving the above algebraic system, we can obtain the solution to  \eqref{discrete-eq0}.
The algorithm is presented in \cref{alg101}. 
\begin{algorithm}
	\caption{(Finite element and backward Euler  method for solving  equation \eqref{integro-diff-Eq})}
	\label{alg101}
	\begin{algorithmic}[1]
		\REQUIRE  
		$ \Delta t$, $M\in\mathbb{R}^{m\times m}$, $A\in\mathbb{R}^{m\times m}$,$B\in\mathbb{R}^{m\times m}$, $u_0$
		\STATE Set $ {U}=\texttt{zeros}(m,N+1) $, $U(:,1) = u_0$.
		\FOR{$ n=1 $ to $ N $}
		\STATE Set $ \displaystyle A_1=M+\frac{\Delta t_n}{2}A+\frac{\Delta t_n}{2}(\bar{t}_n-t_{n-1})K(0)B,
		A_2=M-\frac{\Delta t_n}{2}A-\frac{\Delta t_n}{2}(\bar{t}_n-t_{n-1})K(0)B $
		\STATE Compute the coefficients $ K(\bar{t}_n-\bar{t}_j)$ and  get the load vector  $ b_{n}$.
		\STATE $ \displaystyle \widetilde b_{n}=A_2U(:,n)-\Delta t_nB\sum_{j=1}^{n-1}K(\bar{t}_n-\bar{t}_j)\Delta t_j\frac{U(:,j+1)+U(:j)}{2}+\Delta t_nb_n$.
		\STATE Solve $ A_1u_{n}=\widetilde b_{n}$
		\STATE $ {U}(:,n+1)=u_{n} $
		\ENDFOR
		\ENSURE $ u_n$  
	\end{algorithmic}
\end{algorithm}

To compute the numerical solution $u_h^{n+1}$, all the preceding time numerical solutions $\{u_h^j\}_{j=0}^{n}$ must be available.  Therefore, there is a potential need for us to store all the numerical solutions $\{u_h^j\}_{j=0}^{i}$.  We remark that not all memory kernel $ K(t) $ will result in this issue. For example, for  the kernel $ K(t)=\rho e^{-\delta t} $  or $ K(t)=\sin{(t)} $, we can avoid storing data by slightly modifing fully discretization scheme. In this work, we consider some special kernels, which needs to store data for the next step computation. Note that the storage cost of the history term is
\begin{align}\label{FEM_memory}
	\mathcal O(mn),
\end{align}
and the computational cost is
\begin{align}\label{FEM_computational}
	\sum_{i=1}^n \sum_{j=0}^{i} \mathcal O (m) = \mathcal O(mn^2).
\end{align}

Consequently, the storage requirement increases linearly while the computational complexity grows quadratically with the number of time steps $n$.  This poses a substantial challenge for data storage and numerical simulations.

Hence, we put forward an unified fast algorithm with the help of the incremental singular value decomposition (SVD) method to address the memory and computational complexity issue mentioned above, provided we make the assumption that the solution data exhibits approximate low rank.
\section{The incremental SVD method}\label{sec3}

We begin by introducing several crucial definitions and concepts that are essential for understanding the incremental SVD method. Given a vector $u \in \mathbb{R}^m$ and an integer $r$ satisfying $r \leq m$, the notation $u(1: r)$ denotes the first $r$ components of $u$. Likewise, for a matrix $U \in \mathbb{R}^{m \times n}$, we use the notation $U(p: q, r: s)$ to refer to the submatrix of $U$ that encompasses the entries from rows $p$ to $q$ and columns $r$ to $s$.

Throughout this section, we make the assumption that the rank of the matrix $U \in \mathbb{R}^{m \times n}$ is low, specifically denoted by $\texttt{rank} (U) \ll \min\{ m, n\}$.

Next, we present an improved version of Brand’s incremental SVD algorithm from \cite{zhang2022answer}. The algorithm updates the SVD of a matrix when one or more columns are added to the matrix.  We split the process into four steps.
\subsection*{Step 1: Initialization} 
Assuming that the first column of matrix $U$, denoted as $u_1$, is non-zero, we can proceed to initialize the SVD of $u_1$ using the following approach:
\begin{align*}
	\Sigma=(u_1^\top u_1)^{1/2},\qquad Q=u_1\Sigma^{-1},\qquad R=1.
\end{align*}
The algorithm is shown in  \Cref{alg201}.
\begin{algorithm}
	\caption{(Initialize ISVD)}
	\label{alg201}
	\begin{algorithmic}[1]	
		\REQUIRE  %算法的输入参数：Input
		$ u_1\in \mathbb{R}^m$\\
		\STATE $\Sigma=(u_1^\top u_1)^{1/2};\quad Q=u_1\Sigma^{-1};\quad R=1 $\\
		\ENSURE $ Q,\Sigma, R $ %算法的输出：Output
	\end{algorithmic}
\end{algorithm}

Assuming we already have  the  truncated SVD of rank $k$ for the first $\ell$ columns of matrix $U$, denoted as $U_{\ell}$:
\begin{align}\label{eq201}
	U_\ell \approx Q\Sigma R^\top,\quad \textup{with} \quad Q^\top Q=I_k,\quad R^\top R=I_k,\quad \Sigma= \texttt{diag}(\sigma_1,\cdots,\sigma_k),
\end{align}
where $ \Sigma \in \mathbb{R}^{k\times k}$ is a diagonal matrix with the $k$ ordered singular values of $ U_\ell$ on the diagonal, $ Q\in \mathbb{R}^{m\times k} $ is the matrix of the corresponding $k$ left singular vectors of $ U_\ell $ and $ R\in\mathbb{R}^{\ell\times k} $ is the matrix of the corresponding $ k $ right singular vectors of $ U_\ell $.

Given our assumption that the matrix $U$ is low rank, it is reasonable to expect that most of the columns of $U$ are either linearly dependent or nearly linearly dependent on the vectors in $Q \in \mathbb{R}^{m \times k}$. Without loss of generality, we assume that the next $s$ vectors, denoted as $\left\lbrace u_{\ell+1}, \ldots, u_{\ell+s}\right\rbrace$, their residuals are less than a specified tolerance when projected onto the subspace spanned by the columns of $Q$. However, the residual of $u_{\ell+s+1}$ is larger than the given tolerance. In other words,
\begin{subequations}
	\begin{align}
		|u_i-QQ^\top u_i  | &< \texttt{tol}, \quad i=\ell+1,\cdots, \ell+s,\label{lessthantol}\\
		|u_i-QQ^\top u_i   |  &\ge \texttt{tol}, \quad i=\ell+s+1.\label{largerthantol}
	\end{align}
\end{subequations}
The symbol $|\cdot|$ denotes the Euclidean norm within the realm of $\mathbb{R}^m$.
\subsection*{Step 2: Update the SVD of $ U_{\ell+s} $ ($p$-truncation)}
By  the  assumption \eqref{lessthantol}, we have 
\begin{align*}
	U_{\ell+s}&= \left[ U_\ell\mid u_{\ell+1}\mid\cdots\mid u_{\ell+s}\right] \\
	&\approx \left[ Q\Sigma R^\top \mid u_{\ell+1}\mid\cdots\mid u_{\ell+s}\right] \\
	&\approx \left[ Q\Sigma R^\top \mid QQ^\top u_{\ell+1}\mid\cdots\mid QQ^\top u_{\ell+s}\right] \\
	&=Q \underbrace{\left[ \Sigma\mid Q^\top u_{\ell+1}\mid\cdots\mid Q^\top u_{\ell+s}\right] }_{Y}\left[\begin{array}{cc}
		R & 0 \\
		0 & I_s
	\end{array}\right]^\top.  
\end{align*}    

We can obtain the truncated SVD of $U_{\ell+s}$ by computing the thin SVD of the matrix $Y$. Specifically, let $Y = Q_Y \Sigma_Y R_Y^\top$ be the SVD of $Y$, and split $ R_Y $ into $ \left[\begin{array}{cc}
	& R_Y^{(1)} \\
	& R_Y^{(2)}
\end{array}\right]$. With this, we can update the SVD of $U_{\ell+s}$ as follows:
\begin{align*}
	Q\leftarrow QQ_Y,\quad \Sigma\leftarrow\Sigma_Y,\quad  R\leftarrow \left[\begin{array}{cc}
		& RR_Y^{(1)} \\
		& R_Y^{(2)}
	\end{array}\right] \in \mathbb{R}^{(\ell+s)\times \ell}.
\end{align*}

It is worth noting that the dimensions of the matrices $Q$ and $\Sigma$ remain unchanged, and we need to incrementally store the matrix $W = \left[ Q^\top u_{\ell+1}\mid\cdots\mid Q^\top u_{\ell+s}\right]$. As $W$ belongs to $\mathbb{R}^{k\times s}$ where $k\leq r$ is relatively small, the storage cost for this matrix  is low.   
\subsection*{Step 3: Update the SVD of $ U_{\ell+s+1} $ (No truncation)}
Next, we proceed with the update of the SVD for $U_{\ell+s+1}$. Firstly, we compute the residual vector of $u_{\ell+s+1}$ by projecting it onto the subspace spanned by the columns of $Q$, i.e.,
\begin{align}\label{residual}
	e=u_{\ell+s+1}-QQ^\top  u_{\ell+s+1}.
\end{align}

First, we define $p = |e|$. Then, based on \eqref{largerthantol}, we deduce that $p > \textup{\texttt{tol}}$. Finally, we denote $\widetilde{e}$ as $e/p$.  With these definitions, we establish the following fundamental identity:
\begin{align*}
	U_{\ell+s+1}&= \left[U_{\ell+s}\mid u_{\ell+s+1} \right] \\
	&\approx \left[ Q\Sigma R^\top \mid p\widetilde{e}+QQ^\top u_{\ell+s+1}\right] \\
	&\approx [Q \mid \widetilde{e}] \underbrace{\left[\begin{array}{cc}
			\Sigma & Q^{\top} u_{\ell+s+1} \\
			0 & p
		\end{array}\right]}_{\bar{Y}}\left[\begin{array}{cc}
		R & 0 \\
		0 & 1
	\end{array}\right]^{\top},
\end{align*}

Let $ \bar{Q}\bar{\Sigma}\bar{R}^\top  $  be the full SVD of $\bar Y$. Then  the SVD of $ U_{\ell+s+1} $ can be approximated by
\begin{align*}
	U_{\ell+s+1}  \approx (\left[ Q\mid \widetilde{e}\right]\bar{Q} )\bar{\Sigma} \left(\left[\begin{array}{cc}
		R & 0 \\
		0 & 1
	\end{array}\right]\bar{R}\right)^\top.
\end{align*}

With this, we can update the SVD of $U_{\ell+s+1}$ as follows:
\begin{align*}
	Q\leftarrow (\left[ Q\mid \widetilde{e}\right])\bar{Q}, \quad \Sigma\leftarrow\bar{\Sigma}, \quad  R\leftarrow \left[\begin{array}{cc}
		R & 0 \\
		0 & 1
	\end{array}\right]\bar{R}.
\end{align*}
It is worth noting that, in this case, the dimensions of the matrices $Q$ and $\Sigma$ increase.
\begin{remark}
	Theoretically, the residual vector $e$ in \eqref{residual} is orthogonal to the vectors in  the subspace spanned by the columns of $Q$. However, in practice, this orthogonality can be completely lost, a fact that has been confirmed by numerous numerical experiments \cite{MR4017489,MR3775096,MR3594691}.  In \cite{MR2167744}, Giraud et al. stressed that exactly two iteration-steps are enough to keep the orthogonality. To reduce computational costs, Zhang \cite{zhang2022answer} suggested using the two iteration steps only when the inner product between $e$ and the first column of $Q$   exceeds a certain tolerance. Drawing from our experience, it is imperative to calibrate this tolerance to align closely with the machine error. For instance, as demonstrated in this paper, we consistently establish this tolerance as $10^{-14}$.
\end{remark}
\subsection*{Step 4: Singular value  truncation}
For many  PDE data sets, they may have a large number of nonzero singular values but most of them are very small. Considering the computational cost involved in retaining all of these singular values, it becomes necessary to perform singular value truncation. This involves discarding the last few singular values if they fall below a certain tolerance threshold.
\begin{lemma} \textup{\cite[Lemma 5.1]{zhang2022answer}}
	Assume that $\Sigma=\operatorname{diag}\left(\sigma_1, \sigma_2, \ldots, \sigma_k\right)$ with $\sigma_1 \geq \sigma_2 \geq \ldots \geq \sigma_k$, and $\bar{\Sigma}=\operatorname{diag}\left(\mu_1, \mu_2, \ldots, \mu_{k+1}\right)$ with $\mu_1 \geq \mu_2 \geq \ldots \geq \mu_{k+1}$. Then we have
	\begin{align}
		\label{eq302}
		\mu_{k+1} &\le p, \\
		\label{eq303}
		\mu_{k+1} &\le \sigma_k \leq \mu_k \leq \sigma_{k-1} \leq \ldots \leq \sigma_1 \leq \mu_1.
	\end{align}
	
\end{lemma}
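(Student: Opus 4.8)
The plan is to read off both statements directly from the block structure of $\bar Y$, which the excerpt defines as the block upper triangular matrix with leading $k\times k$ block $\Sigma$, last column $\bigl(w^\top, p\bigr)^\top$ where $w = Q^\top u_{\ell+s+1}$, and zero lower-left block. Recall that $\mu_1\ge\cdots\ge\mu_{k+1}$ are exactly the singular values of $\bar Y$, so that the $\mu_i^2$ are the eigenvalues of the symmetric positive semidefinite matrix $\bar Y^\top\bar Y$, and similarly the $\sigma_i^2$ are the eigenvalues of $\Sigma^2$. The whole argument is then an exercise in applying the Cauchy interlacing theorem.

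First I would prove \eqref{eq303}. A direct multiplication gives
\[
\bar Y^\top\bar Y=\begin{bmatrix}\Sigma^2 & \Sigma w\\ w^\top\Sigma & w^\top w+p^2\end{bmatrix},
\]
so $\Sigma^2$ is precisely the $k\times k$ leading principal submatrix of the $(k+1)\times(k+1)$ symmetric matrix $\bar Y^\top\bar Y$ obtained by deleting its last row and column. The Cauchy interlacing theorem then yields
\[
\mu_1^2\ge\sigma_1^2\ge\mu_2^2\ge\sigma_2^2\ge\cdots\ge\mu_k^2\ge\sigma_k^2\ge\mu_{k+1}^2 .
\]
Since every quantity here is nonnegative, taking square roots preserves all the inequalities and produces exactly \eqref{eq303}.

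For \eqref{eq302} I would use the variational characterization of the smallest singular value. Because $\bar Y\in\mathbb{R}^{(k+1)\times(k+1)}$ and a matrix and its transpose share the same singular values,
\[
\mu_{k+1}=\sigma_{\min}(\bar Y)=\sigma_{\min}(\bar Y^\top)=\min_{|x|=1}|\bar Y^\top x| .
\]
Taking $x$ to be the $(k+1)$-st standard basis vector, $\bar Y^\top x$ is the transpose of the last row of $\bar Y$, namely $(0,\dots,0,p)^\top$, so $\mu_{k+1}\le|\bar Y^\top x|=p$; this requires no invertibility hypothesis. (Alternatively, \eqref{eq302} also follows from \eqref{eq303} together with the determinant identity $\mu_1\cdots\mu_{k+1}=|\det\bar Y|=\sigma_1\cdots\sigma_k\,p$: when $\sigma_k>0$ this gives $\mu_{k+1}=p\prod_{i=1}^{k}(\sigma_i/\mu_i)\le p$, and when $\sigma_k=0$ the bound $\mu_{k+1}\le\sigma_k=0\le p$ already contained in \eqref{eq303} suffices.)

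I do not expect a genuine obstacle: the argument is essentially bookkeeping once the correct object, $\bar Y^\top\bar Y$ with leading principal submatrix $\Sigma^2$, is identified. The only points that call for a little care are getting the direction of the interlacing right (which is why one looks at $\bar Y^\top\bar Y$ rather than at $\bar Y$ itself), justifying the passage to square roots by nonnegativity, and, for \eqref{eq302}, preferring the minimax characterization over the determinant identity so that the possibly rank-deficient case $\sigma_k=0$ is handled without a separate discussion.
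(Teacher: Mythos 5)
Your argument is correct. Note that the paper itself does not prove this lemma; it is quoted verbatim from \cite[Lemma 5.1]{zhang2022answer}, so there is no internal proof to compare against. Your route — identifying $\Sigma^2$ as the leading principal submatrix of $\bar Y^\top\bar Y$, applying Cauchy interlacing, and taking square roots for \eqref{eq303}, then bounding $\mu_{k+1}=\sigma_{\min}(\bar Y^\top)$ by the norm of the last row of $\bar Y$ for \eqref{eq302} — is the standard and essentially the same argument used in the cited reference, and your handling of the rank-deficient case via the minimax characterization rather than the determinant identity is the cleaner choice.
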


The inequality $\eqref{eq302}$ indicates that, regardless of the magnitude of $p$, the last singular value of $\bar{Y}$ can potentially be very small. This implies that the tolerance set for $p$ cannot prevent the algorithm from computing exceedingly small singular values. Consequently, an additional truncation is necessary when the data contains numerous very small singular values. Fortunately, inequality $\eqref{eq303}$ assures us that only the last singular value of $\bar{Y}$ has the possibility of being less than the tolerance. Therefore, it suffices to examine only the last singular value.
\begin{itemize}
	
	\item[ (i)] If $\Sigma_Y(k+1, k+1) \geq \mathtt{tol} $, then
	$$
	Q \longleftarrow[Q \mid \widetilde{e}] Q_Y, \quad \Sigma \longleftarrow \Sigma_Y, \quad R \longleftarrow\left[\begin{array}{cc}
		R & 0 \\
		0 & 1
	\end{array}\right] R_Y.
	$$
	
	\item[(ii)] If $\Sigma_Y(k+1, k+1)<\mathtt{tol}$, then
	$$
	Q \longleftarrow[Q \mid \widetilde{e}] Q_Y(:, 1: k),  \quad \Sigma \longleftarrow \Sigma_Y(1: k, 1: k), \quad R \longleftarrow\left[\begin{array}{cc}
		R & 0 \\
		0 & 1
	\end{array}\right] R_Y(:, 1: k).
	$$
\end{itemize}
It is essential to note that $p$-truncation and no-truncation  do not alter the previous data, whereas singular value truncation may potentially change the entire previous data. However, we can establish the following bound:
\begin{lemma}\label{error_SingularValueTruncation}
	Suppose $Q\Sigma R^\top$ to be the SVD of $A\in \mathbb R^{m\times n}$, where $\{\sigma_i\}_{i=1}^r$ are the positive singular values. Let $B = Q(:,1:r-1)\Sigma(1:r-1,1:r-1)(R(:,1:r-1))^\top$. We have:
	\begin{align*}
		\max\{|a_1 - b_1|, |a_2- b_2|, \ldots, |a_n-b_n|\} \le \sigma_r.
	\end{align*}
	Here, $a_i$ and $b_i$ correspond to the $i$-th columns present in matrices $A$ and $B$ respectively. The symbol $|\cdot|$ denotes the Euclidean norm within the realm of $\mathbb{R}^m$.
\end{lemma}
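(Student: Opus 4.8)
The plan is to bound the difference between the $i$-th columns of $A$ and $B$ directly via the SVD structure. First I would write $A = Q\Sigma R^\top$ and $B = Q(:,1{:}r{-}1)\Sigma(1{:}r{-}1,1{:}r{-}1)(R(:,1{:}r{-}1))^\top$, and observe that the difference is the rank-one matrix $A - B = \sigma_r\, q_r r_r^\top$, where $q_r = Q(:,r)$ and $r_r = R(:,r)$ are the $r$-th left and right singular vectors. This is the natural thing to check: since $\Sigma$ is diagonal, subtracting the truncated product simply removes the $r$-th singular triplet.

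Next, the $i$-th column of $A - B$ is $(A-B)e_i = \sigma_r\, q_r (r_r^\top e_i) = \sigma_r (r_r)_i\, q_r$, where $(r_r)_i$ denotes the $i$-th entry of the right singular vector $r_r$. Taking Euclidean norms in $\mathbb{R}^m$ and using $|q_r| = 1$ (since $Q$ has orthonormal columns), we get $|a_i - b_i| = \sigma_r |(r_r)_i|$. Then I would invoke $|(r_r)_i| \le |r_r| = 1$, since $r_r$ is a unit vector and each of its components is bounded in absolute value by its norm. This yields $|a_i - b_i| \le \sigma_r$ for every $i = 1, \ldots, n$, and taking the maximum over $i$ gives the claimed bound.

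There is no serious obstacle here — the argument is a short consequence of the SVD being a sum of rank-one terms $\sum_{j=1}^r \sigma_j q_j r_j^\top$ together with orthonormality of the singular vectors. The only point requiring a small amount of care is the bookkeeping with the submatrix/column notation introduced earlier in the excerpt (i.e.\ confirming that $B$ is exactly $A$ with the last singular triplet deleted, so that $A - B = \sigma_r q_r r_r^\top$ rather than something involving several triplets), and the elementary fact that a single coordinate of a unit vector has absolute value at most one. I would state these two observations explicitly and then the estimate follows in two lines.
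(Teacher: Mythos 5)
Your argument is correct: since $A=\sum_{j=1}^{r}\sigma_j q_j r_j^\top$ and $B=\sum_{j=1}^{r-1}\sigma_j q_j r_j^\top$, the difference is the rank-one matrix $\sigma_r q_r r_r^\top$, whose $i$-th column has norm $\sigma_r|(r_r)_i|\le\sigma_r$ by orthonormality of $q_r$ and $r_r$. The paper omits the proof of this lemma, describing it as straightforward, and your two-line argument is precisely the intended one, so there is nothing to compare beyond noting that you have supplied the missing details correctly.
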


The proof of \Cref{error_SingularValueTruncation} is straightforward, and thus we omit it here.  Moving forward, we will provide a summary of the aforementioned four steps in \Cref{alg202}.

\begin{algorithm}[h]
	\caption{(Update ISVD)}
	\label{alg202}
	\begin{algorithmic}[1]
		\REQUIRE  
		$ Q\in \mathbb{R}^{m\times k},\Sigma \in \mathbb{R}^{k\times k},R\in \mathbb{R}^{\ell\times k},\mathtt{tol}, W, Q_0,q,u_{\ell+1},$\\
		\STATE Set $ d=Q^\top u_{\ell+1};e=u_{\ell+1}-Qd;p=(e^\top e)^{1/2} ;$\\
		\IF{$p \ge \mathtt{tol}$}
		\IF{$q>0$}
		\STATE Set $ Y=\left[\Sigma\mid \mathtt{cell2mat}(W) \right]  $;  $ \left[Q_Y,\Sigma_Y,R_Y \right] =\mathtt{svd}(Y,{'\mathtt{econ}'}) $;
		\STATE Set $ Q_0=Q_0Q_Y,\Sigma=\Sigma_Y,R_1=R_{Y(1:k,:)},R_2=R_{Y(k+1:\mathtt{end},:)},R=\left[ \begin{array}{cc}
			RR_1\\
			R_2
		\end{array}\right]  $;  $ d=Q_Y^\top d; $
		\ENDIF
		\STATE Set $ Y=\left[ \begin{array}{cc}
			\Sigma & d\\
			0  &  p
		\end{array}\right]  $;
		$ \left[ Q_Y,\Sigma_Y,R_Y\right] =\mathtt{svd} (Y) $;   $ e=e/p; $
		\IF{$ \left| e^\top Q{(:,1)}\right|> 10^{-14}$ }
		\STATE $ e=e-Q(Q^\top e);p_1=(e^\top e)^{1/2};e=e/p_1 $;
		\ENDIF
		\STATE Set $ Q_0=\left[\begin{array}{cc}
			Q_0 &0\\
			0   &1
		\end{array} \right]Q_Y  $;
		
		\IF{$ \Sigma_Y(k+1, k+1)\ge \textup{\texttt{tol}}  $}  
		\STATE $Q=[Q\mid e]Q_0,\quad \Sigma=\Sigma_Y,\quad R=\left[\begin{array}{cc}
			R &0\\
			0   &1
		\end{array} \right]R_Y,\quad Q_0=I_{k+1};$  
		\ELSE
		\STATE
		$Q=[Q\mid e]Q_0(:,1:k),\Sigma=\Sigma_Y(1:k,1:k), R=\left[\begin{array}{cc}
			R &0\\
			0   &1
		\end{array} \right]R_Y(:,1:k),\quad Q_0=I_{k};$ 
		\ENDIF
		
		\STATE $ W=\left[ \right] ;q=0; $
		\ELSE
		\STATE $ q=q+1 $; $ W\left\lbrace q\right\rbrace =d $;
		\ENDIF
		\ENSURE $ Q,\Sigma, R,W,Q_0,q.$
	\end{algorithmic}
\end{algorithm}

\section{Incremental SVD method  for the integro-differential equation}\label{ISVD_IDeq}
This section focuses on the application of the incremental SVD algorithm to the  equation \eqref{integro-diff-Eq}.  Throughout this section,  we make the assumption that the solution data of equation  \eqref{integro-diff-Eq} exhibits approximate low rank. 

Our approach is to simultaneously solve the integro-differential equation and incrementally update the SVD of the solution. By doing so, we store the solutions at all time steps in the four matrices of the SVD. As a result, we are able to address the issue of data storage in solving the integro-differential equation \eqref{integro-diff-Eq}. 

Due to the errors that may arise during the data compression process and the potential alterations caused by singular value truncation to previous storage, it becomes necessary for us to modify the traditional scheme \eqref{discrete-eq0}. Below, we provide a brief discussion of our approach.

\begin{itemize}
	\item[(1)] Use the initial condition $u_h^0$ to compute the numerical solution at time step 1, which follows the traditional approach. However, we use $\widehat u_h^1$ to denote the numerical solution for consistency. Once we obtain $\widehat u_h^1$, we compress $\{u_h^0, \widehat u_h^1\}$ and denote the corresponding compressed data as $\{\widetilde u_h^{1,0}, \widetilde u_h^{1,1}\}$.
	
	\item[(2)] Use the compressed data $\{\widetilde u_h^{1,0}, \widetilde u_h^{1,1}\}$ to compute the numerical solution at time step 2, denoted by $\widehat u_h^2$. Once we obtain $\widehat u_h^2$, we compress $\{\widetilde u_h^{1,0}, \widetilde u_h^{1,1}\}$ and $\widehat u_h^2$, and denote the corresponding compressed data as $\{\widetilde u_h^{2,0}, \widetilde u_h^{2,1}, \widetilde u_h^{2,2}\}$.
	
	\item[(3)] At time step $i$, given the compressed data $\{\widetilde u_h^{i,j}\}_{j=0}^i$, we compute the numerical solution at time step $i+1$, denoted by $\widehat u_h^{i+1}$. We then compress $\{\widetilde u_h^{i,j}\}_{j=0}^i$ and $\widehat u_h^{i+1}$, and denote the corresponding compressed data as $\{\widetilde u_h^{i+1,j}\}_{j=0}^{i+1}$.
	
	\item[(4)] Continue the above process until we reach the final time step.
	
\end{itemize}

In summary, we apply our novel approach by incrementally compressing data at each time step to compute the numerical solutions throughout the process.

Based on the preceding discussion, we can present our formulation below, where we seek $\widehat u_h^{n} \in V_h$ that satisfies the following equation:
\begin{align}\label{ISVD_eq1}
	\left(\partial_t^+\widehat{u}_h^n,v_h\right)+\mathscr A(\bar{\widehat{u}}_h^{n},v_h)+\mathscr{B}(M^n,v_h)=(\bar f^{n},v_h) \quad \forall v_h\in V_h,
\end{align}
where
\begin{align*}
	M^n=\sum_{j=1}^{n-1}\Delta t_jK(\bar{t}_n-\bar{t}_j)\bar{\widetilde{u}}_h^{n-1,j}+\frac{\Delta t_n}{2}K(0)\bar{\widehat{u}}_h^n,\\
	\bar{\widetilde{u}}_h^{n-1,j}=\frac{\widetilde{u}_h^{n-1,j}+\widetilde{u}_h^{n-1,j-1}}{2},\qquad \bar{\widehat{u}}_h^{n}=\frac{\widehat{u}_h^n+\widehat{u}_h^{n-1}}{2}.
\end{align*}

Subsequently, we express equation \eqref{ISVD_eq1} into matrix form to highlight the benefits of utilizing the incremental SVD for solving the integro-differential equation more distinctly.  To do this, let $\widehat u_{i+1}$ and $\widetilde u_{i,j}$ denote the coefficient of $\widehat u_h^{i+1}$ and $\widetilde u_h^{i,j}$, respectively. Then we seek  a solution $\widehat u_{i+1}\in \mathbb R^m$ that satisfies the following equation:
\begin{align}\label{ISVD_eq2}
	\begin{split}
		&(M+\frac{\Delta t_n}{2}A+\frac{\Delta t_n}{2}(\bar{t}_n-t_{n-1})K(0)B)\widehat{u}_n\\
		&\quad=(M-\frac{\Delta t_n}{2}A-\frac{\Delta t_n}{2}(\bar{t}_n-t_{n-1})K(0)B)\widehat{u}_{n-1}\\
		&\qquad-\Delta t_nB\sum_{j=1}^{n-1}K(\bar{t}_n-\bar{t}_j)\Delta t_j\frac{\widetilde{u}_{n-1,j}+\widetilde{u}_{n-1,j-1}}{2}+\Delta t_nb_n.
	\end{split}
\end{align}
Here, $\{\widetilde u_{i,j}\}_{j=0}^{i}$  represents the data that has been compressed from $\{\widetilde u_{i-1, 0}, \ldots, \\ \widetilde u_{i-1, i-1}, \widehat u_i\}$ using the incremental SVD algorithm. We assume that $Q_i$, $\Sigma_i$, $R_i$, and $W_i$ are the matrices associated with this compression process. In other words,
\begin{align}\label{ISVD_eq3}
	[\widetilde u_{i-1,0}\mid \cdots\mid \widetilde u_{i-1,i-1}\mid \widehat u_i]  \stackrel{\text{ Compress }}{ \longrightarrow }  Q_i [\Sigma_iR_{i}\mid W_{i}]^\top = 	[\widetilde u_{i,0}\mid \cdots\mid \widetilde u_{i,i}].
\end{align}
Let $[\Sigma_iR_{i}\mid W_{i}]^\top$ be denoted as $X_i$.  Accordingly, equation \eqref{ISVD_eq2} can be rewritten as follows:
\begin{align}\label{ISVD_eq4}
	\begin{split}
		&(M+\frac{\Delta t_n}{2}A+\frac{\Delta t_n}{2}(\bar{t}_n-t_{n-1})K(0)B)\widehat{u}_n\\
		&\quad=(M-\frac{\Delta t_n}{2}A-\frac{\Delta t_n}{2}(\bar{t}_n-t_{n-1})K(0)B)\widehat{u}_{n-1}\\
		&\qquad-\Delta t_nBQ_i\sum_{j=1}^{n-1}K(\bar{t}_n-\bar{t}_j)\Delta t_j\frac{X(:,j+1)+X(:,j)}{2}+\Delta t_nb_n.\textbf{}
	\end{split}
\end{align}

Once $\widehat u_{i+1}$ is obtained, we update the SVD of $[\widetilde u_{i,0}\mid \cdots\mid \widetilde u_{i,i}\mid \widehat u_{i+1}]$ using $Q_{i}$, $\Sigma_{i}$, $R_{i}$, $W_{i}$, and $\widehat u_{i+1}$ based on the principles of the incremental SVD method. This update process is illustrated  in  \Cref{ISVD_figure}. The corresponding algorithm is shown in \cref{alg301}.
\begin{figure}[tbh]
	\centerline{
		\hbox{\includegraphics[width=4.3in]{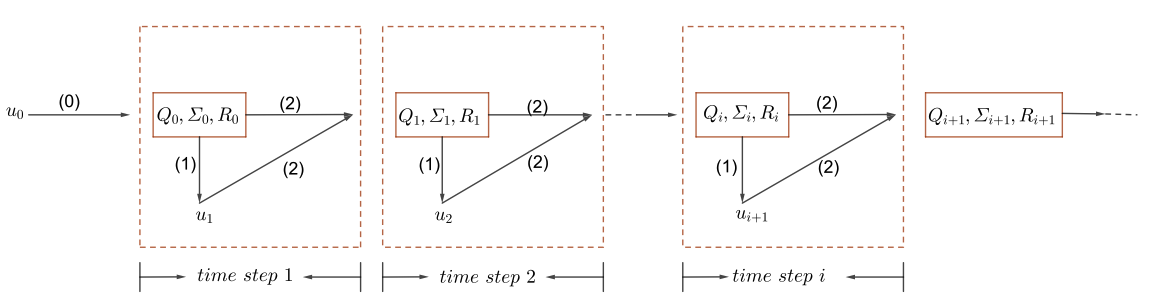}}}
	\caption{The process of using the incremental SVD to solve the integro-differential equation.}
	\label{ISVD_figure}
	\centering
\end{figure}	
Throughout the remainder of this section, we will examine the memory and computational cost pertaining to the history term in our novel approach. Our data storage involves four matrices: $Q_i$, $\Sigma_i$, $R_i$, and $W_i$, resulting in a memory cost of $\mathcal O((m+n)r)$, where $r$ represents the rank of the solution data. By taking into account our assumption that $r\ll \min\{m, n\}$, we can compare this memory cost with that of the traditional approach presented in \eqref{FEM_memory}, which illuminates a noteworthy reduction in our innovative method.

Moving on to the computational cost, which also encompasses the cost of the incremental SVD, it can be summarized as follows:
\begin{align*}
	\mathcal O(mnr) + \sum_{i=1}^n \sum_{j=0}^{i} \mathcal O(r) = \mathcal O(mnr + rn^2).
\end{align*}

Here, once again, $r$ represents the rank of the solution data. Based on our assumption that $r\ll \min\{m, n\}$, we can compare the computational cost in \eqref{FEM_computational} to that of the traditional approach, revealing that our approach experiences only linear growth, rather than quadratic growth as observed in the traditional approach.
\begin{algorithm}[h]
	\caption{(Incremental SVD for integro-differential equation)}
	\label{alg301}
	\begin{algorithmic}[1]
		\REQUIRE  $ \Delta t$,$N$, $M\in \mathbb{R}^{m\times m}$, $A\in \mathbb{R}^{m\times m}$,$B\in \mathbb{R}^{m\times m}$, $u_0$, $\texttt{tol},$\\
		\STATE Set $ W=\left[ \right]$; $Q_0=1$ $  \widehat u=u_0 $ ;\\
		\STATE $ \left[ Q,\Sigma,R\right] =\mathtt{InitializeISVD}(u_0) $; \hspace{6.1cm} \Cref{alg201}
		\FOR{$ n=1,\cdots,N$}
		\STATE $ A_1=M+\frac{\Delta t_n}{2}A +\frac{\Delta t_n}{2}(\bar{t_n}-t_{n-1})K(0)B$;
		\STATE $ A_2=M-\frac{\Delta t_n}{2}A -\frac{\Delta t_n}{2}(\bar{t_n}-t_{n-1})K(0)B $;
		\STATE Compute the coefficients $ K(\bar{t}_n-\bar{t}_j)$, $j=1,\ldots, n-1$,  and get the load vector   $ b_{n} $;
		\IF{$W$  is empty}
		\STATE $ \mathtt{MQ}=MQ$; $X=\Sigma R^\top  $;$ \mathtt{BQ}=BQ $;
		\STATE $\widetilde  b_{n}=A_2 \widehat u-\Delta t_n  \mathtt{BQ}  \sum_{j=1}^{n-1}\Delta t_jK(\bar{t}_n-\bar{t}_j)\frac{X(:,j)+X(:,j+1)}{2}+\Delta t_nb_{n} $;
		\STATE Solve $ A_1  \widehat u=\widetilde  b_{n} $;
		\STATE $ \left[Q,\Sigma,R,W,Q_0,q \right]=\mathtt{UpdateISVD}(Q,\Sigma,R,\mathtt{tol},W,Q_0,q,  \widehat u) $; \hspace{1cm} \Cref{alg202}
		\IF{$W$ is not empty}
		\STATE $ X=\Sigma R^\top  $;
		\ENDIF
		\ELSE
		\STATE  $ D=\left[X \quad \mathtt{cell2mat}(W) \right]  $;
		\STATE $\widetilde  b_{n}=A_2 \widehat u-\Delta t_n  \mathtt{BQ}  \sum_{j=1}^{n-1}\Delta t_jK(\bar{t}_n-\bar{t}_j)\frac{D(:,j)+D(:,j+1)}{2}+\Delta t_nb_{n} $;
		\STATE Solve $ \widetilde{A} \widehat u=\widetilde b_{i+1}$;
		\STATE $ \left[ Q,\Sigma,R,W,Q_0,q\right] =\mathtt{UpdateISVD}(Q,\Sigma,R,\mathtt{tol},W,Q_0,q, \widehat u) $; \hspace{1cm} \Cref{alg202}
		\ENDIF
		\ENDFOR
		\ENSURE $\widehat u.$
	\end{algorithmic}
\end{algorithm}

\section{Error estimate}\label{Error_estimate}
In this section, we derive the error between the solution of the scheme \eqref{ISVD_eq1} and the exact solution that satisfies the equation \eqref{integro-diff-Eq}.
\subsection{Assumptions and Main Result} We assume throughout that  $\Omega$ is a bounded convex polyhedral domain,    the data  of  \eqref{integro-diff-Eq}  satisfies the following condition:  
\begin{assumption}\label{A2} 
	Let $\|\psi\|_a^2= \mathscr A(\psi, \psi)$ for $\psi \in H_0^1(\Omega)$,  $\displaystyle K_0=\int_0^T K(t) {\ \rm d}t$ and $ K(t)\in H^2 [0,T]  $, the following inequality holds:
	\begin{align*}
		c_0 K_0 <1,
	\end{align*}
	where the constant $ c_0>0 $ satisfies
	\begin{align*}
		\left| \mathscr {B}(u,v)\right|\le c_0\left\|u \right\|_a \left\|v\right\|_a,\quad \forall  u,v\in H_0^1(\Omega). 
	\end{align*}
\end{assumption}

\begin{remark}
	We emphasize that Assumption \ref{A2} ensures the dominance of the operator $\mathscr A$ over the integral term. However, as elucidated further in the subsequent sections, our requirements necessitate solely $ K(t)\in H^2[0,T] $ instead of assuming Assumption \ref{A2}. This modified requirement holds if the kernel $ K(t) $ remains positive definite, and $\mathscr{B}$ adheres to nonnegative symmetric properties in \Cref{singularkernel}. Our assumption, in this context, proves to be more encompassing and simpler to verify in comparison to the assumptions outlined in \cite{MR1686149}, where the prerequisites are more stringent to achieve a sharp decay rate.
\end{remark}

Now, we state the main result of our paper.
\begin{theorem}\label{main_res}
	Let $u$ and $\widehat {u}_h^n$ represent the solutions of \eqref{integro-diff-Eq} and \eqref{ISVD_eq1}, respectively. Throughout the entire process of the incremental SVD algorithm, the tolerance \textup{\texttt{tol}} is uniformly applied to both $p$-truncation and singular value truncation. Assuming the validity of Assumption \ref{A2}, if $u \in H^2(0,T;H^{k+1}(\Omega)) $, then the subsequent error bound holds:
	\begin{align}\label{error_est}
		\left\| u(T)-\widehat {u}_h^N\right\|\le C(h^{k+1}+\Delta t^2)+( {T_{sv}}+1)\sqrt{T(1+\gamma^{-1})\sigma(A)}\mathtt{tol},
	\end{align}
	where $C$ and $ \gamma \in (0,2c_0^{-1}K_0^{-1}-2)  $ are two positive constants, independent of $h$, $\Delta t$, and \textup{\texttt{tol}}, and $\sigma(A)$ represents the spectral radius of the stiffness matrix $A$, $ T_{sv}$ signifies  the total number of times singular value truncation is applied.
\end{theorem}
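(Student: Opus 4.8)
The plan is to split the total error $u(T)-\widehat u_h^N$ via an intermediate quantity, namely the solution $u_h^N$ of the \emph{standard} fully discrete scheme \eqref{discrete-eq0}. We then write
\[
\| u(T)-\widehat u_h^N\| \le \| u(T)-u_h^N\| + \| u_h^N - \widehat u_h^N\|.
\]
The first term is controlled by the classical convergence theory for finite element/Crank--Nicolson discretizations of integro-differential equations: under Assumption \ref{A2} and $u\in H^2(0,T;H^{k+1}(\Omega))$, standard arguments (Ritz projection, energy estimates using the coercivity margin $c_0K_0<1$, and the midpoint-rule quadrature error for the memory term) give $\| u(T)-u_h^N\| \le C(h^{k+1}+\Delta t^2)$. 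I would treat this as essentially known and devote only a few lines to sketching the energy argument, emphasizing where $\gamma\in(0,2c_0^{-1}K_0^{-1}-2)$ enters as the slack in the Gr\"onwall-type estimate.

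The heart of the proof is the second term, $\| u_h^N - \widehat u_h^N\|$, which measures how the incremental-SVD compression perturbs the discrete solution. Let $\rho^n := u_h^n - \widehat u_h^n$ be the difference of the two discrete trajectories, and let $\eta^{n,j} := \widehat u_h^{n,j} - \widetilde u_h^{n,j}$ denote the compression error injected into the history at step $n$ (so by \Cref{error_SingularValueTruncation} and the $p$-truncation bound \eqref{lessthantol}, each column satisfies $\|\eta^{n,j}\|\le \mathtt{tol}$, with the stronger observation that only singular-value truncations — of which there are $T_{sv}$ — can retroactively alter \emph{all} previously stored columns, each by at most $\mathtt{tol}$). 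Subtracting \eqref{ISVD_eq1} from \eqref{discrete-eq0} and writing the memory term of \eqref{ISVD_eq1} as the "exact" discrete memory term plus a perturbation built from the $\eta$'s, $\rho^n$ satisfies the same scheme as \eqref{discrete-eq0} but with zero initial data and a forcing that is a weighted combination of the compression errors: schematically,
\[
(\partial_t^+\rho^n,v_h) + \mathscr A(\bar\rho^n,v_h) + \int\!\!-\text{type memory in } \rho = \mathscr B\!\Big(\textstyle\sum_j \Delta t_j K(\bar t_n-\bar t_j)\,\bar\eta^{n-1,j},\,v_h\Big).
\]
I would then run the \emph{same} energy estimate as for the consistency error, testing with $v_h = \bar\rho^n$ (or $\partial_t^+\rho^n$), using Assumption \ref{A2} to absorb the memory term on the left, and bounding the right-hand side via $|\mathscr B(w,v)|\le c_0\|w\|_a\|v\|_a$ together with the norm equivalence $\|w\|_a^2\le \sigma(A)\|w\|^2$ (this is exactly where $\sigma(A)$ and the factor $(1+\gamma^{-1})$ in \eqref{error_est} come from — the $\gamma^{-1}$ being the Young's-inequality constant used to split the $\mathscr B$ term). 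Summing over $n$ and using that the number of nonzero $\eta$ columns affecting step $N$ is at most the number of truncations plus one, together with $\sum_j \Delta t_j K(\bar t_n-\bar t_j) \le K_0 + O(\Delta t) \le C$, yields
\[
\| u_h^N-\widehat u_h^N\| \le (T_{sv}+1)\sqrt{T(1+\gamma^{-1})\sigma(A)}\;\mathtt{tol}.
\]
Adding the two bounds gives \eqref{error_est}.

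The main obstacle I anticipate is bookkeeping the compression error correctly through the \emph{history} term rather than just the current step. The subtlety is that $\widetilde u_h^{n-1,j}$ for a fixed $j$ changes as $n$ increases (each singular-value truncation rewrites the stored columns), so one cannot simply define a single per-step error; one must argue that the \emph{cumulative} distortion of any given stored column is bounded by $T_{sv}\cdot\mathtt{tol}$ and that these distortions enter the energy estimate additively with weights summing to something $O(\sqrt T)$. Getting the constant to come out as exactly $(T_{sv}+1)\sqrt{T(1+\gamma^{-1})\sigma(A)}$ — rather than something with $T_{sv}^2$ or an extra $K_0$ — requires care in the order of summation and in which norm ($\|\cdot\|$ vs. $\|\cdot\|_a$) the telescoping is done. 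A secondary technical point is justifying the discrete Gr\"onwall step for the memory-perturbed scheme when $K$ is merely in $H^2[0,T]$ rather than positive definite; here the slack parameter $\gamma$ and Assumption \ref{A2} must be used to guarantee the left-hand side stays coercive after the memory term is moved over, mirroring the treatment promised for the positive-definite case in \Cref{singularkernel}.
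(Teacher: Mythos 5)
Your proposal is correct and follows essentially the same route as the paper: a triangle inequality through the standard fully discrete solution $u_h^N$, with the first term handled by the classical FEM/Crank--Nicolson convergence argument (the paper's \Cref{fem_error}), the second by an energy estimate on the difference of the two discrete schemes with the compression error as forcing (the paper's \Cref{error_uh_uhat_another}), and the cumulative per-column distortion bounded by $(T_{sv}+1)\,\mathtt{tol}$ via a telescoping sum over the singular-value truncations converted to the $\|\cdot\|_a$ norm through $\sigma(A)$ (the paper's \Cref{maxmax}). The bookkeeping subtleties you flag are exactly the ones the paper resolves, and the constants arise exactly where you predict.
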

\begin{remark}
	In \Cref{main_res}, if $ K(t)\in H^2[0, T]$  is positive definite and $ \mathcal{B} $ is a symmetric definite elliptic operator, without  Assumption \ref{A2}, \eqref{error_est} still holds.
\end{remark}
\subsection{Proof of \Cref{main_res}}
We begin by giving an error bound between the solution of the standard finite element method given by equation \eqref{discrete-eq0} and the solution of the Non-Fickian model \eqref{integro-diff-Eq}. Additionally, we derive an error bound between the solution of the standard finite element method \eqref{discrete-eq0} and our novel scheme \eqref{ISVD_eq1}. By applying the triangle inequality, we obtain a straightforward error bound between the solution of the Non-Fickian model \eqref{integro-diff-Eq} and our novel scheme \eqref{ISVD_eq1}.

\begin{lemma}\label{fem_error}
	Let $ u $ and $ u_h $ represent the solutions of $ \eqref{integro-diff-Eq} $ and $ \eqref{semi-discretization} $, respectively. Assuming the validity of  Assumption \ref{A2}, alongside $ \| u_0-u_h^0\|\le Ch^{k+1}\left\| u_0\right\|_{k+1} $, if   \eqref{time-quasi-uniform}   holds and $u \in H^2(0,T;H^{k+1}(\Omega))$, then there exists a constant $ C>0 $, independent of $ h $ and $ \displaystyle \Delta t=\max_{1 \leq n \leq N}\Delta t_n $, such that:
	\begin{align}\label{eq4001}
		\left\| u(t_n)-u_h^n\right\|&\le C(h^{k+1}+\Delta t^2).
	\end{align}
\end{lemma}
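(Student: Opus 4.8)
\textbf{Proof proposal for \Cref{fem_error}.}

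The plan is to split the error via a Ritz-type projection and estimate the projection error and the projection-of-the-error separately, following the classical Wheeler-type argument adapted to integro-differential equations. First I would introduce the elliptic (Ritz) projection $P_h u(t) \in V_h$ defined by $\mathscr A(P_h u - u, v_h) = 0$ for all $v_h \in V_h$, and recall the standard approximation estimates $\|u - P_h u\| + h\|\nabla(u-P_h u)\| \le C h^{k+1}\|u\|_{k+1}$, together with the analogous bound for $u_t - (P_h u)_t$, valid since $\Omega$ is convex polyhedral and we have the regularity $u \in H^2(0,T;H^{k+1}(\Omega))$. Write $u(t_n) - u_h^n = (u(t_n) - P_h u(t_n)) + (P_h u(t_n) - u_h^n) =: \rho^n + \theta^n$. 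The term $\rho^n$ is controlled immediately by the projection estimate, so the work is all in bounding $\theta^n \in V_h$.

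Next I would derive the error equation for $\theta^n$ by subtracting the fully discrete scheme \eqref{discrete-eq0} from the weak form of \eqref{integro-diff-Eq} evaluated appropriately, testing against $v_h = \bar\theta^n = (\theta^n + \theta^{n-1})/2$ (or against $\partial_t^+\theta^n$ where the memory term permits). This produces, on the left, the energy quantities $(\partial_t^+\theta^n, \bar\theta^n) = \tfrac{1}{2\Delta t_n}(\|\theta^n\|^2 - \|\theta^{n-1}\|^2)$ and $\mathscr A(\bar\theta^n,\bar\theta^n) = \|\bar\theta^n\|_a^2$; on the right, several consistency terms: (i) the time-truncation error of the Crank–Nicolson difference quotient and the midpoint value, which is $\mathcal O(\Delta t^2)$ in the appropriate norm using $u \in H^2$ in time (here one genuinely needs the quasi-uniformity \eqref{time-quasi-uniform} to sum local $\Delta t_n^2$-type remainders against $\sum \Delta t_n = T$); (ii) the quadrature error of the midpoint/rectangle rule applied to the memory integral $\int_0^{t_n} K(t_n - s)\mathscr B(u(s),\cdot)\,ds$, again $\mathcal O(\Delta t^2)$ using $K \in H^2[0,T]$ and the time regularity of $u$; (iii) the term $\rho$ and its history, handled by the projection estimate and by the boundedness of $\mathscr B$; and (iv) the memory term in $\theta$ itself, namely $\mathscr B$ applied to $\sum_{j} K(\bar t_n - \bar t_j)\bar\theta^j \Delta t_j$, tested against $\bar\theta^n$. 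I would bound (iv) using $|\mathscr B(w,v)| \le c_0\|w\|_a\|v\|_a$ from Assumption \ref{A2}, giving a contribution of the form $c_0 \sum_j \Delta t_j |K(\bar t_n-\bar t_j)| \|\bar\theta^j\|_a \cdot \|\bar\theta^n\|_a$, and absorb part of it into the $\|\bar\theta^n\|_a^2$ term on the left (this is exactly where $c_0 K_0 < 1$ buys a strictly positive remaining coercivity).

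Then I would sum the resulting discrete inequality over $n = 1,\dots,N$, telescoping the $\|\theta^n\|^2$ terms, move the retained coercive $\|\bar\theta^n\|_a^2$ contributions to the left, and arrive at
\begin{align*}
	\|\theta^N\|^2 + \gamma \sum_{n=1}^N \Delta t_n \|\bar\theta^n\|_a^2 \le \|\theta^0\|^2 + C(h^{k+1} + \Delta t^2)^2 + C\sum_{n=1}^{N} \Delta t_n \sum_{j=1}^{n} \|\theta^j\|^2,
\end{align*}
where the double sum collects the not-yet-absorbed history contributions (bounded via $\|\bar\theta^j\|_a$ being controlled, or simply via $\|\cdot\|$ after using boundedness of the memory kernel and Young's inequality). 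A discrete Gronwall inequality — the Gronwall lemma adapted to the double-sum memory structure, which is standard for these equations — then yields $\|\theta^N\| \le C(h^{k+1} + \Delta t^2)$, using $\|\theta^0\| \le \|u_0 - u_h^0\| + \|u_0 - P_h u_0\| \le Ch^{k+1}\|u_0\|_{k+1}$. Combining with the bound on $\rho^N$ via the triangle inequality gives \eqref{eq4001}.

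The main obstacle I anticipate is the careful treatment of the memory term: both controlling the quadrature consistency error uniformly in $n$ (which forces the $K \in H^2[0,T]$ hypothesis and the quasi-uniform mesh so that the per-step $\mathcal O(\Delta t_n^2)$ errors sum to $\mathcal O(\Delta t^2)$ rather than $\mathcal O(\Delta t)$), and setting up the absorption so that a strictly positive coercivity constant $\gamma$ survives — this is precisely the role of Assumption \ref{A2}, and it is the step that must be done with the right choice of test function and the right splitting of the $\mathscr B$-history term. The subsequent discrete Gronwall argument with the double summation is routine but must be stated in a form that tolerates the history dependence.
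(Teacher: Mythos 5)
Your overall architecture (split $u(t_n)-u_h^n=\rho^n+\theta^n$, energy argument with test function $\bar\theta^n$, consistency estimates of order $\Delta t^2$, absorption of the $\mathscr B$-history term via $c_0K_0<1$) matches the paper's proof. However, there is one genuine gap: your choice of the \emph{plain elliptic Ritz projection} $P_h$ defined by $\mathscr A(P_hu-u,v_h)=0$. With that choice the error equation retains the un-cancelled term $\mathscr B(\bar\rho_K^{\Delta t,n},\bar\theta_n)$, i.e.\ the quadrature of the memory integral applied to $\rho=u-P_hu$. The only available bound is $|\mathscr B(\bar\rho_j,\bar\theta_n)|\le c_0\|\bar\rho_j\|_a\|\bar\theta_n\|_a$, and since $\|\rho\|_a$ is controlled only by $Ch^{k}\|u\|_{k+1}$ (one order worse than the $L^2$ estimate), after Young's inequality and summation you obtain $\|\theta^N\|\le C(h^{k}+\Delta t^2)$, losing a full power of $h$. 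This is precisely the classical obstruction for parabolic integro-differential equations, and it is why the paper instead uses the \emph{Ritz--Volterra projection} $R_hu$ satisfying $\mathscr A(u-R_hu,v_h)+\int_0^tK(t-s)\mathscr B(u(s)-R_hu(s),v_h)\,{\rm d}s=0$: with that definition the combination $-\mathscr A(\bar\rho_n,\cdot)-\mathscr B(\bar\rho_K^{\Delta t,n},\cdot)$ collapses to pure midpoint-quadrature errors (the terms $R_8+R_9$ in the paper), which are $O(\Delta t^2)$, while the remaining $\rho$-contribution enters only through $\int_0^T\|\rho_t\|^2\,{\rm d}t=O(h^{2(k+1)})$ via the cited projection estimate (the paper's \Cref{lemma505}). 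Without switching to this projection (or supplying a substitute duality/negative-norm argument for $\mathscr B(\rho,\theta)$), your proof does not reach the claimed rate \eqref{eq4001}.

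A secondary, more minor point: after absorbing the $\theta$-history term using $c_0K_0<1$, no discrete Gronwall inequality is needed — the paper absorbs \emph{all} energy-norm history contributions into the coercive sum $\sum_n\Delta t_n\|\bar\theta_n\|_a^2$ on the left. The residual double sum $C\sum_n\Delta t_n\sum_j\|\theta^j\|^2$ in your displayed inequality does not arise naturally: the history terms live in the $\|\cdot\|_a$ norm attached to $\bar\theta^j$, not in $L^2$, so they cannot simply be demoted to $\|\theta^j\|^2$ and handed to Gronwall. Either absorb them entirely (as the paper does) or keep them in the energy norm and argue accordingly.
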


Following this, we will provide a concise proof for \Cref{fem_error}. To facilitate this, we will first present some fundamental formulas crucial for subsequent error estimations, employing integration by parts extensively.
\begin{lemma}\textup{\cite[Lemma 4.2]{GuoYingwen2022Ceaf}}\label{integration_by_part}
	Employing integration by parts, it follows that for all $\varphi \in H^2(0,T)$, the following relations hold:
	\begin{align*}
		& \Delta t_n\bar{\varphi}\left(t_n\right)- \int_{t_{n-1}}^{t_n} \varphi(t) d t=\frac{1}{2 } \int_{t_{n-1}}^{t_n}\left(t-t_{n-1}\right)\left(t_n-t\right) \varphi_{t t}(t) d t, \\
		& \bar{\varphi}\left(t_n\right)-\varphi\left(\bar{t}_n\right)=\frac{1}{2} \int_{t_{n-1}}^{\bar{t}_n}\left(t-t_{n-1}\right) \varphi_{t t} d t+\frac{1}{2} \int_{\bar{t}_n}^{t_n}\left(t_n-t\right) \varphi_{t t} d t, \\
		& \frac{\Delta t_n\varphi\left(\bar{t}_n\right)}{2}- \int_{t_{n-1}}^{\bar t_n} \varphi(t) d t= \int_{t_{n-1}}^{\bar t_n}\left(t-t_{n-1}\right) \varphi_t(t) d t.
	\end{align*}
	
\end{lemma}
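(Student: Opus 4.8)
The plan is to establish each of the three identities independently by integration by parts, justified by the one-dimensional Sobolev embedding $H^2(0,T)\hookrightarrow C^1[0,T]$, which guarantees that $\varphi$ and $\varphi_t$ are continuous so that all pointwise evaluations $\varphi(t_n)$, $\varphi_t(\bar t_n)$, etc.\ are well defined and the boundary terms below are meaningful. Each identity reduces to a standard quadrature-error representation, so no external results are needed beyond the integration-by-parts formula for Sobolev functions.

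For the first identity, I would start from the right-hand side and introduce the weight $w(t)=(t-t_{n-1})(t_n-t)$, recording the three facts $w(t_{n-1})=w(t_n)=0$, $w'(t)=t_n+t_{n-1}-2t$ (so that $w'(t_{n-1})=\Delta t_n$ and $w'(t_n)=-\Delta t_n$), and $w''(t)\equiv-2$. Integrating $\tfrac12\int_{t_{n-1}}^{t_n} w\,\varphi_{tt}$ by parts once annihilates the boundary contribution because $w$ vanishes at both endpoints; integrating by parts a second time produces the boundary term $\tfrac{\Delta t_n}{2}\big(\varphi(t_n)+\varphi(t_{n-1})\big)$ coming from $w'$ together with $-\int_{t_{n-1}}^{t_n}\varphi$ coming from $w''=-2$. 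Recognizing $\tfrac{\Delta t_n}{2}\big(\varphi(t_n)+\varphi(t_{n-1})\big)=\Delta t_n\,\bar\varphi(t_n)$ yields the stated equality. For the third identity, a single integration by parts of the right-hand side with $u=t-t_{n-1}$ and $dv=\varphi_t\,dt$ gives the boundary term $(\bar t_n-t_{n-1})\varphi(\bar t_n)$ and the integral $-\int_{t_{n-1}}^{\bar t_n}\varphi$; using $\bar t_n-t_{n-1}=\Delta t_n/2$ reproduces the left-hand side exactly.

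The second identity is the midpoint-type error, which I would handle by treating the two integrals on the right separately, integrating each once by parts (with weights $t-t_{n-1}$ on $[t_{n-1},\bar t_n]$ and $t_n-t$ on $[\bar t_n,t_n]$, respectively). The terms carrying $\varphi_t(\bar t_n)$ cancel because the two weights contribute $+\tfrac{\Delta t_n}{4}$ and $-\tfrac{\Delta t_n}{4}$ at the common interior point $\bar t_n$, and the remaining boundary contributions collapse to $\tfrac12\big(\varphi(t_{n-1})+\varphi(t_n)\big)-\varphi(\bar t_n)=\bar\varphi(t_n)-\varphi(\bar t_n)$, as required.

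Since each identity reduces to elementary integration by parts, there is no deep obstacle; the only points requiring care are (i) justifying the integration-by-parts formula for $H^2$ functions rather than classical ones, which I would dispatch by density of smooth functions in $H^2(0,T)$ together with the continuity of point evaluations under the embedding $H^2\hookrightarrow C^1$, and (ii) the bookkeeping of boundary terms — specifically the cancellation of the $\varphi_t(\bar t_n)$ contributions in the second identity and the exploitation of $w(t_{n-1})=w(t_n)=0$ in the first — which are precisely the structural features that make the right-hand sides reduce to the claimed quadrature errors.
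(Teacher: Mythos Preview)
Your proposal is correct and follows exactly the route the lemma itself advertises (``employing integration by parts''); the paper does not supply its own proof but simply cites \cite[Lemma 4.2]{GuoYingwen2022Ceaf}, and your direct verification via one or two integrations by parts on each identity is the standard argument behind that citation. The bookkeeping you outline---vanishing of $w$ at the endpoints for the first identity, cancellation of the $\varphi_t(\bar t_n)$ contributions for the second, and a single integration by parts for the third---is accurate and complete.
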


To prove  \Cref{fem_error}, we introduce the well-known Ritz-Volterra projection $ R_hu $ of the solution $u$ defined  as follows:
\begin{align*}
	\mathscr{A}(u-R_hu,v_h)+\int_0^tK(t-s)\mathscr{B}(u(s)-R_hu(s),v_h)\ {\rm d}s=0,\ \forall v_h\in V_h.
\end{align*}
The following lemma shows an approximation result   .
\begin{lemma}\textup{\cite[Lemma 3.3]{MR1686149}}\label{lemma505}
	Under the   Assumption \ref{A2}, there exists a constant $C_0>0$, independent of $h$ and time $t$ such that for $\rho=u-R_h u$,
	\begin{align*}
		\|\rho_t(t)\| \leq C_0 h^{k+1}\{|\!|\!| u_t(t)|\!|\!|_{k+1,R}+(R(t)+\int_0^tR(t-s)R(s)\ {\rm d}s)\|u_0\|_{k+1}\},
	\end{align*}
	where
	\begin{align*}
		|\!|\!| u_t|\!|\!|_{k+1, R}= & \|u_t(t)\|_{k+1}+\int_0^t R(t-s)\|u_t(s)\|_{k+1} \ \rm{d} s\\ 
		& +\int_0^t R(t-s)\left(\int_0^s R(s-\tau)\|u_t(\tau)\|_{k+1} \ \rm{d} \tau\right) \ \rm{d} s,
	\end{align*}
	and $R(t)$ is the resolvent of $K(t)$ and is defined by
	\begin{align*}
		R(t)=K(t)+\int_0^t K(t-s) R(s) \ {\rm d} s, \quad t \geq 0.
	\end{align*}
\end{lemma}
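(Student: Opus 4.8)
The plan is to recognize this as the standard optimal-order $L^2$ estimate for the time derivative of the Ritz--Volterra projection and to reduce it to well-understood properties of the ordinary elliptic (Ritz) projection together with the scalar resolvent $R$. First I would introduce the elliptic projection $P_h u \in V_h$ defined by $\mathscr A(u - P_h u, v_h) = 0$ for all $v_h \in V_h$, and recall its classical approximation properties: since $P_h$ is a time-independent linear operator it commutes with $\partial_t$, so that $\|u - P_h u\| + \|(u - P_h u)_t\| \le C h^{k+1}(\|u\|_{k+1} + \|u_t\|_{k+1})$ in $L^2$ and an analogous $O(h^k)$ bound in the energy norm $\|\cdot\|_a$; the $L^2$ rate here already exploits the $H^2$-elliptic regularity afforded by the convexity of $\Omega$. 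Writing $\rho = u - R_h u$, $\eta = u - P_h u$ and $\psi = R_h u - P_h u \in V_h$, so that $\rho = \eta - \psi$, I would subtract the two projection identities and use $\mathscr A(\eta, v_h) = 0$ to obtain the finite-dimensional Volterra equation
\[ \mathscr A(\psi(t), v_h) = \int_0^t K(t-s)\,\mathscr B\big(\eta(s) - \psi(s), v_h\big)\,{\rm d}s, \qquad \forall v_h \in V_h. \]
Because $\|\eta_t\|$ is already controlled directly by the elliptic estimate, the whole problem collapses to bounding $\|\psi_t(t)\|$ in $L^2$ at the optimal rate.

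To handle $\psi$ I would introduce the bounded solution operator $G_h = \mathscr A_h^{-1}\tilde{\mathscr B}_h$ on $V_h$, defined by $\mathscr A(G_h w, v_h) = \mathscr B(w, v_h)$, which by Assumption \ref{A2} satisfies $\|G_h w\|_a \le c_0\|w\|_a$ (test with $v_h = G_h w$). This rewrites the identity as the operator Volterra equation $\psi + K*(G_h\psi) = K*(G_h\eta)$, with convolution in time. Since the convolution operator has norm at most $c_0 K_0 < 1$, its Neumann/resolvent series converges uniformly on $[0,T]$, and $\psi$ can be represented through convolutions against the scalar resolvent $R$ of $K$; this is precisely the mechanism that generates the nested integrals $\int_0^t R(t-s)(\cdots)\,{\rm d}s$ and $\int_0^t R(t-s)\int_0^s R(s-\tau)(\cdots)\,{\rm d}\tau\,{\rm d}s$ appearing in $|\!|\!|u_t|\!|\!|_{k+1,R}$. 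The delicate point is that a naive energy estimate of this identity only yields the suboptimal $h^k$ rate, because $\mathscr B(\eta,\cdot)$ pairs against $\|\eta\|_a = O(h^k)$. To recover the optimal $h^{k+1}$, I would run an Aubin--Nitsche duality argument adapted to the Volterra structure: for fixed $t$ and datum $g$, introduce the backward dual (Volterra-type) auxiliary problem whose solution inherits $H^2$-regularity from the convexity of $\Omega$, pair $(\psi(t),g)$ against it, and use the $\mathscr A$-orthogonality of $\eta$ to a finite-element approximation of the dual solution to gain the extra power of $h$.

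Finally I would pass from $\psi$ to $\psi_t$ by differentiating the Volterra identity in $t$; by Leibniz's rule this produces the boundary contribution $K(0)\mathscr B(\eta - \psi,\cdot)$ together with $\int_0^t K_t(t-s)\mathscr B(\eta - \psi,\cdot)\,{\rm d}s$, which is where the hypothesis $K \in H^2[0,T]$ enters to keep $K(0)$ and $K_t$ bounded. Splitting $\eta(s) = \eta(0) + \int_0^s \eta_t(\tau)\,{\rm d}\tau$ separates the right-hand side into an initial-data part, which after one and two resolvent convolutions yields exactly the $\big(R(t) + \int_0^t R(t-s)R(s)\,{\rm d}s\big)\|u_0\|_{k+1}$ term, and a part driven by $\eta_t$, which after the resolvent representation produces the triple-norm $|\!|\!|u_t|\!|\!|_{k+1,R}$; collecting all pieces with the elliptic bounds $\|\eta\|,\|\eta_t\| \le C h^{k+1}(\cdots)$ gives the stated estimate with a constant independent of $t$ thanks to $c_0 K_0 < 1$.

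I expect the main obstacle to be the second and third steps in combination: obtaining the optimal $L^2$ rate for the derivative $\rho_t$ rather than the energy-norm rate. This requires simultaneously (i) the Volterra-adapted duality argument, which is more intricate than for a plain elliptic projection because the dual problem is itself a backward Volterra problem, (ii) careful bookkeeping of the operator Neumann series so that every constant stays uniform in $t$ (precisely where $c_0 K_0 < 1$ is indispensable), and (iii) the initial-value splitting that converts the resolvent convolutions into the exact $|\!|\!|u_t|\!|\!|_{k+1,R}$ and $\|u_0\|_{k+1}$ structure stated in the lemma. By comparison, the routine elliptic estimates and the differentiation step are straightforward.
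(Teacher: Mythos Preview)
The paper does not prove this lemma at all: it is quoted verbatim as \cite[Lemma 3.3]{MR1686149} and used as a black box in the proof of Lemma~\ref{fem_error}. There is therefore no ``paper's own proof'' to compare against.

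That said, your outline is essentially the classical route by which such Ritz--Volterra projection estimates are established in the cited literature (the elliptic-projection splitting $\rho=\eta-\psi$, the operator Volterra equation for $\psi$, the resolvent/Neumann representation producing the nested $R$-convolutions, differentiation in $t$, and an Aubin--Nitsche duality to lift from $O(h^k)$ to $O(h^{k+1})$ in $L^2$). The structure and the identification of the duality step as the crux are both correct. One minor caution: the condition $c_0K_0<1$ from Assumption~\ref{A2} is what the present paper uses, but the original reference works under somewhat different hypotheses and obtains the resolvent $R$ directly from the scalar kernel $K$ rather than via a contraction-mapping Neumann series on the operator level; your sketch conflates these two mechanisms slightly, though either yields the same resolvent structure in the estimate.
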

For the sake of simplicity in notation, we will henceforth use the following symbols: for all $ n=1,2,\cdots,N $,
\begin{align*}
	&\bar{t}_n=\frac{t_{n-1}+t_{n}}{2},\qquad u^n=u(t_n),\ \bar{u}^n=\frac{u^n+u^{n-1}}{2},\\
	&F(t)=\int_0^tK(t-s)u(s)\ {\rm d}s,\qquad  e_n=u^n-u_h^n,\\
	&\rho_n=u^n-R_hu^n,\qquad \theta_n=R_hu^n-u_h^n,\\
	&\bar{e}_{K}^{\Delta t,n}=\sum_{j=1}^{n-1}\Delta t_jK(\bar{t}_n-\bar{t}_j)\bar{e}_j+\frac{\Delta t_n}{2}K(0)\bar{e}_n,\\
	&\bar{\rho}_{K}^{\Delta t,n}=\sum_{j=1}^{n-1}\Delta t_jK(\bar{t}_n-\bar{t}_j)\bar{\rho}_j+\frac{\Delta t_n}{2}K(0)\bar{\rho}_n,\\
	&\bar{\theta}_{K}^{\Delta t,n}=\sum_{j=1}^{n-1}\Delta t_jK(\bar{t}_n-\bar{t}_j)\bar{\theta}_j+\frac{\Delta t_n}{2}K(0)\bar{\theta}_n.
\end{align*}
Next, we give a proof of \Cref{fem_error}.
\begin{proof}
	Since $ u(t) $ and $ u_h^n $ satisfy the following equations:
	\begin{align}
		\label{varational-formula}
		(u_t,v_h)+\mathscr A(u,v_h)+\int_0^t K(t-s) \mathscr B(u(s),v_h)\ {\rm d}s&=(f(x,t),v_h),\\
		\label{fully-dis-eq}
		\left(\frac{u_h^{n}-u_h^{n-1}}{\Delta t_n},v_h\right)+\mathscr A(\bar u_h^{n},v_h)+\mathscr{B}(\bar{u}_{K,h}^{\Delta t,n},v_h)&=(\bar f^n,v_h).
	\end{align}
	We integrate $ \eqref{varational-formula} $ from $ [t_{n-1},t_n] $ and subtract $ \eqref{fully-dis-eq} $ from $ \eqref{varational-formula} $ to obtain 
	\begin{align*}
		&(\frac{e_n-e_{n-1}}{\Delta t_n},v_h)+\mathscr A(\bar{e}_n,v_h)+\mathscr B(\bar{e}_{K}^{\Delta t,n},v_h)\\
		&=\frac{1}{\Delta t_n}\int_{t_{n-1}}^{t_n}(f(t),v_h) \ {\rm d} t-(\bar{f}^n,v_h)
		+\mathscr A(\bar{u}^n,v_h)\\
		&\quad -\frac{1}{\Delta t_n}\int_{t_{n-1}}^{t_n}\mathscr A(u(t),v_h) \ {\rm d} t +\mathscr B(\bar{u}_K^{\Delta t,n},v_h)\\
		&\quad -\frac{1}{\Delta t_n}\int_{t_{n-1}}^{t_n}\int_{0}^{t}K(t-s)\mathscr B(u(s),v_h)\ {\rm d}s{\rm d}t.
	\end{align*}
	Observing that $ e_n=\rho_n+\theta_n $, we deduce that
	\begin{align}\label{error-eq}
		\begin{split}
			&(\frac{\theta_n-\theta_{n-1}}{\Delta t_n},v_h)+\mathscr A(\bar{\theta}_n,v_h)+\mathscr B(\bar{\theta}_{K}^{\Delta t,n},v_h)\\
			&=\frac{1}{\Delta t_n}\int_{t_{n-1}}^{t_n}(f(t),v_h)\ {\rm d}t-(\bar{f}^n,v_h)
			+\mathscr A(\bar{u}^n,v_h)\\
			&\quad -\frac{1}{\Delta t_n}\int_{t_{n-1}}^{t_n}\mathscr A(u(t),v_h)\ {\rm d}t  +\mathscr B(\bar{u}_K^{\Delta t,n},v_h)\\
			&\quad -\frac{1}{\Delta t_n}\int_{t_{n-1}}^{t_n}\int_{0}^{t}K(t-s)\mathscr B(u(s),v_h)\ {\rm d}s{\rm d}t\\
			&\quad -(\frac{\rho_n-\rho_{n-1}}{\Delta t_n},v_h)-A(\bar{\rho_n},v_h)-B(\bar{\rho}_{K}^{\Delta t,n},v_h).
		\end{split}
	\end{align}
	By substituting $ v_h=2\Delta t_n\bar{\theta}_n $ into $ \eqref{error-eq} $, the equation simplifies to:
	\begin{align}
		\begin{split}\label{fem_proof_eq}
			&\|\theta_n\|^2-\|\theta_{n-1}\|^2+2\Delta t_n\|\bar{\theta}_n\|_a^2+2\Delta t_n\mathscr{B}(\bar{\theta}_{K}^{\Delta t,n},\bar{\theta}_n)\\
			&=2\int_{t_{n-1}}^{t_n}(f(t),\bar{\theta}_n)\ {\rm d}t-2\Delta t_n(\bar{f}^n,\bar{\theta}_n)
			+2\Delta t_n\mathscr A(\bar{u}^n,\bar{\theta}_n)\\
			&\quad -2\int_{t_{n-1}}^{t_n}\mathscr A(u(t),\bar{\theta}_n)\ {\rm d}t 
			+2\Delta t_n\mathscr B(\bar{u}_K^{\Delta t,n},\bar{\theta}_n)\\
			&\quad -2\int_{t_{n-1}}^{t_n}\int_{0}^{t}K(t-s)\mathscr B(u(s),\bar{\theta}_n)\ {\rm d}s{\rm d}t-2\Delta t_n(\frac{\rho_n-\rho_{n-1}}{\Delta t_n},\bar{\theta}_n)\\
			&\quad -2\Delta t_n\mathscr A(\bar{\rho_n},\bar{\theta}_n)-2\Delta t_n\mathscr B(\bar{\rho}_{K}^{\Delta t,n},\bar{\theta}_n)\\
			&=\sum_{i=1}^{9}R_i.
		\end{split}
	\end{align}
	Next, we turn to estimate the terms $ R_1-R_9$.  Employing Young's inequality and \Cref{integration_by_part}, we utilize these techniques specifically for $R_1-R_4$ and $ R_7 $, resulting in:
	\begin{align*}
		R_1+R_2&=-\int_{t_{n-1}}^{t_n}(t-t_{n-1})(t_n-t)(f_{tt}(t),\bar{\theta}_n)\ {\rm d}t\\
		&\le C\Delta t^{\frac{5}{2}}(\int_{t_{n-1}}^{t_n}\|f_{tt}\|^2\ {\rm d}t)^{\frac{1}{2}}\|\bar{\theta}_n\|\\
		&\le C\Delta t^4\int_{t_{n-1}}^{t_n}\|f_{tt}\|^2\ {\rm d}t+\frac{\mu\Delta t}{5}\|\bar{\theta}_n\|_a^2,\\
		R_3+R_4&=\mathscr{A}(\int_{t_{n-1}}^{t_n}(t-t_{n-1})(t_n-t)u_{tt}\ {\rm d}t,\bar{\theta_n})\\
		&\le C\Delta t^{\frac{5}{2}}(\int_{t_{n-1}}^{t_n}\|u_{tt}\|_a^2\ {\rm d}t)^{\frac{1}{2}}\|\bar{\theta}_n\|_a\\
		&\le C\Delta t^4\int_{t_{n-1}}^{t_n}\|u_{tt}\|_a^2\ {\rm d}t+\frac{\mu\Delta t}{5}\|\bar\theta_n\|_a^2,\\
		R_7&=2\Delta t_n(\partial_t^+\rho_n,\bar{\theta}_n)\\
		&\le C\Delta t_n\|\partial_t^+\rho_n\|^2+\frac{\mu\Delta t}{5}\|\bar{\theta}_n\|_a^2.
	\end{align*}	
	Here, $ \mu\in (0,1) $ represents a constant that will be explicitly defined later. Regarding $ R_5+R_6 $, we rephrase and express them as follows:
	\begin{align*}
		R_5+R_6&=2\Delta t_n\mathscr{B}(\bar u_{K}^{\Delta t,n}-\frac{1}{\Delta t_n}\int_{t_{n-1}}^{t_n}F(t)\ {\rm d}t,\bar{\theta}_n)\\
		&=2\Delta t_n\sum_{j=1}^{n-1}K(\bar{t}_n-\bar{t}_j)\Delta t_j\mathscr{B}(\bar{u}_j-u(\bar{t}_j),\bar{\theta}_n)
		\\
		&\quad+\Delta t_n^2K(0)\mathscr{B}(\bar{u}^n-u(\bar{t}_n),\bar{\theta}_n)\\
		&\quad+2\Delta t_n\sum_{j=1}^{n-1}\mathscr{B}(\Delta t_jK(\bar{t}_n-\bar{t}_j)u(\bar{t}_j)-\int_{t_{j-1}}^{t_j}K(\bar{t}_n-s)u(s)\ {\rm ds},\bar\theta_n)\\
		&\quad+2\Delta t_n\mathscr{B}(\frac{\Delta t_n}{2}K(0)u(\bar{t}_n)-\int_{t_{n-1}}^{\bar t_n}K(\bar{t}_n-s)u(s)\ {\rm d}s,\bar{\theta}_n)\\
		&\quad+2\Delta t_n\mathscr{B}(F(\bar{t}_n)-\frac{1}{\Delta t_n}\int_{t_{n-1}}^{t_n}F(t)\ {\rm d}t,\bar{\theta}_n)\\
		&=\sum_{i=1}^{5}S_i.
	\end{align*}
	We define $ G(t)=K(\bar{t}_n-t)u(t) $ and proceed with standard techniques to estimate $ S_1-S_5 $, resulting in:
	\begin{align*}
		S_1&=\Delta t_n\sum_{j=1}^{n-1}K(\bar{t}_n-\bar{t}_j)\Delta t_j\mathscr{B}(\int_{t_{j-1}}^{\bar{t}_j}(t-t_{j-1})u_{tt}\ {\rm d}t,\bar{\theta}_n)\\
		&\quad+\Delta t_n\sum_{j=1}^{n-1}K(\bar{t}_n-\bar{t}_j)\Delta t_j\mathscr{B}(\int_{\bar t_{j}}^{t_j}(t-t_{j-1})u_{tt}\ {\rm d}t,\bar{\theta}_n)\\
		&\le C\Delta t\sum_{j=1}^{n-1}K(\bar{t}_n-\bar{t}_j)\Delta t^{\frac{5}{2}}(\int_{t_{j-1}}^{t_j}\|u_{tt}\|_a^2\ {\rm d}t)^{\frac{1}{2}}\|\bar{\theta}_n\|_a\\
		&\le C\Delta t^5\sum_{j=1}^{n-1}K(\bar{t}_n-\bar{t}_j)\int_{t_{j-1}}^{t_j}\|u_{tt}\|_a^2\ {\rm d}t+\frac{\mu\Delta t}{25}\|\bar{\theta}_n\|_a^2,\\
		S_2&=\Delta t_n^2K(0)\mathscr{B}(\frac{1}{2}\int_{t_{n-1}}^{\bar{t}_n}(t-t_{n-1})u_{tt}\ {\rm d}t+\frac{1}{2}\int_{\bar{t}_{n}}^{t_n}(t_n-t)u_{tt}\ {\rm d}t,\bar{\theta}_n)\\
		&\le C\Delta tK(0)\Delta t^{\frac{5}{2}}(\int_{t_{n-1}}^{t_n}\|u_{tt}\|_a^2\ {\rm d}t)^{\frac{1}{2}}\|\bar{\theta}_n\|_a \\
		& \le C\Delta t^4\int_{t_{n-1}}^{t_n}\|u_{tt}\|_a^2\ {\rm d}t+\frac{\mu\Delta t}{25}\|\bar{\theta}_n\|_a^2,\\
		S_3&=\Delta t_n\sum_{j=1}^{n-1}\mathscr{B}(\int_{t_{j-1}}^{t_j}(t-t_{j-1})(t_j-t)G_{tt}\ {\rm d}t,\bar{\theta}_n)\\
		&\quad-\Delta t_n\sum_{j=1}^{n-1}\Delta t_j\mathscr{B}(\int_{t_{j-1}}^{\bar{t}_j}(t-t_{j-1})G_{tt}\ {\rm d}t+\int_{\bar{t}_j}^{t_j}(t_j-t)G_{tt}\ {\rm d}t,\bar{\theta}_n)\\
		&\le C\Delta t\sum_{j=1}^{n-1}\Delta t^{5/2}(\int_{t_{j-1}}^{t_j}\|G_{tt}\|_a^2\ {\rm d}t)^{1/2}\|\bar{\theta}_n\|_a^2\\
		&\le C\Delta t^5\int_{0}^{t_{n-1}}\|G_{tt}\|_a^2\ {\rm d}t+\frac{\mu\Delta t}{25}\|\bar{\theta}_n\|_a^2,\\
		S_4&=2\Delta t_n\mathscr{B}(\int_{t_{n-1}}^{\bar{t}_n}(t-t_{n-1})G_t\ {\rm d}t,\bar{\theta}_n)\\
		&\le C\Delta t^{\frac{5}{2}}(\int_{t_{n-1}}^{t_n}\|G_t\|_a^2\ {\rm d}t)^{\frac{1}{2}}\|\bar{\theta}_n\|_a\le C\Delta t^4\int_{t_{n-1}}^{t_n}\|G_t\|_a^2\ {\rm d}t+\frac{\mu\Delta t}{25}\|\bar{\theta}_n\|_a^2,\\
		S_5&=\mathscr{B}(\int_{t_{n-1}}^{t_n}(t-t_{n-1})(t_n-t)F_{tt}\ {\rm d}t\\
		&\quad-\Delta t_n(\int_{t_{n-1}}^{\bar{t}_n}(t-t_{n-1})F_{tt}\ {\rm d}t+\int_{\bar{t}_n}^{t_n}(t_n-t)F_{tt}\ {\rm d}t),\bar{\theta}_n)\\
		&\le C\Delta t^4\int_{t_{n-1}}^{t_n}\|F_{tt}\|_a^2\ {\rm d}t+\frac{\mu\Delta t}{25}\|\bar{\theta}_n\|_a^2.
	\end{align*}
	
	Combining with all above estimate for $ S_i(1\le i\le 6) $, we conclude that 
	\begin{align*}
		R_5+R_6&\le C\Delta t^4\int_{t_{n-1}}^{t_n}(\|u_{tt}\|_a^2+\|G_t\|_a^2+\|F_{tt}\|_a^2)\ {\rm d}t
		+\frac{\mu\Delta t}{5}\|\bar{\theta}_n\|_a^2\\
		&\quad +C\Delta t^5\sum_{j=1}^{n-1}K(\bar{t}_n-\bar{t}_j)\int_{t_j}^{t_{j-1}}\|u_{tt}\|_a^2\ {\rm d}t
		+C\Delta t^5\int_{0}^{t_{n-1}}\|G_{tt}\|_a^2\ {\rm d}t.
	\end{align*}
	Finally, we move to bound the terms $ R_8-R_9 $. Letting 
	\begin{align*}
		H(t)=\int_0^tK(t-s)\rho(s)\ {\rm d}s,\qquad I(t)=K(\bar{t}_n-t)\rho(t),
	\end{align*}
	and using the definition of Ritz-Volterra projection, we can get
	\begin{align*}
		-2\Delta t_n \mathscr{A}(\bar{\rho}_n,\bar{\theta}_n)=\Delta t_n\mathscr{B}(
		H(t_n)+H(t_{n-1}),\bar{\theta}_n).
	\end{align*}
	Hence we can rewrite $ R_8+R_9 $ as: 
	\begin{align*}
		R_8+R_9&=2\Delta t_n\mathscr{B}(\bar{H}(t_n),\bar{\theta}_n)-2\Delta t_n\mathscr{B}(\bar{\rho}_{K}^{\Delta t,n},\bar{\theta}_n)\\
		&=2\Delta t_n\mathscr{B}(\bar{H}(t_n)-H(\bar{t}_n),\bar{\theta}_n)\\
		&\quad+2\Delta t_n\sum_{j=1}^{n-1}\mathscr{B}(\int_{t_{j-1}}^{t_j}K(\bar{t}_n-s)\rho(s)\ {\rm d}s-\Delta t_jK(\bar{t}_n-\bar{t}_j)\rho(\bar t_j),\bar{\theta}_n)\\
		&\quad+2\Delta t_n\mathscr{B}(\int_{t_{n-1}}^{\bar{t}_n}K(\bar{t}_n-s)\rho(s)\ {\rm d}s-\frac{\Delta t_n}{2}K(0)\rho(\bar{t}_n),\bar\theta_n)\\
		&\quad+2\Delta t_n\sum_{j=1}^{n-1}\mathscr{B}(\Delta t_jK(\bar{t}_n-\bar{t}_j)(\rho(\bar{t}_j)-\bar{\rho}_j),\bar{\theta}_n)\\
		&\quad+\Delta t_n^2K(0)\mathscr{B}(\rho(\bar{t}_n)-\bar\rho_n,\bar{\theta}_n).
	\end{align*}
	We employ a similar technique used to bound $ R_5+R_6 $ to estimate $ R_8+R_9 $, leading to:
	\begin{align*}
		R_8+R_9&\le C\Delta t^4\int_{t_{n-1}}^{t_n}(\|H_{tt}\|_a^2+\|I_t\|_a^2+\|\rho_{tt}\|_a^2)\ {\rm d}t+\frac{\mu\Delta t}{5}\|\bar\theta_n\|_a^2\\
		&\quad +C\Delta t^5\int_{0}^{t_{n-1}}\|I_{tt}\|_a^2\ {\rm d}t+C\Delta t^5\sum_{j=1}^{n-1}K(\bar{t}_n-\bar{t}_j)\int_{t_{n-1}}^{t_n}\|\rho_{tt}\|_a^2\ {\rm d}t.
	\end{align*}
	Now, we  sum $ \eqref{fem_proof_eq} $ from $ n=1 $ to $ N $ and use all above estimate for $ R_i(1\le i\le 9) $ to conclude
	\begin{align}\label{fem_proof_2}
		\begin{split}
			&\|\theta_N\|^2+2\sum_{n=1}^{N}\Delta t_n\|\bar{\theta}_n\|_a^2+2\sum_{n=1}^{N}\Delta t_n\mathscr{B}(\bar{\theta}_{K}^{\Delta t,n},\bar{\theta}_n)-\mu\Delta t\sum_{n=1}^{N}\|\bar{\theta}_n\|_a^2\\
			&\quad\le C\Delta t^4\int_{0}^{T}(\|f_{tt}\|_a^2+\|u_{tt}\|_a^2+\|F_{tt}\|_a^2+\|G_{t}\|_a^2+\|G_{tt}\|_a^2)\ {\rm d}t+\|\theta_0\|^2
			\\
			&\qquad +C\Delta t^4\int_{0}^{T}(\|H_{tt}\|_a^2+\|I_{t}\|_a^2+\|I_{tt}\|_a^2+\|\rho_{tt}\|_a^2)\ {\rm d}t
			+C\Delta t_n\sum_{n=1}^{N}\|\partial_t^+\rho_n\|^2	.
		\end{split}
	\end{align}
	Since 
	\begin{align*}
		&2\sum_{n=1}^{N}\Delta t_n\|\bar{\theta}_n\|_a^2+2\sum_{n=1}^{N}\Delta t_n\mathscr{B}(\bar{\theta}_K^{\Delta t,n},\bar{\theta}_n)-\mu\Delta t\sum_{n=1}^{N}\|\bar{\theta}_n\|_a^2\\
		&\quad=2\sum_{n=1}^{N}\Delta t_n\|\bar{\theta}_n\|_a^2
		-\mu\Delta t\sum_{n=1}^{N}\|\bar{\theta}_n\|_a^2
		\\
		&\qquad+2\sum_{n=1}^{N}\Delta t_n\mathscr{B}(-\frac{\Delta t_n}{2}K(0)\bar{\theta}_n+\sum_{j=1}^{n-1}\Delta t_jK(\bar{t}_n-\bar{t}_j)\bar{\theta}_j,\bar{\theta}_n)\\
		&\quad\ge 2\sum_{n=1}^{N}\Delta t_n\|\bar{\theta}_n\|_a^2
		-\mu\Delta t\sum_{n=1}^{N}\|\bar{\theta}_n\|_a^2
		\\
		&\qquad-2\sum_{n=1}^{N}\Delta t_nc_0\sum_{j=1}^{n}\Delta t_jK(\bar{t}_n-\bar{t}_j)\|\bar{\theta}_n\|_a\|\bar{\theta}_j\|_a\\
		&\quad\ge 2\sum_{n=1}^{N}\Delta t_n\|\bar{\theta}_n\|_a^2
		-\mu\Delta t\sum_{n=1}^{N}\|\bar{\theta}_n\|_a^2
		\\
		&\qquad-\sum_{n=1}^{N}\Delta t_nc_0\sum_{j=1}^{n}\Delta t_jK(\bar{t}_n-\bar{t}_j)(\|\bar{\theta}_n\|_a^2+\|\bar{\theta}_j\|_a^2)\\
		&\quad\ge (2-2c_0K_0)\sum_{n=1}^{N}\Delta t_n\|\bar{\theta}_n\|_a^2-\mu\Delta t\sum_{n=1}^{N}\|\bar{\theta}_n\|_a^2,
	\end{align*}
	we choose $ \mu\in (0,2C_1^{-1}(1-c_0K_0))  $ such that 
	\begin{align*}
		\mu\Delta t\sum_{n=1}^{N}\|\bar{\theta}_n\|_a^2\le C_1\mu(\delta t)\sum_{n=1}^{N}\|\bar{\theta}_n\|_a^2\le 2(1-c_0K_0)\sum_{n=1}^{N}\Delta t_n\|\bar{\theta}_n\|_a^2,
	\end{align*}
	and notice that
	\begin{align*}
		C\sum_{n=1}^{N}\Delta t_n\|\partial_t^+\rho_n\|^2=C\frac{1}{\Delta t_n}\sum_{n=1}^{n}\int_{\Omega}(\int_{t_{n-1}}^{t_n}\rho_t\ {\rm d}t)^2\ {\rm d} x
		\le C\int_0^T\|\rho_t\|^2\ {\rm d}t,
	\end{align*}
	therefore $ \eqref{fem_proof_2} $ becomes 
	\begin{align}\label{fem_proof_3}
		\|\theta_N\|^2\le C\Delta t^4+\int_0^t\|\rho_t\|^2\ {\rm d}t.
	\end{align}
	By applying \Cref{lemma505} and \eqref{fem_proof_3}, and employing the triangle inequality, we arrive at our final conclusion.
\end{proof}
Now, we will proceed to establish the error estimation between the solution of the standard finite element method \eqref{discrete-eq0} and our novel scheme \eqref{ISVD_eq1}.

\begin{lemma}\label{error_uh_uhat_another}
	Let $u_h^{n}$ and $\widehat{u}_h^{n}$ be the solution of \eqref{discrete-eq0} and \eqref{ISVD_eq1}, respectively. Given Assumption \ref{A2}, the following error bound is established:
	\begin{align*}
		\|u_h^N-\widehat{u}_h^N\|\le 
		\sqrt{T(1+\gamma^{-1})}\max_{1\le n\le N}\max_{1\le j \le n-1}\| \bar{\widetilde{u}}_h^{n-1,j}-\bar{\widehat{u}}_h^j\|_a,
	\end{align*}
	where 
	$  \gamma\in (0,2c_0^{-1}K_0^{-1}-2)   $ and $ c_0,K_0 $  are defined in  Assumption \ref{A2}.
\end{lemma}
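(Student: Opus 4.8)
The plan is to repeat the discrete energy argument of \Cref{fem_error}, but now comparing the two \emph{fully discrete} problems \eqref{discrete-eq0} and \eqref{ISVD_eq1} rather than the continuous and semidiscrete ones. The advantage is that the only inhomogeneity appearing in the resulting error equation is the data-compression discrepancy $\bar{\widehat{u}}_h^{j}-\bar{\widetilde{u}}_h^{n-1,j}$, so no Ritz--Volterra or time-truncation terms arise and the estimate is considerably shorter.

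Write $\xi_n=u_h^n-\widehat{u}_h^n$; since the first ISVD step is carried out exactly, $\widehat{u}_h^0=u_h^0$ and hence $\xi_0=0$. Subtracting \eqref{ISVD_eq1} from \eqref{discrete-eq0} and using the splitting $\bar u_h^{j}-\bar{\widetilde{u}}_h^{n-1,j}=\bar\xi_j+(\bar{\widehat{u}}_h^{j}-\bar{\widetilde{u}}_h^{n-1,j})$ in the memory term, the error equation becomes, for all $v_h\in V_h$,
\[
(\partial_t^+\xi_n,v_h)+\mathscr A(\bar\xi_n,v_h)+\mathscr B(\bar\xi_K^{\Delta t,n},v_h)=-\mathscr B(G^n,v_h),
\]
where $\bar\xi_K^{\Delta t,n}=\sum_{j=1}^{n-1}\Delta t_jK(\bar t_n-\bar t_j)\bar\xi_j+\tfrac{\Delta t_n}{2}K(0)\bar\xi_n$ is a convolution of the errors and $G^n=\sum_{j=1}^{n-1}\Delta t_jK(\bar t_n-\bar t_j)(\bar{\widehat{u}}_h^{j}-\bar{\widetilde{u}}_h^{n-1,j})$ collects the compression discrepancies.

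Next I would take $v_h=2\Delta t_n\bar\xi_n$, use $2\Delta t_n(\partial_t^+\xi_n,\bar\xi_n)=\|\xi_n\|^2-\|\xi_{n-1}\|^2$ and $2\Delta t_n\mathscr A(\bar\xi_n,\bar\xi_n)=2\Delta t_n\|\bar\xi_n\|_a^2$, and sum over $n=1,\dots,N$ with $\xi_0=0$. For the memory term on the left, the same manipulation used at the end of the proof of \Cref{fem_error} --- $|\mathscr B(u,v)|\le c_0\|u\|_a\|v\|_a$, Young's inequality inside the double sum, and the discrete-kernel bound $\sum_j\Delta t_jK(\bar t_n-\bar t_j)\le K_0$ --- gives $2\sum_n\Delta t_n\|\bar\xi_n\|_a^2+2\sum_n\Delta t_n\mathscr B(\bar\xi_K^{\Delta t,n},\bar\xi_n)\ge(2-2c_0K_0)\sum_n\Delta t_n\|\bar\xi_n\|_a^2$, now without any $O(\Delta t)$ loss since there are no consistency terms to bound. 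Putting $\mathcal E:=\max_{1\le n\le N}\max_{1\le j\le n-1}\|\bar{\widetilde{u}}_h^{n-1,j}-\bar{\widehat{u}}_h^{j}\|_a$ and using $\|G^n\|_a\le K_0\mathcal E$ together with $2ab\le\gamma a^2+\gamma^{-1}b^2$ termwise and $\sum_n\Delta t_n=T$, the right-hand side is bounded by $\gamma c_0K_0\sum_n\Delta t_n\|\bar\xi_n\|_a^2+\gamma^{-1}c_0K_0T\mathcal E^2$, so that altogether
\[
\|\xi_N\|^2+\bigl(2-(2+\gamma)c_0K_0\bigr)\sum_{n=1}^N\Delta t_n\|\bar\xi_n\|_a^2\le\gamma^{-1}c_0K_0\,T\,\mathcal E^2 .
\]
The restriction $\gamma\in(0,2c_0^{-1}K_0^{-1}-2)$ is precisely $2-(2+\gamma)c_0K_0>0$, so the bracketed term is nonnegative and may be discarded; then $c_0K_0<1$ and $\gamma^{-1}\le1+\gamma^{-1}$ give $\|\xi_N\|^2\le T(1+\gamma^{-1})\mathcal E^2$, which is the assertion.

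The main obstacle --- and the only place where Assumption \ref{A2} is actually used --- is the convolution bookkeeping: controlling the discrete memory kernel by $K_0$ in the left-hand coercivity estimate (including after interchanging the order of summation in the $\xi$-convolution) and on the right-hand side, and tuning the Young parameter so that the condition needed to absorb the convolution into the coercive term comes out exactly as $\gamma<2c_0^{-1}K_0^{-1}-2$. Everything else is a simplified copy of the energy estimate of \Cref{fem_error}. As the subsequent remark indicates, if $K$ is positive definite and $\mathscr B$ is symmetric and coercive one can instead keep $2\sum_n\Delta t_n\mathscr B(\bar\xi_K^{\Delta t,n},\bar\xi_n)\ge0$ by positivity and reach the same bound without Assumption \ref{A2}.
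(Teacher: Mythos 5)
Your proposal is correct and follows essentially the same route as the paper: subtract the two fully discrete schemes, test with $2\Delta t_n\bar{\widehat{e}}_n$, split the memory-term error into the accumulated error plus the compression discrepancy, and absorb the resulting convolution using $c_0K_0<1$ with the Young parameter $\gamma\in(0,2c_0^{-1}K_0^{-1}-2)$. The only (cosmetic) difference is that you perform the splitting in the error equation and treat the discrepancy as an inhomogeneity $G^n$, whereas the paper keeps the full memory term on one side and applies $(a+b)^2\le(1+\gamma)a^2+(1+\gamma^{-1})b^2$ inside the Cauchy--Schwarz estimate; both land on the same bound $\|u_h^N-\widehat{u}_h^N\|^2\le T(1+\gamma^{-1})\mathcal{E}^2$.
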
 
\begin{proof}
	Recall that $u_h^{n}$ and $\widehat  u_h^{n}$ satisfy the following equations
	\begin{subequations}
		\begin{align}
			(\partial_t^+u_h^n,v_h)+\mathscr A(\bar{u}_h^n,v_h)+\mathscr B(\bar{u}_{h,K}^{\Delta t,n},v_h)&=(\bar{f}^n,v_h)	,\label{fem_so1}\\	
			(\partial_t^+\widehat{u}_h^n,v_h)+\mathscr A(\bar{\widehat{u}}_h^n,v_h)+\mathscr B(M^n,v_h)&=(\bar{f}^n,v_h)		.\label{ISVD_so1}
		\end{align}
	\end{subequations}
	Subtracting    \eqref{fem_so1}  from $ \eqref{ISVD_so1} $ and introducing the following notations:  
	\begin{align*}
		\widehat{e}_{n}&=u_{h}^{n}-\widehat{u}_{h}^{n},\qquad\quad \ \widetilde{e}_{n,j}=u_{h}^{j}-\widetilde{u}_{h}^{n,j},\\
		\bar{\widetilde{e}}_{n,j}&=\frac{\widetilde{e}_{n,j}+\widetilde{e}_{n,j-1}}{2},\qquad
		\bar{\widehat{e}}_{n}=\frac{\widehat{e}_n+\widehat{e}_{n+1}}{2},
	\end{align*} 
	we  obtain the following error equation:
	\begin{align}\label{eq6161}
		\begin{split}
			&\left(\partial_t^+\widehat{e}_n,v_h\right)+ \mathscr{A}(\bar{\widehat{e}}_{n},v_h)+\sum_{j=1}^{n-1}\Delta t_jK(\bar{t}_n-\bar{t}_j)\mathscr{B}(\bar{\widetilde{e}}_{n-1,j},v_h)\\
			&\quad+\frac{\Delta t_n}{2}K(0)\mathscr{B}(\bar{\widehat{e}}_n,v_h)=0.
		\end{split}
	\end{align}
	Substituting $v_h = 2\Delta t_n\bar{\widehat{e}}_{n} \in V_h$ into the above equation, we derive the following equation
	\begin{align*}
		&\|\widehat{e}_n\|^2-\|\widehat{e}_{n-1}\|^2+2\Delta t_n\|\bar{\widehat{e}}_n\|_a^2\\
		&\quad=-2\Delta t_n\sum_{j=1}^{n-1}\Delta t_jK(\bar{t}_n-\bar{t}_j)\mathscr{B}(\bar{\widetilde{e}}_{n-1,j},\bar{\widehat{e}}_n) -\Delta t_n^2K(0)\mathscr{B}(\bar{\widehat{e}}_n,\bar{\widehat{e}}_n).
	\end{align*}
	Using  Cauchy-Schwarz inequality, Assumption \ref{A2}, we can deduce the following equation
	\begin{align*}
		&\|\widehat{e}_n\|^2-\|\widehat{e}_{n-1}\|^2+2\Delta t_n\|\bar{\widehat{e}}_n\|_a^2\\
		&\quad\le 2c_0\Delta t_n\sum_{j=1}^{n-1}\Delta t_jK(\bar{t}_n-\bar{t}_j)\|\bar{\widetilde{e}}_{n-1,j}\|_a\|\bar{\widehat{e}}_n\|_a +c_0\Delta t_n^2K(0)\|\bar{\widehat{e}}_n\|_a^2
		\\
		&\quad\le c_0\Delta t_n\sum_{j=1}^{n-1}\Delta t_jK(\bar{t}_n-\bar{t}_j)(\|\bar{\widetilde{e}}_{n-1,j}\|_a^2+\|\bar{\widehat{e}}_n\|_a^2)+c_0\Delta t_n^2K(0)\|\bar{\widehat{e}}_n\|_a^2\\
		&\quad\le c_0\Delta t_n\sum_{j=1}^{n-1}\Delta t_jK(\bar{t}_n-\bar{t}_j)((\|\bar{\widehat{e}}_j-\bar{\widetilde{e}}_{n-1,j}\|_a+\|\bar{\widehat{e}}_j\|_a)^2+\|\bar{\widehat{e}}_n\|_a^2)  +c_0\Delta t_n^2K(0)\|\bar{\widehat{e}}_n\|_a^2\\
		&\quad\le c_0\Delta t_n\sum_{j=1}^{n}\Delta t_jK(\bar{t}_n-\bar{t}_j)
		((1+\gamma)\|\bar{\widehat{e}}_j\|_a^2+(1+\gamma^{-1})\|\bar{\widetilde{e}}_{n-1,j}-\bar{\widehat{e}}_j\|_a^2+\|\bar{\widehat{e}}_n\|_a^2),
	\end{align*}
	for some  $ \gamma \in (0,1) $. Summing over $n$ ranging from $1$ to $N$, we have:
	\begin{align}
		\label{main_proof5061}
		\begin{split}
			&\|\widehat{e}_n\|^2-\|\widehat{e}_0\|^2+2\Delta t_n\sum_{n=1}^{N}\|\bar{\widehat{e}}_n\|_a^2\\
			&\quad\le c_0\sum_{n=1}^{N}\Delta t_n\sum_{j=1}^{n}\Delta t_jK(\bar{t}_n-\bar t_j)(1+\gamma)\|\bar{\widehat{e}}_j\|_a^2\\
			&\qquad +c_0\sum_{n=1}^{N}\Delta t_n\sum_{j=1}^{n}\Delta t_jK(\bar{t}_n-\bar t_j)\|\bar{\widehat{e}}_n\|_a^2\\
			&\qquad + c_0\sum_{n=1}^{N}\Delta t_n\sum_{j=1}^{n}\Delta t_jK(\bar{t}_n-\bar t_j)(1+\gamma^{-1})\|\bar{\widetilde{e}}_{n-1,j}-\bar{\widehat{e}}_j\|_a^2\\
			& \le (2+\gamma)c_0K_0\sum_{n=1}^{N}\Delta t_n\|\bar{\widehat{e}}_n\|_a^2\\
			&\qquad +(1+\gamma^{-1})c_0K_0\sum_{n=1}^{N}\Delta t_n{\small \max_{1\le j\le n-1}}\|\bar{\widetilde{e}}_{n-1,j}-\bar{\widehat{e}}_j\|_a^2.
		\end{split}
	\end{align}
	Using Assumption \ref{A2}, we can choose  $  \gamma\in (0,2c_0^{-1}K_0^{-1}-2)   $ such that
	\begin{align*}
		(2+\gamma)c_0K_0<2.
	\end{align*}
	Then  \eqref{main_proof5061}  becomes 
	\begin{align*}
		\|\widehat{e}_N\|^2&\le \|\widehat{e}_0\|^2+(1+\gamma^{-1})c_0K_0T\max_{1\le n\le N}\max_{1\le j \le n-1}\|\bar{\widetilde{e}}_{n-1,j}-\bar{\widehat{e}}_j\|_a^2\\
		&\le (1+\gamma^{-1})T\max_{1\le n\le N}\max_{1\le j \le n-1}\|\bar{\widetilde{e}}_{n-1,j}-\bar{\widehat{e}}_j\|_a^2.
	\end{align*}
	
	The proof is finalized by utilizing $\widehat{e}_0 = 0$ along with the condition $c_0K_0 < 1$ (Assumption \ref{A2}).
\end{proof} 
Next we turn to estimate the  term $\displaystyle\max_{1\le n\le N}\max_{1\le j \le n-1}\|\widetilde{u}_h^{n-1,j}-\widehat{u}_h^j\|_a  $. We have the following error bound:

\begin{lemma}\label{maxmax}
	Let $\{\widehat{u}_h^n\}_{n=1}^N$ be the solution of \eqref{ISVD_eq1}, and let $\{\widetilde{u}_h^{n,j}\}_{j=0}^{n}$ represent the compressed data corresponding to $\{ \widetilde{u}_h^{n-1,0}, \widetilde{u}_h^{n-1,1},\ldots, \widetilde{u}_h^{n-1,n-1}, \widehat{u}_h^{n}\}$. This compressed solution is obtained using the incremental SVD method with a tolerance of \textup{\texttt{tol}} applied to both $p$-truncation and singular value truncation. Let $ T_{sv} $ represent the total number of times the singular
	value truncation is applied, then we can obtain the following inequality:
	\begin{align*}
		\max_{1\le n\le N}\max_{1\le j \le n-1}\|\widetilde{u}_h^{n,j}-\widehat{u}_h^j\|_a\le (T_{sv} +1)\sqrt{\sigma(A)}\textup{\texttt{tol}},
	\end{align*}
	where $\sigma(A)$ represents the spectral radius of the stiffness matrix $A$.
\end{lemma}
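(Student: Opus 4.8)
The plan is to pass from the energy norm $\|\cdot\|_a$ to the Euclidean norm of coefficient vectors, and then to bound the latter by tracking how much the incremental SVD perturbs each stored column over the whole run of the algorithm. For the reduction step, note that for any $v_h=\sum_{i=1}^m v_i\phi_i\in V_h$ with coefficient vector $v\in\mathbb R^m$ one has $\|v_h\|_a^2=\mathscr A(v_h,v_h)=v^\top A v\le\sigma(A)\|v\|_2^2$, since $A$ is symmetric positive definite and $\sigma(A)$ equals its largest eigenvalue. Hence it suffices to prove that the coefficient vectors satisfy $\|\widetilde u_{n,j}-\widehat u_j\|_2\le(T_{sv}+1)\,\texttt{tol}$ for all $n$ and all $1\le j\le n$, where $\widetilde u_{n,j}$ and $\widehat u_j$ are the coefficient vectors of $\widetilde u_h^{n,j}$ and $\widehat u_h^j$.

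The heart of the argument is to examine the three elementary operations that make up one call to \Cref{alg202}. As observed after \Cref{error_SingularValueTruncation}, the $p$-truncation (Step 2) and the no-truncation update (Step 3) leave every \emph{previously} stored column unchanged. When a new column $u$ is first absorbed by a $p$-truncation it is replaced by $QQ^\top u$, so by \eqref{lessthantol} it is perturbed by strictly less than $\texttt{tol}$ in Euclidean norm; when it is first absorbed through Step 3 it is represented exactly, since $p\widetilde e+QQ^\top u=u$. Finally, each time a singular value truncation (Step 4) is carried out, \Cref{error_SingularValueTruncation} together with the truncation test $\Sigma_Y(k+1,k+1)<\texttt{tol}$ shows that \emph{every} currently stored column moves by at most $\texttt{tol}$ in Euclidean norm. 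All these bounds are absolute, so errors produced in different rounds simply add under the triangle inequality.

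Now fix $j$ and follow $\widehat u_h^j$ from the instant it enters \Cref{alg202}. While it sits in the buffer $W$ it is kept verbatim, so no error is incurred; it leaves $W$ at most once, and is absorbed either through a single $p$-truncation (cost $<\texttt{tol}$) or through Step 3 (cost $0$). Thereafter the only operation that can alter it is a singular value truncation, each costing at most $\texttt{tol}$; since there are exactly $T_{sv}$ singular value truncations in the entire run, at most $T_{sv}$ of them act on this column. Adding the contributions gives $\|\widetilde u_{n,j}-\widehat u_j\|_2<\texttt{tol}+T_{sv}\,\texttt{tol}=(T_{sv}+1)\,\texttt{tol}$, and the norm reduction above converts this into the claimed bound after taking the maximum over $n$ and $j$. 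The main obstacle is precisely this bookkeeping: one must be sure that a given column is hit by at most one $p$-truncation and by no more than the globally counted $T_{sv}$ singular value truncations, and one must invoke the ``previous data unchanged'' property of Steps 2 and 3 correctly so that the per-round perturbations accumulate additively rather than interacting.
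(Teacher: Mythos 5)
Your argument is correct and follows essentially the same route as the paper's proof: a telescoping/additive accounting of the per-step perturbations, the observation that $p$-truncation and no-truncation leave previously stored columns unchanged so only the initial absorption and the $T_{sv}$ singular value truncations contribute (each at most $\texttt{tol}$ by \Cref{error_SingularValueTruncation}), and the passage from $\|\cdot\|_a$ to the Euclidean norm via the spectral radius $\sigma(A)$. No gaps; the bookkeeping you describe is exactly the paper's.
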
 

\begin{proof}
	Assuming $\widehat u_j$ and $\widetilde u_{k,\ell}$ are the coefficients of $\widehat {u}_h^{j}$ and $\widetilde u_h^{k,\ell}$ corresponding to the finite element basis functions $\{\phi_s\}_{s=1}^m$, respectively, we establish the following inequality for $0\le j\le n-1\le N-1$:
	\begin{align*}
		\|\widetilde{u}_h^{n,j}-\widehat{u}_h^j\|_a &\le 	\sum_{k=0}^{n-j-1}\|\widetilde{u}_h^{n-k,j}-\widetilde{u}_h^{n-k-1,j}\|_a + \|\widetilde{u}_h^{j,j}-\widehat u_h^j\|_a\\
		& = \sum_{k=0}^{n-j-1}   \sqrt{ (\widetilde u_{n-k,j} - \widetilde u_{n-k-1,j})^\top A (\widetilde u_{n-k,j} - \widetilde u_{n-k-1,j})} \\
		&\quad +     \sqrt{ (\widetilde u_{j,j} - \widehat u_{j})^\top A (\widetilde u_{j,j} - \widehat u_{j})} \\
		&  \le \sum_{k=0}^{n-j-1} \sqrt{\sigma(A)} |\widetilde u_{n-k,j} - \widetilde u_{n-k-1,j}| +  \sqrt{\sigma(A)} |\widetilde u_{j,j} - \widehat u_{j}|.
	\end{align*}
	
	Here, $\sigma(A)$ represents the spectral radius of the stiffness matrix $A$. It is  notable that $\widetilde u_{n-k,j}$ corresponds to the $j$-th compressed data at the $n-k$-th step, as illustrated by:
	\begin{align*}
		[\widetilde u_{n-k-1,0}\mid \widetilde u_{n-k-1,1} \mid \ldots \mid\widetilde u_{n-k-1,n-k-2}\mid \widehat u_{n-k-1}]\\
		\stackrel{\text{Compressed}}{\longrightarrow } [\widetilde u_{n-k,0}\mid \widetilde u_{n-k,1}\mid\ldots\mid \widetilde u_{n-k,n-k}].
	\end{align*}
	
	Furthermore, considering that both $p$ truncation and no truncation maintain the prior data unchanged, it follows that at most $\min\{T_{sv},n-j-1\} $ terms of $\{\widetilde u_{n-k,j}- \widetilde u_{n-k-1,j}\}_{k=0}^{n-j-1}$ are non-zero, where $ T_{sv} $ represents the total number of times singular value truncation is applied. Consequently, for any $0\le j\le n-1\le N-1$, employing \Cref{error_SingularValueTruncation}, we can derive:
	\begin{align*}
		\max_{1\le n\le N}\max_{1\le j \le n-1}\|\widetilde{u}_h^{n,j}-\widehat{u}_h^j\|_a\le (T_{sv} +1)\sqrt{\sigma(A)} \texttt{tol}.
	\end{align*}
\end{proof}

Hence, by applying the triangle inequality to \Cref{fem_error,error_uh_uhat_another,maxmax}, we can derive the error estimate for $\|u(t_n)-\widehat{u}_{h}^{n} \| $. This completes the proof of \Cref{main_res}.
\section{Singular Kernel Case}\label{singularkernel}
In this section, we extend the application of our new method to a more general scenario wherein the memory kernel exhibits singularity. It is crucial to note that the primary divergence between these two cases—singular and non-singular kernels—lies in the treatment of the convolution quadrature for the memory term. We reiterate that the equations under consideration in this paper are as follows:
\begin{align}\label{singular-integro-differential equation}
	u_t+\mathcal Au+\int_0^t K(t-s) \mathcal Bu(s)\ {\rm d}s&=f(x,t)
\end{align}
with singular kernel 
\begin{align*}
	K(t)=e^{-\lambda t}\frac{1}{\Gamma(\alpha)}t^{\alpha-1},
\end{align*}
where $ \alpha\in (0,1) $. It is well known that when $ \lambda=0 $, the kernel is Abel kernel $ K(t)=t^{\alpha-1}/\Gamma(\alpha) $.  To obtain the numerical scheme of \eqref{singular-integro-differential equation}, we use the second order  convolution quadrature rule from \cite{QiuWenlin2023AFEG}:
\begin{align*}
	Q_{t_n}^{(\alpha,\lambda)}(\phi)=\Delta t^{\alpha}\sum_{p=0}^{n}\mathcal{X}_p^{(\alpha,\lambda)}\phi(t_n-t_p)+\varpi_n^{(\alpha,\lambda)}\phi(0),
\end{align*}
where 
\begin{align*}
	&\mathcal{X}_n^{(\alpha,\lambda)}=e^{-\lambda t_n}\mathcal{X}_n^{(\alpha,0)},\qquad
	\varpi_n^{(\alpha,\lambda)}=\frac{e^{-\lambda t_n}t_n^{\alpha}}{\mathcal{X}_n^{(\alpha,0)}}-\Delta t^{\alpha}\sum_{p=0}^{n}e^{-\lambda(t_n-t_p)}\mathcal{X}_p^{(\alpha,\lambda)},\\
	&\mathcal{X}_n^{(\alpha,0)}=(\frac{3}{2})^{-\alpha}\sum_{s=0}^{n}3^{-s}\sigma_s^{(\alpha)}\sigma_{n-s}^{(\alpha)},\qquad
	\sigma_s^{(\alpha)}=\frac{\Gamma(\alpha+s)}{\Gamma(\alpha)\Gamma(s+1)}.
\end{align*}

\begin{remark}
	During the coding process, double-precision floating-point numbers are commonly employed to store and process data. It's crucial to acknowledge that the representation range for double-precision floating-point numbers is approximately $ \pm 1.8\times 10^{308} $, while $ \Gamma(172)\approx 1.25\times10^{309} $. Consequently, handling the term $\sigma_s^{(\alpha)} $ necessitates careful consideration. In this paper, we leverage the property of the Gamma function $ \Gamma(1+k)=k\Gamma(k) $ to obtain:
	\begin{align*}
		\sigma_s^{(\alpha)}&=\frac{(\alpha+s-1)(\alpha+s-2)\cdots(\alpha)\Gamma(\alpha)}{\Gamma(\alpha)s(s-1)\cdots 1\cdot\Gamma(1)}=(\frac{\alpha+s-1}{s})(\frac{\alpha+s-2}{s-1})\cdots \frac{\alpha}{1}.
	\end{align*}
\end{remark}

Then, we derive the numerical scheme for \eqref{singular-integro-differential equation}, comprising the Continuous Galerkin method for space discretization and the BDF2 scheme for temporal discretization. Specifically, given $ u_h^0\in V_h $, the objective is to find $ u_h^1\in V_h $ and $ u_h^n(2\le n\le N)\in V_h $ such that:
\begin{subequations}\label{singular-numerical scheme}
	\begin{align}
		(\partial_t^+u_h^1,v_h)+\mathscr{A}(u_h^1,v_h)+\Delta t^{\alpha}\sum_{p=0}^{1}\mathcal{X}_p^{(\alpha,\lambda)}\mathscr{B}(u_h^{1-p},v_h)+\varpi_1^{(\alpha,\lambda)}\mathscr{B}(u_h^0,v_h)&=(f^1,v_h),\\
		(D_t^{(2)}u_h^n,v_h)+\mathscr{A}(u_h^n,v_h)+\Delta t^{\alpha}\sum_{p=0}^{n}\mathcal{X}_p^{(\alpha,\lambda)}\mathscr{B}(u_h^{n-p},v_h)+\varpi_n^{(\alpha,\lambda)}\mathscr{B}(u_h^0,v_h)&=(f^n,v_h),
	\end{align}
\end{subequations}
with
\begin{align*}
	D_t^{(2)}u_h^n=\frac{3}{2}\partial_t^+u_h^n-\frac{1}{2}\partial_t^+u_h^{n-1},\qquad
	\partial_t^+u_h^n=\frac{u_h^n-u_h^{n-1}}{\Delta t}.
\end{align*}

Now, we introduce our new method by incorporating the incremental SVD algorithm, resulting in the following objective: given $ u_h^0\in V_h $, the goal is to find $ \widehat u_h^1\in V_h $ and $ \widehat u_h^n(2\le n\le N)\in V_h $ such that:
\begin{subequations}\label{singular-fem-svd-scheme}
	\begin{align}
		(\partial_t^+\widehat{u}_h^1,v_h)+\mathscr{A}(\widehat{u}_h^1,v_h)+\Delta t^{\alpha}\sum_{p=0}^{1}\mathcal{X}_p^{(\alpha,\lambda)}\mathscr{B}(\widehat{u}_h^{1-p},v_h)+\varpi_1^{(\alpha,\lambda)}\mathscr{B}(\widehat{u}_h^0,v_h)&=(f^1,v_h),\\
		\begin{split}
			(D_t^{(2)}\widehat{u}_h^n,v_h)+\mathscr{A}(\widehat{u}_h^n,v_h)+\Delta t^{\alpha}\sum_{p=1}^{n}\mathcal{X}_p^{(\alpha,\lambda)}\mathscr{B}(\widetilde{u}_h^{n-1,n-p},v_h)&\\
			+\Delta t^{\alpha}\mathcal{X}_0^{(\alpha,\lambda)}\mathscr{B}(\widehat u_h^n,v_h)+\varpi_n^{(\alpha,\lambda)}\mathscr{B}(\widehat{u}_h^{0},v_h)&=(f^n,v_h).
		\end{split}
	\end{align}
\end{subequations}
Similar to \Cref{Error_estimate}, we initially present the main result, followed by a two-step proof strategy. The approach involves deriving the error bound between \eqref{singular-integro-differential equation} and \eqref{singular-numerical scheme}, and subsequently estimating the error between \eqref{singular-numerical scheme} and \eqref{singular-fem-svd-scheme}.

Now, we demonstrate our main result :
\begin{theorem}\label{main_result 2}
	Let $u$ and $\widehat {u}_h^N$ denote the solution of \eqref{integro-diff-Eq} and \eqref{singular-fem-svd-scheme}, respectively. Throughout the entire process of the incremental SVD algorithm, the tolerance \textup{\texttt{tol}} is applied to both $p$-truncation and singular value truncation.  If $u(t)\in H^{k+1}(\Omega)$ and assume that $ \mathscr{B} $ is nonnegative symmetric, then the following error bound holds:
	\begin{align}\label{error_est1}
		\| u(T)-\widehat {u}_h^N\|\le C(h^{k+1}+\Delta t^{1+\alpha})+C( {T_{sv}}+1)\sqrt{T\sigma(A)}\mathtt{tol},
	\end{align}
	where $C$ denotes a  positive constants, independent of $h$, $\Delta t$, and \textup{\texttt{tol}}, and $\sigma(A)$ represents the spectral radius of the stiffness matrix $A$, $ T_{sv}$ signifies  the total number of times singular value truncation is applied.
\end{theorem}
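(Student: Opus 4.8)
The plan is to reproduce, \emph{mutatis mutandis}, the two--step strategy used for \Cref{main_res}: first bound the error between the exact solution of \eqref{singular-integro-differential equation} and the standard scheme \eqref{singular-numerical scheme}, then bound the perturbation introduced by the incremental SVD compression, i.e.\ the error between \eqref{singular-numerical scheme} and \eqref{singular-fem-svd-scheme}; the triangle inequality then yields \eqref{error_est1}. The only structural change relative to \Cref{Error_estimate} is that the midpoint convolution rule is replaced by the second--order convolution quadrature $Q_{t_n}^{(\alpha,\lambda)}$, so every energy argument must be carried out with the BDF2 difference operator $D_t^{(2)}$ and with the quadrature weights $\{\mathcal{X}_p^{(\alpha,\lambda)}\}$ (plus the startup weights $\varpi_n^{(\alpha,\lambda)}$) in place of $\{\Delta t_j K(\bar t_n-\bar t_j)\}$.

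For the first step I would introduce the Ritz--Volterra projection of $u$, as in \Cref{lemma505}, split $u(t_n)-u_h^n=\rho_n+\theta_n$, test the error equation for $\theta_n$ with $2\Delta t\,\theta_n$, and invoke: (i) the standard BDF2 $G$-stability identity, which turns $2\Delta t(D_t^{(2)}\theta_n,\theta_n)$ into a telescoping nonnegative quantity bounded below by $\tfrac12\|\theta_N\|^2$; (ii) $\mathscr{A}(\theta_n,\theta_n)=\|\theta_n\|_a^2\ge 0$; and (iii) the \emph{positive definiteness} of the convolution quadrature, which holds because $\mathscr{B}$ is nonnegative symmetric and $K(t)=e^{-\lambda t}t^{\alpha-1}/\Gamma(\alpha)$ is completely monotone, so that $\sum_{n}\big(\Delta t^{\alpha}\sum_{p=0}^{n}\mathcal{X}_p^{(\alpha,\lambda)}\mathscr{B}(\theta_{n-p},\theta_n)\big)\ge 0$ (the relevant property of this rule is established in \cite{QiuWenlin2023AFEG}). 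The consistency errors of the BDF2 quotient, of $Q_{t_n}^{(\alpha,\lambda)}$, and of the projection then contribute $\mathcal O(\Delta t^{1+\alpha})$ and $\mathcal O(h^{k+1})$; combined with \Cref{lemma505} and a discrete Gr\"onwall inequality this gives $\|u(t_n)-u_h^n\|\le C(h^{k+1}+\Delta t^{1+\alpha})$.

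For the second step set $\widehat e_n=u_h^n-\widehat u_h^n$. Since the first--step schemes coincide and $u_h^0=\widehat u_h^0$, we have $\widehat e_0=\widehat e_1=0$, and for $n\ge 2$ subtracting \eqref{singular-fem-svd-scheme} from \eqref{singular-numerical scheme} gives
\begin{align*}
  (D_t^{(2)}\widehat e_n,v_h)+\mathscr{A}(\widehat e_n,v_h)
  +\Delta t^{\alpha}\mathcal{X}_0^{(\alpha,\lambda)}\mathscr{B}(\widehat e_n,v_h)
  +\Delta t^{\alpha}\sum_{p=1}^{n}\mathcal{X}_p^{(\alpha,\lambda)}\mathscr{B}\big(u_h^{n-p}-\widetilde u_h^{n-1,n-p},v_h\big)=0 .
\end{align*}
Writing $u_h^{n-p}-\widetilde u_h^{n-1,n-p}=\widehat e_{n-p}+\big(\widehat u_h^{n-p}-\widetilde u_h^{n-1,n-p}\big)$ splits the memory term into a full convolution in $\widehat e$ — which, after testing with $v_h=2\Delta t\,\widehat e_n$ and summing over $n$, is nonnegative by the same positive definiteness used in Step~1 and can be discarded — and a compression--error part. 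Using $|\mathscr{B}(u,v)|\le c_0\|u\|_a\|v\|_a$, the uniform consistency bound $\Delta t^{\alpha}\sum_{p}\mathcal{X}_p^{(\alpha,\lambda)}\le C$ (the weights approximate $K_0=\int_0^T K$), Young's inequality to absorb the resulting $\|\widehat e_n\|_a^2$ against $2\Delta t\,\mathscr{A}(\widehat e_n,\widehat e_n)$, and the $G$-stability telescoping, I obtain
\begin{align*}
  \|\widehat e_N\|^2\le C\,\Delta t\sum_{n=2}^{N}\max_{1\le j\le n-1}\big\|\widehat u_h^{j}-\widetilde u_h^{n-1,j}\big\|_a^2
  \le C\,T\max_{n,j}\big\|\widehat u_h^{j}-\widetilde u_h^{n-1,j}\big\|_a^2 .
\end{align*}
Since the statement and proof of \Cref{maxmax} are agnostic to the time discretization, it applies verbatim and bounds the right--hand side by $CT\sigma(A)(T_{sv}+1)^2\texttt{tol}^2$, whence $\|u_h^N-\widehat u_h^N\|\le C(T_{sv}+1)\sqrt{T\sigma(A)}\,\texttt{tol}$; the triangle inequality then completes the proof.

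The main obstacle is the energy argument of the first step: one must verify that the particular second--order rule $Q_{t_n}^{(\alpha,\lambda)}$ of \cite{QiuWenlin2023AFEG} — including the startup correction weights $\varpi_n^{(\alpha,\lambda)}$, which cancel from the compression error equation because $\widehat e_0=0$ but must be controlled against the exact solution in Step~1 — is positive definite and simultaneously compatible with the $G$-matrix of BDF2, so that the time stepping and the memory term can be estimated together. Establishing the sharp $\Delta t^{1+\alpha}$ consistency rate under only the stated spatial regularity (which implicitly tolerates the usual $t^{\alpha}$-type boundary layer of $u$ near $t=0$) is the other delicate point, and there I would rely directly on the truncation--error estimates proved for this quadrature in \cite{QiuWenlin2023AFEG}.
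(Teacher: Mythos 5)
Your proposal is correct and follows essentially the same route as the paper: the same two-step decomposition (exact vs.\ standard BDF2--convolution-quadrature scheme, then standard vs.\ ISVD scheme), the same reliance on the positive definiteness of the weights $\mathcal{X}_p^{(\alpha,\lambda)}$ together with the nonnegative symmetry of $\mathscr{B}$ to discard the convolution term in $\widehat e$, the same BDF2 telescoping lemma, and the same reuse of \Cref{maxmax} for the compression error, finished by the triangle inequality. The paper likewise omits the details of the first step by citing \cite{QiuWenlin2023AFEG}, so no substantive difference remains.
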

The following three lemmas are very useful in our proof process.
\begin{lemma}\label{bdf2-sum}
	\textup{\cite[Lemma 4.2]{QiuWenlin2020Atta}}
	For any $w^n \in V_h, 1 \leq n \leq N$, it follows that
	\begin{align*}
		\Delta t(\partial_t^+ w^1, w^1)+\Delta t \sum_{n=2}^N (D_t^{(2)} w^n, w^n )\geq \frac{3}{4}\|w^N\|^2-\frac{1}{4}\left(\|w^{N-1}\|^2+\|w^1\|^2+\|w^0\|^2\right).
	\end{align*}
\end{lemma}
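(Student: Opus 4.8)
The plan is to reduce the entire inequality to the classical G-stability (polarization) identity for the two-step backward differentiation operator, after which only telescoping and one application of Young's inequality remain. First I would rewrite the BDF2 quotient in three-point form, observing that $D_t^{(2)} w^n = \frac{3}{2}\partial_t^+ w^n - \frac{1}{2}\partial_t^+ w^{n-1} = \frac{3w^n - 4w^{n-1} + w^{n-2}}{2\Delta t}$, so that $\Delta t(D_t^{(2)} w^n, w^n) = \frac{1}{2}(3w^n - 4w^{n-1} + w^{n-2}, w^n)$. This converts the scaled difference quotient into a bilinear expression amenable to algebraic manipulation.

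The heart of the argument is to establish, for each $n \ge 2$, the identity
\begin{align*}
	2(3w^n - 4w^{n-1} + w^{n-2}, w^n) &= \|w^n\|^2 - \|w^{n-1}\|^2 + \|2w^n - w^{n-1}\|^2 \\
	&\quad - \|2w^{n-1} - w^{n-2}\|^2 + \|w^n - 2w^{n-1} + w^{n-2}\|^2,
\end{align*}
which I would verify by expanding every squared norm on the right and matching the coefficients of $\|w^n\|^2$, $(w^n,w^{n-1})$, and $(w^n,w^{n-2})$ against the left side. Once this is in place, both the pair $\|w^n\|^2 - \|w^{n-1}\|^2$ and the pair built from $g^n = 2w^n - w^{n-1}$ telescope upon summation, while the final term $\|w^n - 2w^{n-1} + w^{n-2}\|^2$ is nonnegative and can simply be discarded.

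Next I would sum $\Delta t(D_t^{(2)} w^n, w^n)$ from $n=2$ to $N$. Telescoping and dropping the nonnegative remainder yields
\[
	\Delta t\sum_{n=2}^N (D_t^{(2)} w^n, w^n) \ge \frac{1}{4}\left(\|w^N\|^2 - \|w^1\|^2 + \|2w^N - w^{N-1}\|^2 - \|2w^1 - w^0\|^2\right).
\]
Separately, the leading term is handled by the elementary polarization identity $\Delta t(\partial_t^+ w^1, w^1) = (w^1 - w^0, w^1) = \frac{1}{2}(\|w^1\|^2 - \|w^0\|^2 + \|w^1 - w^0\|^2)$.

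Finally I would assemble the two contributions and split the boundary terms into a terminal part and an initial part. For the terminal data, expanding $\|2w^N - w^{N-1}\|^2$ and applying Young's inequality to the cross term $(w^N, w^{N-1})$ gives $\frac{1}{4}\|w^N\|^2 + \frac{1}{4}\|2w^N - w^{N-1}\|^2 \ge \frac{3}{4}\|w^N\|^2 - \frac{1}{4}\|w^{N-1}\|^2$, which produces the required leading coefficient $\frac{3}{4}$. For the initial data, a direct expansion shows that the collection $\frac{1}{2}\|w^1\|^2 - \frac{1}{2}\|w^0\|^2 + \frac{1}{2}\|w^1 - w^0\|^2 - \frac{1}{4}\|w^1\|^2 - \frac{1}{4}\|2w^1 - w^0\|^2$ equals exactly $-\frac{1}{4}\|w^1\|^2 - \frac{1}{4}\|w^0\|^2$, the cross terms $(w^1,w^0)$ cancelling. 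Combining these recovers the stated bound. The only genuine difficulty is discovering and verifying the G-stability identity above; every subsequent step is a telescoping sum, the discarding of a nonnegative square, and a single Young's inequality, so no real obstacle remains once that identity is secured.
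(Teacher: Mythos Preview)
Your proof is correct: the G-stability identity you state for the BDF2 combination checks out coefficient by coefficient, the telescoping and the discarded nonnegative square are handled cleanly, and both the terminal Young's inequality step and the exact cancellation in the initial boundary terms are verified as claimed. The paper itself does not supply a proof of this lemma but simply cites it from \cite{QiuWenlin2020Atta}, so there is no in-paper argument to compare against; your approach is the standard G-stability proof for BDF2 and is exactly what one would expect behind that citation.
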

\begin{lemma}\label{kernel-positive}
	\textup{\cite[Lemma 8]{QiuWenlin2023AFEG}} 
	Let $\mathcal{X}_p^{(\alpha, \lambda)}$ be the coefficients given above. Then, for any $N \in \mathbb{Z}_{+}$ and $
	\left[\widehat{v}^0, \widehat{v}^1, \ldots, \widehat{v}^N\right] \in \mathbb{R}^{N+1},$
	we have
	\begin{align}\label{positive-definite}
		\sum_{n=0}^N\left(\sum_{p=0}^n \mathcal
		X_p^{(\alpha, \lambda)} \widehat{v}^{n-p}\right) \widehat{v}^n \geq 0.
	\end{align}
\end{lemma}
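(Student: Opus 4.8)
The plan is to reinterpret $\{\mathcal X_p^{(\alpha,\lambda)}\}$ as the Taylor coefficients of an explicit analytic generating function and then reduce the claimed nonnegativity to a scalar positivity condition for that function on the unit circle. First I would compute the generating function. Using the binomial identity $\sum_{s\ge0}\sigma_s^{(\alpha)}\zeta^s=(1-\zeta)^{-\alpha}$ together with the convolution structure $\mathcal X_n^{(\alpha,0)}=(\tfrac32)^{-\alpha}\sum_{s=0}^n 3^{-s}\sigma_s^{(\alpha)}\sigma_{n-s}^{(\alpha)}$, the generating function of $\{\mathcal X_n^{(\alpha,0)}\}$ factors as $(\tfrac32)^{-\alpha}(1-\zeta/3)^{-\alpha}(1-\zeta)^{-\alpha}=\delta(\zeta)^{-\alpha}$, where $\delta(\zeta)=\tfrac32-2\zeta+\tfrac12\zeta^2=\tfrac12(1-\zeta)(3-\zeta)$ is precisely the BDF2 characteristic polynomial. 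Since $\mathcal X_n^{(\alpha,\lambda)}=e^{-\lambda t_n}\mathcal X_n^{(\alpha,0)}$ with $t_n=n\Delta t$, the substitution $\zeta\mapsto\mu\zeta$ with $\mu=e^{-\lambda\Delta t}\in(0,1]$ yields
\[
F(\zeta):=\sum_{p\ge0}\mathcal X_p^{(\alpha,\lambda)}\zeta^p=\delta(\mu\zeta)^{-\alpha}.
\]

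Next I would express the quadratic form through this symbol. Writing $\hat v(\theta)=\sum_{n=0}^N\widehat v^{\,n}e^{in\theta}$ and using the autocorrelation/Parseval identity, the lower-triangular convolution form satisfies
\[
\sum_{n=0}^N\Bigl(\sum_{p=0}^n\mathcal X_p^{(\alpha,\lambda)}\widehat v^{\,n-p}\Bigr)\widehat v^{\,n}=\frac{1}{2\pi}\int_0^{2\pi}\operatorname{Re}F(e^{i\theta})\,|\hat v(\theta)|^2\,{\rm d}\theta.
\]
Equivalently, the symmetric part of the associated real Toeplitz matrix has symbol $\operatorname{Re}F(e^{i\theta})$, so it suffices to prove $\operatorname{Re}F(e^{i\theta})\ge0$ and then invoke the standard fact that a nonnegative symbol produces positive semidefinite Toeplitz sections of every order.

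The remaining scalar step is to verify $\operatorname{Re}\,\delta(\mu e^{i\theta})^{-\alpha}\ge0$. Since $w^{-\alpha}=|w|^{-\alpha}e^{-i\alpha\arg w}$, we have $\operatorname{Re}w^{-\alpha}=|w|^{-\alpha}\cos(\alpha\arg w)\ge0$ whenever $|\arg w|\le\pi/(2\alpha)$; because $\alpha\in(0,1)$ this is implied by the weaker condition $|\arg w|\le\pi/2$, i.e. by $\operatorname{Re}w\ge0$. A direct computation with $w=\delta(\mu e^{i\theta})$ gives
\[
\operatorname{Re}\,\delta(\mu e^{i\theta})=(\mu\cos\theta-1)^2+\tfrac12(1-\mu^2)\ge0,
\]
using $\mu\in(0,1]$, which closes the argument.

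I expect the main obstacle to be the rigorous justification of the Parseval representation in the Abel case $\lambda=0$: there $\mu=1$ and $\delta(\mu e^{i\theta})$ vanishes at $\theta=0$, so $F$ has a singularity on the circle and the symbol $\operatorname{Re}F$ is unbounded. Near $\theta=0$ the singularity behaves like $|\theta|^{-\alpha}$ with $\alpha\in(0,1)$ and is therefore integrable, so the integral still converges against the trigonometric polynomial $|\hat v|^2$; making this precise requires either a limiting argument with radius $r\uparrow1$ (where $\operatorname{Re}F(re^{i\theta})\ge0$ can be verified for all $r\le1$ by the same real-part estimate applied to $\mu r$) or the $L^1$ version of the nonnegative-symbol implies positive-semidefinite-Toeplitz theorem. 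All remaining manipulations, including the generating-function factorization and the Parseval identity, are routine.
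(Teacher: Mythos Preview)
The paper does not supply a proof of this lemma: it is stated as a direct citation of \cite[Lemma~8]{QiuWenlin2023AFEG} and used as a black box in \Cref{singular-isvd-error}. There is therefore nothing in the paper's argument to compare against.

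Your proof, on the other hand, is correct and is in fact the canonical route to such discrete positivity statements for BDF-based convolution quadrature. The generating-function computation is right: with $\sigma_s^{(\alpha)}$ the negative-binomial coefficients and $\mathcal X_n^{(\alpha,0)}=(3/2)^{-\alpha}\sum_{s=0}^n 3^{-s}\sigma_s^{(\alpha)}\sigma_{n-s}^{(\alpha)}$ one gets exactly $\sum_{p\ge0}\mathcal X_p^{(\alpha,0)}\zeta^p=\delta(\zeta)^{-\alpha}$ for the BDF2 symbol $\delta(\zeta)=\tfrac32-2\zeta+\tfrac12\zeta^2$, and the damping $e^{-\lambda t_n}$ is precisely the rescaling $\zeta\mapsto\mu\zeta$. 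Your identification of the lower-triangular convolution form with the Toeplitz quadratic form of symbol $\operatorname{Re}F(e^{i\theta})$ is also correct, as is the elementary check $\operatorname{Re}\delta(\mu e^{i\theta})=(\mu\cos\theta-1)^2+\tfrac12(1-\mu^2)\ge0$, which keeps $\delta(\mu e^{i\theta})$ in the closed right half-plane and hence $|\arg\delta|\le\pi/2$, yielding $\operatorname{Re}\,\delta^{-\alpha}\ge0$ for $\alpha\in(0,1)$.

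The only genuine care point is the one you already flagged: when $\lambda=0$ (so $\mu=1$) the symbol has an integrable $|\theta|^{-\alpha}$ singularity at $\theta=0$. Your proposed fix---prove the inequality for the rescaled coefficients $\mathcal X_p^{(\alpha,\lambda)}r^p$ with $r<1$ (equivalently, replace $\mu$ by $\mu r<1$ so the symbol is smooth) and then let $r\uparrow1$ in the \emph{finite} quadratic form, which is a polynomial in $r$---is the cleanest way to close this and requires no $L^1$-Toeplitz machinery. With that limiting step made explicit, the argument is complete.
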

\begin{lemma}\label{quadrature-error}
	\textup{\cite[Lemma 7]{QiuWenlin2023AFEG}}
	If $\varphi \in C^1([0, T])$ and $\varphi^{\prime \prime}$ is continuous and integrable on $(0, T]$, then, for $\alpha \in(0,1)$ and $\lambda \in[0, \infty)$, we have
	\begin{align*}
		\left|\hat{\varepsilon}_k^{(\alpha, \lambda)}(\varphi)\left(t_n\right)\right| \leq & C e^{-\lambda t_n}\left[\Delta t^2\left|\varphi^{\prime}(0)\right| t_n^{\alpha-1}+\Delta t^{\alpha+1} \int_{t_{n-1}}^{t_n}\left|(p \varphi)^{\prime \prime}(s)\right| d s\right. \\
		& \left.+\Delta t^2 \int_0^{t_{n-1}}\left(t_n-s\right)^{\alpha-1}\left|(p \varphi)^{\prime \prime}(s)\right| d s\right], \quad n=1,2, \ldots, N,
	\end{align*}
	where 
	\begin{align*}
		p(t)=e^{\lambda t},\ \varepsilon_k^{(\alpha,\lambda)}(\phi)(t_n)=Q_{t_n}^{(\alpha,\lambda)}(\phi)-\int_{0}^{t_n}K(t_n-s)\phi(s) {\rm d}s.
	\end{align*}
\end{lemma}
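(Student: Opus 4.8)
The plan is to prove the bound in three stages: first strip off the exponential weight by a rescaling that reduces the estimate to the pure Abel (fractional-integral) kernel, then recognise the remaining rule as a corrected second-order Lubich convolution quadrature, and finally split its error into the three regions displayed on the right-hand side. For the reduction I would use $K(t)=e^{-\lambda t}t^{\alpha-1}/\Gamma(\alpha)$ together with $p(t)=e^{\lambda t}$: writing $\varphi(s)=e^{-\lambda s}(p\varphi)(s)$ and $e^{-\lambda(t_n-s)}e^{-\lambda s}=e^{-\lambda t_n}$ gives
\begin{align*}
\int_0^{t_n}K(t_n-s)\varphi(s)\,{\rm d}s=e^{-\lambda t_n}\int_0^{t_n}\frac{(t_n-s)^{\alpha-1}}{\Gamma(\alpha)}(p\varphi)(s)\,{\rm d}s,
\end{align*}
while on the discrete side $\mathcal X_p^{(\alpha,\lambda)}=e^{-\lambda t_p}\mathcal X_p^{(\alpha,0)}$ and $\varphi(t_n-t_p)=e^{-\lambda t_n}e^{\lambda t_p}(p\varphi)(t_n-t_p)$ produce the same common factor $e^{-\lambda t_n}$, which also factors out of the starting weight (both terms of $\varpi_n^{(\alpha,\lambda)}$ carry $e^{-\lambda t_n}$, using $p(0)=1$). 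Setting $\psi:=p\varphi$ I obtain the exact identity
\begin{align*}
\hat\varepsilon_k^{(\alpha,\lambda)}(\varphi)(t_n)=e^{-\lambda t_n}\,\hat\varepsilon_k^{(\alpha,0)}(\psi)(t_n),
\end{align*}
which accounts for the prefactor $e^{-\lambda t_n}$ and reduces everything to $\lambda=0$ applied to $\psi$, with $(p\varphi)''=\psi''$ being exactly the quantity appearing inside the two integrals.

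\textbf{Generating-function set-up.} For $\lambda=0$ the weights are the Taylor coefficients of the BDF2 symbol raised to the power $-\alpha$: with $\delta(\zeta)=\tfrac32-2\zeta+\tfrac12\zeta^2=\tfrac32(1-\zeta)(1-\tfrac13\zeta)$, the expansions $(1-\zeta)^{-\alpha}=\sum_s\sigma_s^{(\alpha)}\zeta^s$ and $(1-\tfrac13\zeta)^{-\alpha}=\sum_s 3^{-s}\sigma_s^{(\alpha)}\zeta^s$, together with their Cauchy product and the symmetry $s\leftrightarrow n-s$, reproduce $\sum_{n}\mathcal X_n^{(\alpha,0)}\zeta^n=\delta(\zeta)^{-\alpha}$. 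Hence $\Delta t^{\alpha}\sum_p\mathcal X_p^{(\alpha,0)}\psi(t_n-t_p)$ is the second-order Lubich convolution quadrature for $I^\alpha\psi(t_n)=\int_0^{t_n}\frac{(t_n-s)^{\alpha-1}}{\Gamma(\alpha)}\psi(s)\,{\rm d}s$, and $\varpi_n^{(\alpha,0)}\psi(0)$ is a single starting weight chosen so that the rule is exact for constants. I would then expand $\psi(s)=\psi(0)+\psi'(0)s+r(s)$ with $r(s)=\int_0^s(s-\tau)\psi''(\tau)\,{\rm d}\tau$ (so $r(0)=r'(0)=0$ and $r''=\psi''$) and use linearity to split $\hat\varepsilon_k^{(\alpha,0)}(\psi)$ into a constant part, a linear part, and a remainder part.

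\textbf{The three regional estimates.} The constant part is annihilated by the construction of $\varpi_n^{(\alpha,0)}$. The linear part $\psi'(0)s$ convolves to $\psi'(0)t_n^{1+\alpha}/\Gamma(2+\alpha)$, whose second derivative is singular at the origin; Lubich's error representation bounds its convolution-quadrature error by $C\Delta t^2|\psi'(0)|t_n^{\alpha-1}$, which at $t_n=\Delta t$ degrades to $\Delta t^{1+\alpha}$ and away from the origin is genuinely $O(\Delta t^2)$—this is the first term on the right-hand side (the first-derivative term at the origin reducing to $\varphi'(0)$ up to the constant $C$). For the $C^2$ remainder $r$ I would compare the discrete and continuous kernels through the symbols $\delta(\zeta)^{-\alpha}$ versus $z^{-\alpha}$; on the interior $[0,t_{n-1}]$, where $t_n-s\ge\Delta t$ and the kernel is smooth, this yields the full second-order weighted bound $\Delta t^2\int_0^{t_{n-1}}(t_n-s)^{\alpha-1}|\psi''(s)|\,{\rm d}s$, whereas on the final subinterval $[t_{n-1},t_n]$ the singularity of $(t_n-s)^{\alpha-1}$ forces the weaker but still integrable estimate $\Delta t^{1+\alpha}\int_{t_{n-1}}^{t_n}|\psi''(s)|\,{\rm d}s$.

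\textbf{Main obstacle.} The delicate part is the remainder estimate near the diagonal: the weakly singular weight $(t_n-s)^{\alpha-1}$ is unbounded as $s\to t_n$, so the naive second-order local consistency of BDF2 convolution quadrature fails on $[t_{n-1},t_n]$ and must be replaced by the reduced but integrable rate $\Delta t^{1+\alpha}$. Making this rigorous requires Lubich's generating-function (equivalently contour-integral/resolvent) analysis of the difference between $\delta(\zeta)^{-\alpha}$ and $z^{-\alpha}$, together with a careful accounting of how the single starting weight $\varpi_n$ restores the $O(\Delta t^2)$ order lost at the origin. This is precisely the technical content inherited from \cite{QiuWenlin2023AFEG}, so in our setting I would invoke that analysis after carrying out the explicit $\lambda=0$ reduction above, which is the only genuinely new ingredient here.
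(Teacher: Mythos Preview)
The paper does not prove this lemma: it is stated with a bare citation to \cite[Lemma 7]{QiuWenlin2023AFEG} and followed only by a one-line remark, with no argument supplied. There is therefore no proof in the present paper to compare your proposal against.

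Your outline is the standard route for results of this type and is essentially the argument the cited reference carries out: the exponential rescaling identity $\hat\varepsilon_k^{(\alpha,\lambda)}(\varphi)(t_n)=e^{-\lambda t_n}\,\hat\varepsilon_k^{(\alpha,0)}(p\varphi)(t_n)$ (your verification of this via $\mathcal X_p^{(\alpha,\lambda)}=e^{-\lambda t_p}\mathcal X_p^{(\alpha,0)}$ and the factorisation of $\varpi_n^{(\alpha,\lambda)}$ is correct), followed by the Lubich generating-function analysis of the BDF2 convolution quadrature for the Abel kernel applied to $\psi=p\varphi$, with the constant/linear/remainder Taylor splitting producing the three displayed terms. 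You also correctly flag that the hard analytic work near the diagonal is inherited from the cited source rather than redone here.

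One point to tighten: after the reduction the linear coefficient is $\psi'(0)=(p\varphi)'(0)=\lambda\varphi(0)+\varphi'(0)$, not $\varphi'(0)$. Your parenthetical ``reducing to $\varphi'(0)$ up to the constant $C$'' glosses over the extra $\lambda\varphi(0)$ contribution, which does not automatically collapse into $|\varphi'(0)|$; it has to be handled separately (either absorbed into a $\lambda$-dependent constant together with a bound on $|\varphi(0)|$ coming from the starting-weight correction, or shown to cancel by the specific design of $\varpi_n$). This is a bookkeeping issue rather than a structural gap, but as written that step is not justified.
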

\begin{remark}
	From \Cref{quadrature-error}, it follows that if $\left|\varphi^{\prime}(0)\right|=0$ and $\left|\varphi^{\prime \prime}(t)\right| \leq C$ for $t \in[0, T]$, then the quadrature error is $\mathcal{O}(\Delta t^2)$.
\end{remark}
\begin{lemma}\label{singular-lemme-error-fem}
	Suppose $u(t)\in H_0^{k+1}(\Omega) $ and $u_h^n$ are the solutions of \eqref{singular-integro-differential equation} and \eqref{singular-numerical scheme}, respectively. Assume that  $ \left\| u_0-u_h^0\right\|\le Ch^{k+1}\left\| u_0\right\|_{k+1}   $.  If $ \mathscr{B} $ is nonnegative symmetric and $ u $ satisfies the following regularity conditions :
	\begin{align*}
		\left\|u_t\right\| \leq C, \quad\left\|u_{t t}\right\| \leq C e^{-\lambda t} t^{\alpha-1}, \quad\left\|u_{t t t}\right\| \leq C e^{-\lambda t} t^{\alpha-2}, \quad t \rightarrow 0^{+},
	\end{align*}
	then 
	\begin{align*}
		\max _{1 \leq n \leq N}\left\|u^n-u_h^n\right\| \leq & C(h^{k+1}+\Delta t^{1+\alpha}),
	\end{align*}
	where $ C $ denotes a positive constant, independent of $ h $ and $ \Delta t $.
\end{lemma}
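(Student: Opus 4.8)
The plan is to follow the classical ``split-and-test'' strategy for Galerkin approximations of integro-differential equations, with \Cref{bdf2-sum,kernel-positive,quadrature-error} serving as the three main tools. First I would write
\[
u(t_n) - u_h^n = \rho^n + \theta^n, \qquad \rho^n = u(t_n) - R_h u(t_n), \qquad \theta^n = R_h u(t_n) - u_h^n \in V_h,
\]
where $R_h$ is the Ritz--Volterra projection associated with the pair $(\mathscr A, K\mathscr B)$; it is well defined because $\mathscr B$ nonnegative symmetric and $K \ge 0$ make the underlying form coercive, and standard approximation estimates give $\|\rho^n\| \le C h^{k+1}$ and $\|\partial_t^+\rho^n\| = \Delta t^{-1}\|\int_{t_{n-1}}^{t_n}\rho_t\,{\rm d}s\| \le C h^{k+1}$ under the stated regularity. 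Thus the projection part already has the claimed order, and it remains to bound $\theta^n$.

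Next I would subtract the scheme \eqref{singular-numerical scheme} from the consistently discretized weak form of \eqref{singular-integro-differential equation}, using the defining identity of $R_h$ to cancel $\mathscr A(\rho^n,v_h)$ together with the \emph{exact} convolution $\int_0^{t_n}K(t_n-s)\mathscr B(\rho(s),v_h)\,{\rm d}s$. This yields an error equation of the form
\[
(D_t^{(2)}\theta^n, v_h) + \mathscr A(\theta^n, v_h) + \Delta t^{\alpha}\sum_{p=0}^n \mathcal X_p^{(\alpha,\lambda)}\mathscr B(\theta^{n-p}, v_h) + \varpi_n^{(\alpha,\lambda)}\mathscr B(\theta^0, v_h) = \langle\mathcal R^n, v_h\rangle, \quad n \ge 2,
\]
and the analogous backward-Euler identity at $n=1$, where $\mathcal R^n$ collects (i) the BDF2 time-consistency defect of $u$, (ii) the convolution-quadrature defect $\hat\varepsilon_n^{(\alpha,\lambda)}\!\big(\mathscr B(u(\cdot),v_h)\big)$, and (iii) the residual coming from $\rho$, namely $(\partial_t^+\rho^n,v_h)$ plus the quadrature defect of $\mathscr B(\rho(\cdot),v_h)$ that survives after $R_h$ removes the exact convolution. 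By \Cref{quadrature-error} and the regularity assumptions, (ii) sums to $\mathcal O(\Delta t^{1+\alpha})$ (the $t_n^{\alpha-1}$-weighted bounds there have finite discrete sums since $\int_0^T t^{\alpha-1}\,{\rm d}t < \infty$), (iii) is $\mathcal O(h^{k+1})$ with its quadrature part even smaller, of order $h^{k+1}\Delta t^{1+\alpha}$, and (i) is treated by the usual BDF2 estimate --- the single backward-Euler start at $n=1$ being included precisely to absorb the strongest singularity $u_{ttt}\sim t^{\alpha-2}$ near $t=0$.

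Then I would run the energy argument: test the $n=1$ equation with $\theta^1$ and the $n\ge 2$ equations with $\theta^n$, multiply by $\Delta t$, and sum from $n=1$ to an arbitrary $N' \le N$. \Cref{bdf2-sum} bounds the time-derivative terms from below by $\tfrac{3}{4}\|\theta^{N'}\|^2 - \tfrac{1}{4}\big(\|\theta^{N'-1}\|^2 + \|\theta^1\|^2 + \|\theta^0\|^2\big)$; the stiffness terms give $\sum_n \Delta t\,\|\theta^n\|_a^2 \ge 0$; and, since $\mathscr B$ is nonnegative symmetric (hence a semi-inner product), applying \Cref{kernel-positive} componentwise yields $\sum_n \Delta t^{\alpha}\sum_{p=0}^n \mathcal X_p^{(\alpha,\lambda)}\mathscr B(\theta^{n-p},\theta^n) \ge 0$, the $\varpi_n$ term being zero once $u_h^0$ is taken as the projection so that $\theta^0 = 0$ (compatible with $\|u_0-u_h^0\|\le Ch^{k+1}\|u_0\|_{k+1}$), or otherwise dominated by $\|\theta^0\|^2 + \sum_n\Delta t|\varpi_n|\,\|\theta^n\|^2$. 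The right-hand side is bounded by Cauchy--Schwarz and Young's inequality, absorbing the $\mathscr B$-type defects into $\epsilon\sum_n\Delta t\|\theta^n\|_a^2$ on the left, and a discrete Gr\"onwall inequality handles the residual $\|\theta^n\|^2$ terms. Maximizing over $N'$ gives $\max_{1\le n\le N}\|\theta^n\| \le C(h^{k+1}+\Delta t^{1+\alpha})$, and the lemma follows by the triangle inequality.

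\textbf{Main obstacle.} The delicate part is (i)--(ii): the BDF2 and convolution-quadrature consistency errors are \emph{not} uniform in $n$ near $t = 0$, where $u_{tt}$ and $u_{ttt}$ blow up. The resolution is that the bounds in \Cref{quadrature-error} carry integrable weights $t_n^{\alpha-1}$ (and $\Delta t^{1+\alpha}$-scaled local integrals of $(e^{\lambda\cdot}\varphi)''$), so that the summed contribution $\sum_n \Delta t\cdot(\text{defect}_n)\cdot\|\theta^n\|$ is still $\mathcal O(\Delta t^{1+\alpha})$; making this rigorous --- and likewise bounding the BDF2 truncation error with the backward-Euler first step --- is the technical heart, and proceeds as in \cite{QiuWenlin2023AFEG}.
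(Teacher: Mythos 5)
The paper does not actually write out a proof of this lemma; it states only that the argument is ``similar to \cite{QiuWenlin2023AFEG}'' and omits it, and your outline is exactly that argument: a Ritz--Volterra splitting, the error equation for $\theta^n$, an energy estimate using \Cref{bdf2-sum} for the BDF2 terms and \Cref{kernel-positive} for the nonnegativity of the convolution-quadrature sum, and consistency via \Cref{quadrature-error} with the singular weights absorbed by their integrability. Your proposal is correct and coincides with the intended (omitted) proof, including the correct identification of the startup/near-$t=0$ consistency issue as the technical heart.
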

The proof of \Cref{singular-lemme-error-fem} is similar to  \cite{QiuWenlin2023AFEG}, hence we omit it there.
\begin{lemma}\label{singular-isvd-error}
	Let $ \widehat{u}_h^n $ and $u_h^n$ be the solutions of $ \eqref{singular-fem-svd-scheme} $ and \eqref{singular-numerical scheme}, respectively. Given Assumption \ref{A2}, suppose  \textup{\texttt{tol}} to be applied to both $p$-truncation and singular value truncation and  $ T_{sv} $ represent the total number of times the singular value truncation is applied, then there exists a positive $ C $, independent of $ h $ and $ \Delta t $, such that 
	\begin{align*}
		\|\widehat{u}_h^N-u_h^N\|\le C\sqrt{T\sigma(A)}(T_{sv}+1)\mathtt{tol}.
	\end{align*}
\end{lemma}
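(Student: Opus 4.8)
The plan is to mirror the two-stage argument of the non-singular case (\Cref{error_uh_uhat_another} combined with \Cref{maxmax}), but with the Crank--Nicolson energy identity replaced by the BDF2 tools \Cref{bdf2-sum} and the positive-definiteness of the convolution weights \Cref{kernel-positive} taking the place of the smallness condition in \Cref{A2}. Note first that \Cref{maxmax} is purely a statement about the incremental SVD compression and does not use the PDE, so it applies verbatim here (and, by the same telescoping argument, also to the $j=0$ column); it will supply the bound on the compression error.

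First I would introduce $\widehat e_n = u_h^n-\widehat u_h^n$ and $\widetilde e_{n,j}=u_h^j-\widetilde u_h^{n,j}$. Since both schemes use the same $u_h^0$ and the first-step equations in \eqref{singular-numerical scheme} and \eqref{singular-fem-svd-scheme} coincide, we have $\widehat e_0=\widehat e_1=0$, and the $\varpi_n$-terms cancel. Writing the standard memory term as $\Delta t^{\alpha}\mathcal X_0^{(\alpha,\lambda)}\mathscr B(u_h^n,v_h)+\Delta t^{\alpha}\sum_{p=1}^{n}\mathcal X_p^{(\alpha,\lambda)}\mathscr B(u_h^{n-p},v_h)$ and subtracting \eqref{singular-fem-svd-scheme}, for $2\le n\le N$ and $v_h\in V_h$ the error equation becomes
\begin{align*}
	(D_t^{(2)}\widehat e_n,v_h)+\mathscr A(\widehat e_n,v_h)+\Delta t^{\alpha}\mathcal X_0^{(\alpha,\lambda)}\mathscr B(\widehat e_n,v_h)+\Delta t^{\alpha}\sum_{p=1}^{n}\mathcal X_p^{(\alpha,\lambda)}\mathscr B(\widetilde e_{n-1,n-p},v_h)=0,
\end{align*}
using $u_h^{n-p}-\widetilde u_h^{n-1,n-p}=\widetilde e_{n-1,n-p}$. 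I would then test with $v_h=\Delta t\,\widehat e_n$ and sum from $n=2$ to an arbitrary $m\le N$. \Cref{bdf2-sum} gives $\Delta t\sum_{n=2}^{m}(D_t^{(2)}\widehat e_n,\widehat e_n)\ge\tfrac34\|\widehat e_m\|^2-\tfrac14\|\widehat e_{m-1}\|^2$ (using $\widehat e_0=\widehat e_1=0$), and the $\mathscr A$-term yields $\Delta t\sum\|\widehat e_n\|_a^2$.

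The memory term is the heart of the argument. Split $\widetilde e_{n-1,n-p}=\widehat e_{n-p}+\delta_{n-1,n-p}$ with $\delta_{n-1,n-p}:=\widehat u_h^{n-p}-\widetilde u_h^{n-1,n-p}$. Merging the $\mathcal X_0$-contribution with the $\widehat e_{n-p}$-part and extending the outer sum harmlessly to $n=0$ (the $n=0,1$ terms vanish), one gets $\Delta t^{1+\alpha}\sum_{n=0}^{m}\sum_{p=0}^{n}\mathcal X_p^{(\alpha,\lambda)}\mathscr B(\widehat e_{n-p},\widehat e_n)$, which is nonnegative: since $\mathscr B$ is nonnegative symmetric we may simultaneously diagonalize it against the $L^2$ inner product, $\widehat e_n=\sum_i c_{n,i}\phi_i$ with $\mathscr B(\phi_i,\phi_j)=\mu_i\delta_{ij}$, $\mu_i\ge0$, so this quantity equals $\sum_i\mu_i\,\Delta t^{1+\alpha}\sum_n\sum_p\mathcal X_p^{(\alpha,\lambda)}c_{n-p,i}c_{n,i}\ge0$ by the scalar inequality \eqref{positive-definite} applied componentwise; it drops to the good side. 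The leftover $\Delta t^{1+\alpha}\sum_{n=2}^{m}\sum_{p=1}^{n}\mathcal X_p^{(\alpha,\lambda)}\mathscr B(\delta_{n-1,n-p},\widehat e_n)$ is the compression error: by \Cref{maxmax}, $\|\delta_{n-1,n-p}\|_a\le(T_{sv}+1)\sqrt{\sigma(A)}\,\mathtt{tol}=:E$; using $|\mathscr B(u,v)|\le c_0\|u\|_a\|v\|_a$ and the uniform bound $\Delta t^{\alpha}\sum_{p\ge1}\mathcal X_p^{(\alpha,\lambda)}\le\Delta t^{\alpha}\sum_p\mathcal X_p^{(\alpha,0)}\le C$ (the weights discretize $\int_0^{t_n}K$), this term is at most $c_0CE\,\Delta t\sum_{n=2}^{m}\|\widehat e_n\|_a$, and Young's inequality absorbs $\tfrac12\Delta t\sum\|\widehat e_n\|_a^2$ into the $\mathscr A$-term, leaving $\tfrac{T}{2}(c_0CE)^2$.

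Collecting the estimates gives, for every $m$, $\tfrac34\|\widehat e_m\|^2\le\tfrac14\|\widehat e_{m-1}\|^2+\tfrac{T}{2}(c_0CE)^2$, i.e.\ $\|\widehat e_m\|^2\le\tfrac13\|\widehat e_{m-1}\|^2+\tfrac{2T}{3}(c_0CE)^2$. Since $\widehat e_1=0$, summing this geometric recursion yields $\|\widehat e_m\|^2\le T(c_0CE)^2$ for all $m$, hence $\|\widehat u_h^N-u_h^N\|=\|\widehat e_N\|\le C\sqrt{T\sigma(A)}\,(T_{sv}+1)\,\mathtt{tol}$, which is the claim. The main obstacle is the memory term in two respects: recognizing that the diagonal part combines with the $\mathcal X_0$-term into the positive-definite quadratic form of \Cref{kernel-positive} (which needs the symmetry/nonnegativity of $\mathscr B$ and the diagonalization trick), and dealing with the $\|\widehat e_{m-1}\|^2$ remainder left by the BDF2 identity, which forces us to run a recursion rather than a single Grönwall step. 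The rest is routine bookkeeping parallel to the non-singular proof.
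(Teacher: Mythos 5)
Your proposal is correct and follows essentially the same route as the paper: the same error equation, \Cref{bdf2-sum} for the BDF2 term, \Cref{kernel-positive} together with the nonnegative symmetry of $\mathscr{B}$ to discard the convolution quadratic form, and \Cref{maxmax} plus Young's inequality for the compression remainder. The only cosmetic differences are that the paper disposes of the leftover $\|\widehat e_{N-1}\|^2$ by evaluating the estimate at the index maximizing $\|\widehat e_n\|$ rather than by your geometric recursion, and that it cites the literature for the bilinear-form version of the positive-definiteness where you diagonalize $\mathscr{B}$ explicitly.
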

\begin{proof}
	We recall that $ \widehat{u}_h^n $ and $ u_h^n $ satisfying equations \eqref{singular-fem-svd-scheme} and $ \eqref{singular-numerical scheme} $, introducing the notations 
	\begin{align*}
		\widehat{e}_n=u_h^n-\widehat{u}_h^n,  \ \widetilde{e}_{n,j}=u_h^j-\widetilde{u}_h^{n,j},
	\end{align*}  
	and subtract \eqref{singular-fem-svd-scheme} from \eqref{singular-numerical scheme} to obtain 
	\begin{subequations}
		\begin{align}\label{eq-error-isvd-fem1}
			(\partial_t^+\widehat{e}_1,v_h)+\mathscr{A}(\widehat{e}_1,v_h)+\Delta t^{\alpha}\sum_{p=0}^{1}\mathcal{X}_p^{(\alpha,\lambda)}\mathscr{B}(\widehat{e}_{1-p},v_h)+\varpi_1\mathscr{B}(\widehat{e}_0,v_h)&=0,\\
			\begin{split}
				\label{eq-error-isvd-fem2}
				(D_t^{(2)}\widehat{e}_n,v_h)+\mathscr{A}(\widehat{e}_n,v_h)+\Delta t^{\alpha}\sum_{p=1}^{n}\mathcal{X}_{p}^{(\alpha,\lambda)}\mathscr{B}(\widetilde{e}_{n-1,n-p},v_h)&\\
				+\Delta t^{\alpha}\mathcal{X}_0^{(\alpha,\lambda)}\mathscr{B}(\widehat{e}_n,v_h)+\varpi_n^{(\alpha,\lambda)}\mathscr{B}(\widehat e_0,v_h)&=0.
			\end{split}
		\end{align}
	\end{subequations}
	Taking $ v_h=\Delta t\widehat{e}_1 $ in \eqref{eq-error-isvd-fem1} and $ v_h=\Delta t\widehat{e}_n $ in \eqref{eq-error-isvd-fem2}, summing \eqref{eq-error-isvd-fem2} from $ n=2 $ to $ n=N $ with  \eqref{eq-error-isvd-fem1}, it follows that 
	\begin{align*}
		&\Delta t(\partial_t^+\widehat{e}_1,\widehat{e}_1)
		+\Delta t\sum_{n=2}^{N}(D_t^{(2)}\widehat{e}_n,\widehat{e}_n)
		+\Delta t\sum_{n=1}^{N}\|\widehat{e}_n\|_a^2\\
		&\quad +\Delta t^{1+\alpha}\sum_{n=1}^{N}\sum_{p=0}^{n}\mathcal{X}_p^{(\alpha,\lambda)}\mathscr B(\widehat{e}_{n-p},\widehat{e}_n)\\
		&=-\Delta t\varpi_1^{(\alpha,\lambda)}\mathscr{B}(\widehat{e}_0,\widehat{e}_1)-\Delta t\sum_{n=2}^{N}\varpi_n^{(\alpha,\lambda)}\mathscr{B}(\widehat{e}_0,\widehat{e}_n)\\
		&\quad +\Delta t^{1+\alpha}\sum_{n=2}^{N}\sum_{p=1}^{n}\mathcal{X}_p^{(\alpha,\lambda)}\mathscr{B}(\widehat{e}_{n-p}-\widetilde{e}_{n-1,n-p},\widehat{e}_n).
	\end{align*}
	By  \Cref{bdf2-sum}  and using the fact $ \widehat{e}_0=0 $, it is obvious that 
	\begin{align*}
		&\frac{3}{4}\|\widehat{e}_N\|^2-\frac{1}{4}(\|\widehat{e}_{N-1}\|^2+\|\widehat{e}_1\|^2+\|\widehat{e}_0\|^2)+\Delta t\sum_{n=1}^{N}\|\widehat{e}_n\|_a^2\\
		&\quad +\Delta t^{1+\alpha}\sum_{n=1}^{N}\sum_{p=0}^{n}\mathcal{X}_p^{(\alpha,\lambda)}\mathscr{B}(\widehat{e}_{n-p},\widehat{e}_n)\\
		&\le -\Delta t\varpi_n^{(\alpha,\lambda)}\mathscr{B}(\widehat{e}_0,\widehat{e}_n)
		+\Delta t^{1+\alpha}\sum_{n=2}^{N}\sum_{p=1}^{n}\mathcal{X}_p^{(\alpha,\lambda)}\mathscr{B}(\widehat{e}_{n-p}-\widetilde{e}_{n-1,n-p},\widehat{e}_n)\\
		&\le c_0\Delta t^{1+\alpha}\sum_{n=2}^{N}\sum_{p=1}^{n}\mathcal{X}_p^{(\alpha,\lambda)}
		\|\widehat{e}_{n-p}-\widetilde{e}_{n-1,n-p}\|_a\|\widehat{e}_n\|_a.
	\end{align*}
	Selecting an integer $1\le  k\le N $ such that $\displaystyle \|\widehat{e}_k\|=\max_{1 \leq n \leq N}\|\widehat{e}_n\| $, then inequality above becomes
	\begin{align*}
		&\frac{1}{4}\|\widehat{e}_k\|^2+\Delta t\sum_{n=1}^{k}\|\widehat{e}_n\|_a^2+\Delta t^{1+\alpha}\sum_{n=1}^{k}\sum_{p=0}^{n}\mathcal{X}_p^{(\alpha,\lambda)}\mathscr{B}(\widehat{e}_{n-p},\widehat{e}_n)\\
		&\le\frac{3}{4}\|\widehat{e}_k\|^2-\frac{1}{4}(\|\widehat{e}_{k-1}\|^2+\|\widehat{e}_1\|^2)+\Delta t\sum_{n=1}^{k}\|\widehat{e}_n\|_a^2\\
		&\quad +\Delta t^{1+\alpha}\sum_{n=1}^{k}\sum_{p=0}^{n}\mathcal{X}_p^{(\alpha,\lambda)}\mathscr{B}(\widehat{e}_{n-p},\widehat{e}_n)\\
		&\quad\le c_0\Delta t^{1+\alpha}\sum_{n=2}^{N}\sum_{p=1}^{n}\mathcal{X}_p^{(\alpha,\lambda)}
		(\frac{1}{2\varepsilon}	\|\widehat{e}_{n-p}-\widetilde{e}_{n-1,n-p}\|_a^2
		+\frac{\varepsilon}{2}\|\widehat{e}_n\|_a^2),
	\end{align*}
	for some $ \varepsilon\in (0,1). $
	
	From \cite{MR1686149,ChenC.1992FEAo} we can follow  if  $ \mathscr{B} $ is nonnegative symmetric and $ \mathcal{X}_p^{(\alpha,\lambda)} $ satisfies positive definite condition \eqref{positive-definite} that 
	\begin{align*}
		\sum_{n=1}^{k}\sum_{p=0}^{n}\mathcal{X}_p^{(\alpha,\lambda)}\mathscr{B}(\widehat{e}_{n-p},\widehat{e}_n)\ge 0.
	\end{align*}
	From \cite[Lemma 6]{QiuWenlin2023AFEG}, we know $ \mathcal{X}_n^{(\alpha,\lambda)}=\mathcal{O}(n^{\alpha-1}) $, thus we can choose some $ \varepsilon\in (0,1) $ such that 
	\begin{align}\label{error-max-max}
		\begin{split}
			&\frac{1}{4}\|\widehat{e}_k\|^2+\Delta t\sum_{n=1}^{k}\|\widehat{e}_n\|_a^2+\Delta t^{1+\alpha}\sum_{n=1}^{k}\sum_{p=0}^{n}\mathcal{X}_p^{(\alpha,\lambda)}\mathscr{B}(\widehat{e}_{n-p},\widehat{e}_n)\\
			&\qquad\le CT\max_{2\le n\le N}\max_{1\le p\le n}\|\widehat{e}_{n-p}-\widetilde{e}_{n-1,n-p}\|_a^2+\frac{1}{2}\Delta t\sum_{n=1}^{k}\|\widehat{e}_n\|_a^2.
		\end{split}
	\end{align}
	Finally, we apply \Cref{maxmax,kernel-positive}  to \eqref{error-max-max} to  arrive  at
	\begin{align*}
		\|\widehat{e}_N\|\le C\sqrt{T\sigma(A)}(T_{sv}+1)\mathtt{tol}.
	\end{align*}
\end{proof}
Combining \Cref{singular-lemme-error-fem}  with \Cref{singular-isvd-error}, we use triangle inequality to finish  proof of \Cref{main_result 2}.
\section{Numerical experiments}\label{Numerical_experiments}
This section entails numerous numerical experiments conducted to demonstrate the efficiency and accuracy of our scheme, denoted as \eqref{ISVD_eq1}. In each experiment, we set $ \Omega=(0,1)\times (0,1) $ and $ T=1 $. We employ linear finite elements for spatial discretization and utilize second-order numerical schemes such as the Crank-Nicolson scheme and BDF2 scheme for time discretization. Our numerical experiments cover two aspects, including the convergence rate for both non-singular and singular kernels.

\begin{example}\label{example1}
	{In this particular example, we consider a scenario where the kernel is non-singular, and the time domain is uniformly partitioned. Here, $ h $ denotes the mesh size, while $ \Delta t $ represents the time step. For this case, we opt for $ h=\Delta t $, and the equation is specified as follows:}
	\begin{align*}
		u_t-\Delta u-\int_0^t K(t-s)\Delta u(s)ds=f(x,t)
	\end{align*}
	with 
	\begin{align*}
		u(x,y,t)=x(1-x)y(1-y)t,\ K(t)=\ln{(1+t)}.
	\end{align*}
	\begin{table}[h]
		\centering
		\begin{tabular}{c|c|c|c|c|c}
			\Xhline{1pt}
			$\frac{h}{\sqrt{2}}$  & $ \|u_h^N-u(T)\| $ & rate & $ \|\widehat u_h^N-u(T)\| $ & rate & $ \|u_h^N-\widehat{u}_h^N\| $\\ \Xhline{1pt}
			$ 1/2^3 $  & 1.38E-03      & -   & 1.38E-03     & -   & 3.18E-17       \\ \hline
			$ 1/2^4 $  & 3.50E-04     & 1.98    & 3.50E-04      &1.96   & 2.47E-15      \\ \hline
			$ 1/2^5 $ & 8.79E-05       & 2.00   & 8.79E-05      & 1.99   & 3.20E-14      \\ \hline
			$ 1/2^6 $  & 2.20E-05     & 2.00    & 2.20E-05      & 2.00   & 3.58E-14      \\ \hline
			$ 1/2^7 $  & 5.50E-06      & 2.00    & 5.50E-06      & 2.00    & 6.87E-14       \\ \hline
			$ 1/2^8 $  & 1.37E-06       & 2.00    & 1.37E-06    & 2.00    & 2.04E-14      \\ \hline
			$ 1/2^9 $  & 3.44E-07       & 2.00    & 3.44E-07      & 2.00   & 3.73E-15     \\ \hline
			$ 1/2^{10} $ & 8.59E-08       & 2.00    & 8.59E-08      & 2.00   & 9.79E-16       \\ \Xhline{1pt}
		\end{tabular}
		\caption{The convergence rates of $\| u_h^N-u(T)\|$, $\|\widehat u_h^N-u(T) \|$  and $ \|u_h^N-\widehat{u}_h^N\| $ for $\Delta t = h$  with $ K(t)=\ln{(1+t)} $.}
		\label{new-table1}
	\end{table}
	
	In this particular example, we test our approach using $ \mathtt{tol}=10^{-12} $ as selected in our new method. We record the error in $L^2$ norm between the finite element solution, the exact solution, and the solution obtained via our new scheme and compare the convergence order of the two algorithms  in \cref{new-table1}. Furthermore, we 
	compare the wall time and memory  cost for both the finite element method and our approach  and plot four figures for intuition in \cref{figure--1}.
\end{example}
\begin{example}\label{example--2}
	
	In this example, we evaluate the performance of our new method concerning the singular kernel given by
	\begin{align*}
		K(t)=e^{-\lambda t}\frac{1}{\Gamma(\alpha)}t^{\alpha-1}.
	\end{align*}
	We utilize linear finite elements and the BDF2 scheme for spatial and time discretization, respectively. The equation under consideration is expressed as follows:
	\begin{align*}
		u_t-\Delta u-\int_0^t K(t-s)\Delta u(s)\ {\rm d}s=f,
	\end{align*}
	where
	\begin{align*}
		u=-\frac{t^{2+\alpha}}{\Gamma(3+\alpha)}e^{-\lambda t}\sin(\pi x)\sin(\pi y),
	\end{align*}
	and the source term $f$ is defined as:
	\begin{align*}
		f=\left(\frac{\lambda t^{2+\alpha}}{\Gamma(3+\alpha)}-\frac{t^{1+\alpha}}{\Gamma(2+\alpha)}-\frac{2\pi^2t^{2\alpha+2}}{\Gamma(3+\alpha)}-\frac{2\pi^2t^{\alpha+2}}{\Gamma(3+\alpha)}\right)e^{-\lambda t}\sin(\pi x)\sin(\pi y),
	\end{align*}
	with parameters
	\begin{align*}
		\alpha=0.8, \  \lambda=0.2.
	\end{align*}
	\begin{table}[H]\label{new-table 3}
		\centering
		\begin{tabular}{c|c|c|c|c|c}
			\Xhline{1pt}
			$\frac{h}{\sqrt{2}}$  & $ \|u_h^N-u(T)\| $ & rate & $ \|\widehat u_h^N-u(T)\| $ & rate & $ \|u_h^N-\widehat{u}_h^N\| $\\ \Xhline{1pt}
			$ 1/2^3 $  & 4.00E-03     & -   & 4.00E-03      & -   & 9.26E-15       \\ \hline
			$ 1/2^4 $  & 1.00E-03    & 2.00    & 1.00E-03        &2.00   & 8.55E-15     \\ \hline
			$ 1/2^5 $ & 2.61E-04      & 1.94    & 2.61E-04       & 1.94   & 1.24E-14     \\ \hline
			$ 1/2^6 $  &6.55E-05    & 2.00   & 6.55E-05      & 2.00  & 4.73E-15    \\ \hline
			$ 1/2^7 $  & 1.64E-05    & 2.00   &1.64E-05      & 2.00    & 2.67E-15      \\ \hline
			$ 1/2^8 $  & 4.10E-06       & 2.00   & 4.10E-06      &2.00 & 3.98E-16     \\ \hline
			$ 1/2^9 $  & 1.03E-06     & 2.00    & 1.03E-06      & 2.00  & 9.18E-16      \\ \hline
			$ 1/2^{10} $ & 2.57E-07      & 2.00    & 2.57E-07       & 2.00   & 7.77E-16       \\ \Xhline{1pt}
		\end{tabular}
		\caption{The convergence rates of $\| u_h^N-u(T)\|$, $\|\widehat u_h^N-u(T) \|$  and $ \|u_h^N-\widehat{u}_h^N\| $ for $\Delta t = h$  with parameters  $ \alpha=0.8,\lambda=0.2 $.}
	\end{table}
	
	Referencing \cite{QiuWenlin2023AFEG}, it is established that this particular example attains a second-order convergence rate in both space and time. Additionally, we conduct tests measuring the $ L^2 $ error between the exact solution and the solution acquired through our new approach, as well as the $ L^2 $ error between the solutions obtained via the two methods. The numerical experiments demonstrate that our new method maintains an error bound nearly equivalent to that of the standard finite element approach.
	
	Additionally, we also compare the wall time and memory  for both the finite element method and our approach for the case that kernel is singular ,  and plot two figures for intuition in \cref{figure--2}. It is evident that our new approach is more efficient, especially when the time step and mesh size are small. This suggests that our scheme performs well for large-scale problems.
	
\end{example}

\begin{figure}[h]
	\centering
	\begin{minipage}{0.49\linewidth}
		\centering
		\includegraphics[width=0.9\linewidth]{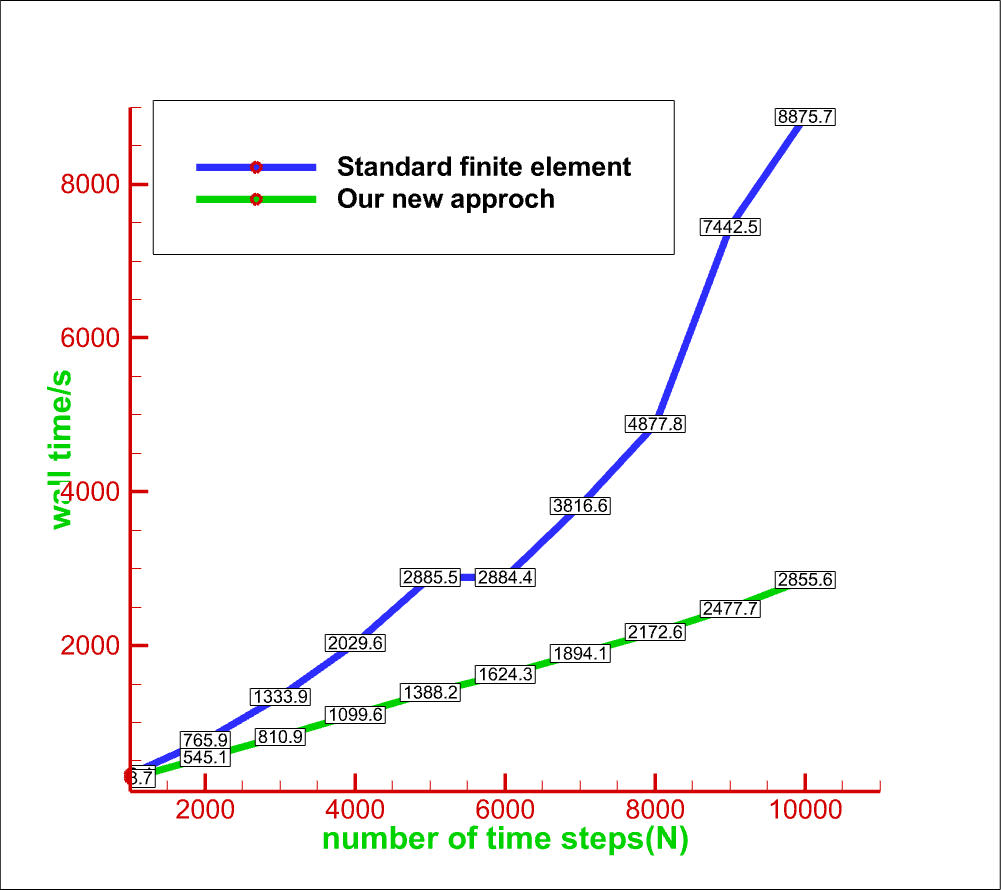}
	\end{minipage}
	%\qquad
	\begin{minipage}{0.49\linewidth}
		\centering
		\includegraphics[width=0.9\linewidth]{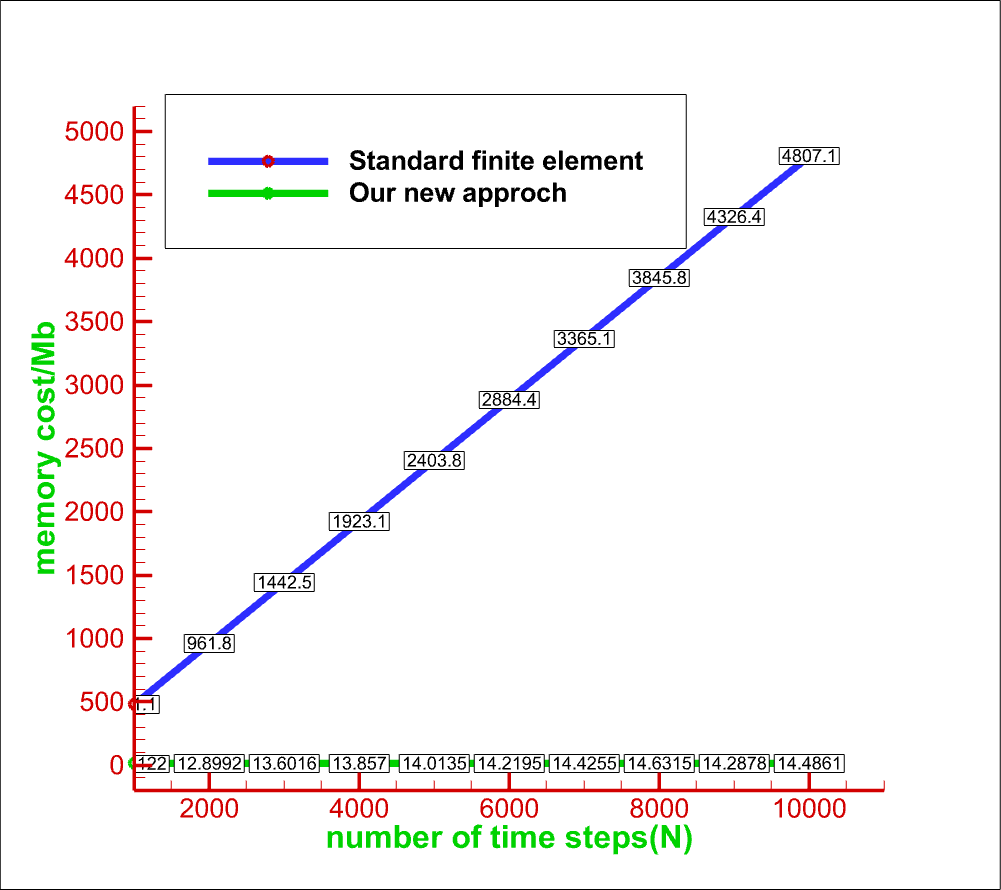}
	\end{minipage}
	\caption{A comparison of wall time and memory costs between the two algorithms is conducted for various time steps and $ K(t)=\log {(1+t)} $, specifically when $h=1/250$ and $\Delta t=10^{-4}$.}
	\label{figure--1}
\end{figure}
\begin{figure}[h]
	\centering
	\begin{minipage}{0.49\linewidth}
		\centering
		\includegraphics[width=0.9\linewidth]{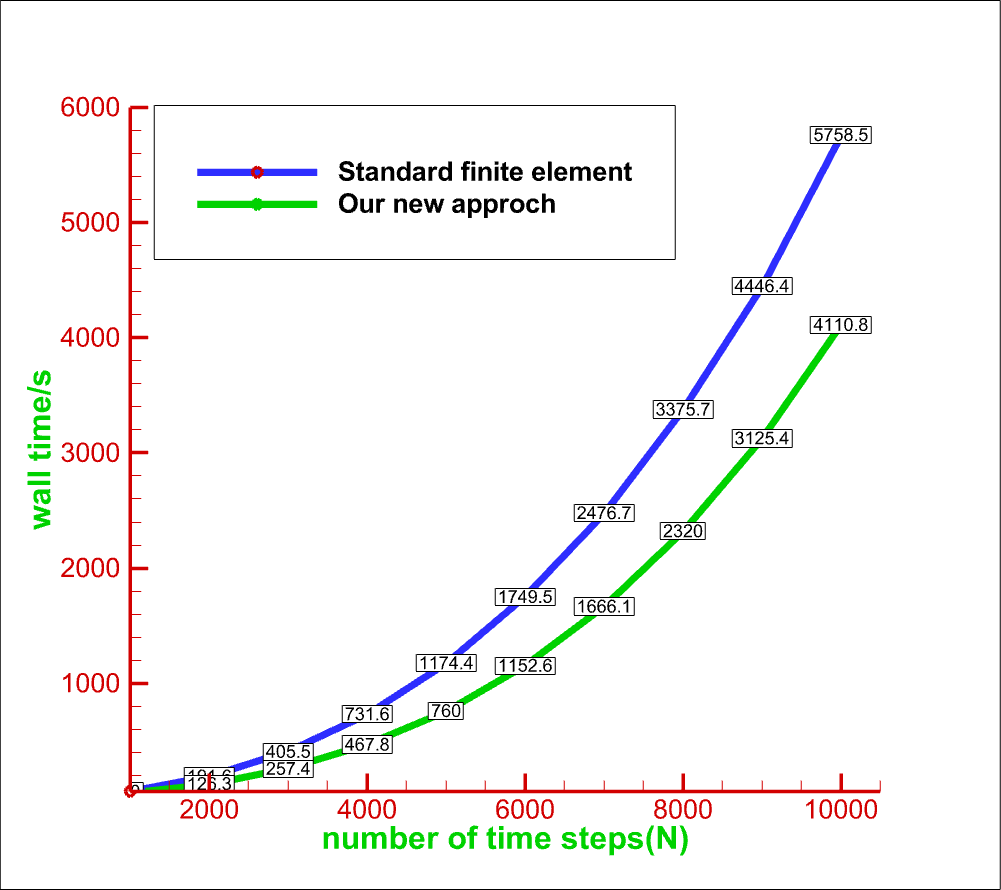}
	\end{minipage}
	%\qquad
	\begin{minipage}{0.49\linewidth}
		\centering
		\includegraphics[width=0.9\linewidth]{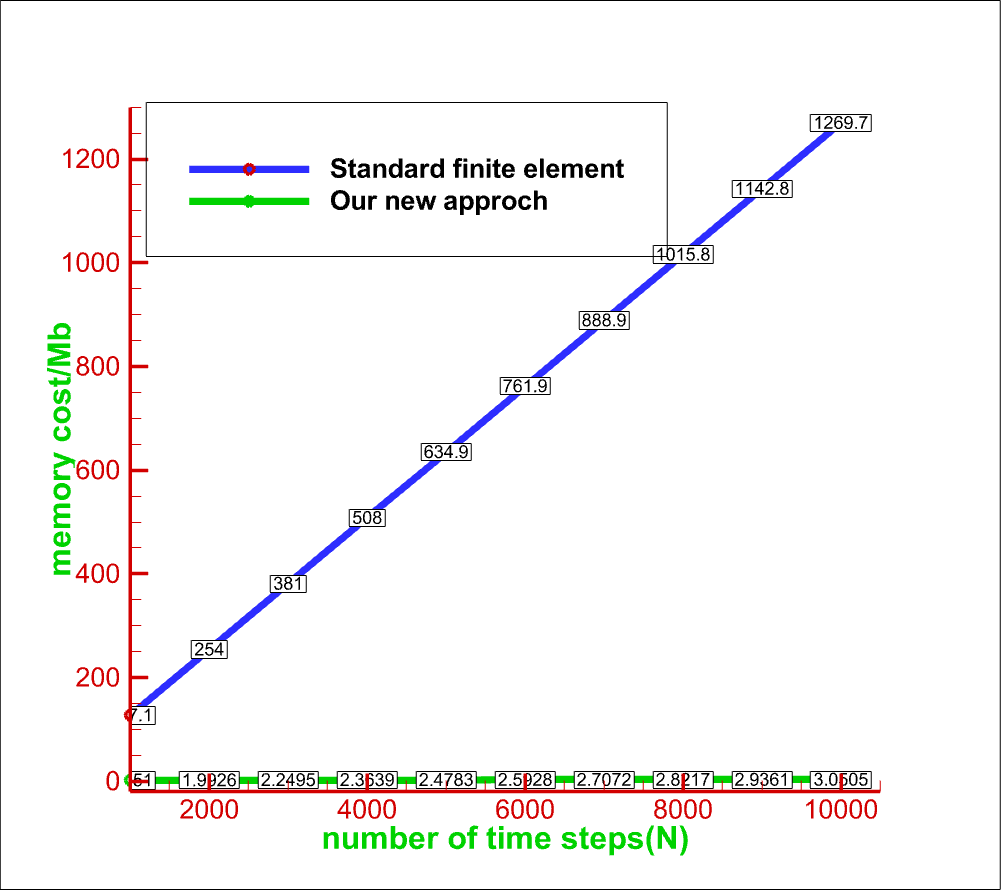}
	\end{minipage}
	\caption{A comparison of wall time and memory costs between the two algorithms is conducted for various time steps when $ \displaystyle K(t)=e^{-\lambda t}\frac{1}{\Gamma(\alpha)}t^{\alpha-1} $ with $ \alpha=0.8,\lambda=0.2 $, specifically when $h=1/128$ and $\Delta t=10^{-4}$.}
	\label{figure--2}
\end{figure}
\begin{example}
	
	In this example, we consider a more general case, which is an extension of \Cref{example--2}. We consider the following non-fickian model with variable-order
	Caputo fractional derivative:
	\begin{align*}
		u_t+\mathcal{A} u+\frac{1}{\Gamma(1-\alpha(t))}\int_0^t\frac{1}{(t-s)^{\alpha(t)}}\mathcal{B}u(s)ds=f(x,t).
	\end{align*}
	where $ \mathcal{A}=\mathcal{B}=-\Delta $ and  exact solution $ u $ is: 
	\begin{align*}
		u&=xy(1-x)(1-y)t, 
	\end{align*}
	the  sourse term $ f $  is defined as follows :
	\begin{align*}
		f(x,y,t)&=xy(1-x)(1-y)+2t(x+y-x^2-y^2)+\frac{2t^{2-\alpha(t)}}{\Gamma(3-\alpha(t))}(x+y-x^2-y^2),\\
		\alpha(t)&=\frac{1}{2}+\frac{1}{4}\sin{(5t)}.
	\end{align*}
	We use linear finite elements for spatial discretization and Back Euler method for temporal discretization. For approximation of Caputo fractional derivative , we use $ L^1 $ convolution quadrature  method from \cite{ShenS.2012Ntft}:
	\begin{align*}
		&\frac{1}{\Gamma(1-\alpha(t_n))}\int_{0}^{t_n}\frac{1}{(t_n-s)^{\alpha(t_n)}}(\nabla u(s),\nabla v_h)ds\\
		&\approx \frac{1}{\Gamma(1-\alpha_n)}\sum_{j=1}^{n}\int_{t_{j-1}}^{t_j}\frac{1}{(t_n-s)^{\alpha_n}}ds(\nabla u_h^j,\nabla v_h)\\
		&=\sum_{j=1}^{n}\frac{1}{\Gamma(2-\alpha_n)}[(t_n-t_{j-1})^{1-\alpha_n}-(t_n-t_j)^{1-\alpha_n}](\nabla u_h^j,\nabla v_h)\\
		&=\sum_{j=1}^{n}\beta_{n,j}(\nabla u_h^j,\nabla v_h),
	\end{align*}
	where
	\begin{align*}
		\beta_{n,j}=\frac{1}{\Gamma(2-\alpha_n)}[(t_n-t_{j-1})^{1-\alpha_n}-(t_n-t_j)^{1-\alpha_n}],\ \alpha_n=\alpha(t_n).
	\end{align*}
	Different from other fast algorithms in many literatures  , our innovative method stems from data science perspective and attains uniformity in spite of the form of kernel $ K(t) $. In this example, we compare the $ L^2 $  error norm between the finite element solution, the exact solution , and the solution obtained via our novel method and record wall time comsumed by two methods in \cref{new-table 4}. It is well known that the storage cost   and computational cost of standard $ L^1 $ method are $ \mathcal{O}(mn) $ and $ \mathcal{O}(mn^2) $, while the storage and computational cost of our new method are $ \mathcal{O}((m+n)r) $ and $ \mathcal{O}(mnr+rn^2) $ . During this example, we use the  size of SVD matrix $ \mathtt{size(\Sigma,1)}$ to    evaluate $ r $ approximately and record  change of $ r $ as time step $ N $ increases in \cref{Rank-Compare}, which suggests that  rank of data is more less than time step $N  $ . Numerical experiment demonstrates our innovative method keeps an almostly same error as standard $ L^1 $ method. Rigorous theoretical analysis for variable-order Caputo fractional derivative will be carried out in future investigation. 
	\begin{table}[H]
		\centering
		\begin{tabular}{c|c|c|c|c|c}
			\Xhline{1pt}
			$\frac{h}{\sqrt{2}}$  & $ \|u_h^N-u(T)\| $  & $ \|\widehat u_h^N-u(T)\| $  & $ \|u_h^N-\widehat{u}_h^N\| $ &time 1/s &time 2/s \\ \Xhline{1pt}
			$ 1/2^2 $  & 5.37E-03     &5.37E-03   & 9.65E-14     & 1.19   & 1.44       \\ \hline
			$ 1/2^3 $  & 1.42E-03     & 1.42E-03   & 2.33E-14      & 2.68   & 2.43       \\ \hline
			$ 1/2^4 $  & 3.61E-04    & 3.61E-04    & 3.73E-14        &7.31  & 2.97     \\ \hline
			$ 1/2^5 $ & 9.22E-05      & 9.22E-05   & 2.07E-14       & 25.59   & 7.01     \\ \hline
			$ 1/2^6 $  &2.47E-05    & 2.47E-05   & 1.13E-14      & 97.01  & 25.21    \\ \hline
			$ 1/2^7 $  & 7.82E-06    & 7.82E-06   &6.09E-15     & 395.70    & 139.61      \\  \hline
			$ 1/2^8 $  &3.69E-06    & 3.69E-06   & 3.16E-15     & 1572.78  & 532.11    \\ 
			\Xhline{1pt}
		\end{tabular}
		\caption{The $ L^2 $ error norm of $\| u_h^N-u(T)\|$, $\|\widehat u_h^N-u(T) \|$  ,$ \|u_h^N-\widehat{u}_h^N\| $   and time comparsion for $\Delta t = \frac{1}{4000}$ : time 1  and time 2 are wal time consumed by traditional method and our innovative method. }
		\label{new-table 4}
	\end{table}
	\begin{figure}
		\centering %表示居中
		\includegraphics[height=4.5cm,width=7.5cm]{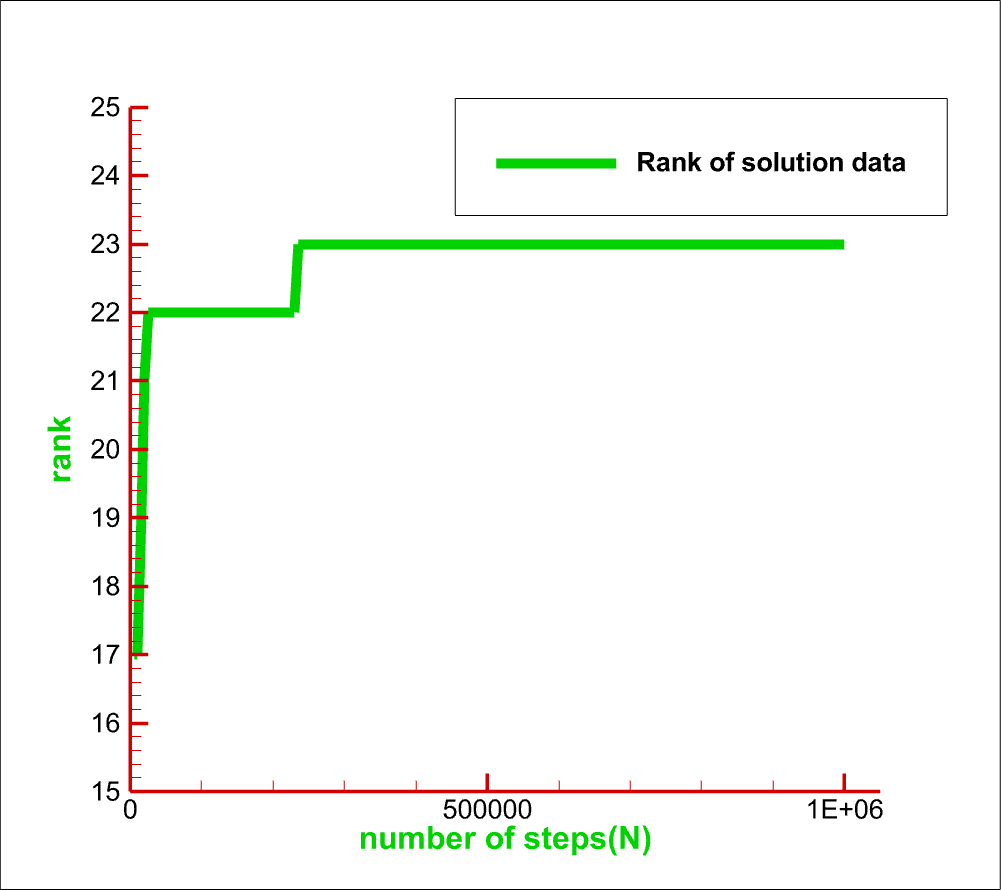}
		\caption{rank changes with respect to  number of time steps $ N $}
		\label{Rank-Compare}
	\end{figure}

\end{example}

\section{Conclusion}
In this paper, we present a novel and efficient algorithm for resolving Non-Fickian flows. Our method draws inspiration from the incremental Singular Value Decomposition (SVD) technique commonly employed in data science. Notably, this approach exclusively operates on data intrinsic to the problem at hand. This adaptability leads us to posit its suitability for a range of significant models that encompass a memory component. A couple of such models include the Debye memory-enhanced Maxwell's equations \cite{MR2747091}. These promising avenues form the focus of our upcoming research endeavors.

\bibliographystyle{siam}
\bibliography{mybib}

\begin{thebibliography}{10}

\bibitem{MR1686149}
{\sc W.~Allegretto, Y.~Lin, and A.~Zhou}, {\em Long-time stability of finite
  element approximations for parabolic equations with memory}, Numer. Methods
  Partial Differential Equations, 15 (1999), pp.~333--354.

\bibitem{MR3778339}
{\sc S.~Barbeiro, S.~G. Bardeji, J.~A. Ferreira, and L.~Pinto}, {\em
  Non-{F}ickian convection-diffusion models in porous media}, Numer. Math., 138
  (2018), pp.~869--904.

\bibitem{MR2670111}
{\sc N.~Bauermeister and S.~Shaw}, {\em Finite-element approximation of
  non-{F}ickian polymer diffusion}, IMA J. Numer. Anal., 30 (2010),
  pp.~702--730.

\bibitem{brand2002incremental}
{\sc M.~Brand}, {\em Incremental singular value decomposition of uncertain data
  with missing values}, in Computer Vision—ECCV 2002: 7th European Conference
  on Computer Vision Copenhagen, Denmark, May 28--31, 2002 Proceedings, Part I
  7, Springer, 2002, pp.~707--720.

\bibitem{MR2214744}
\leavevmode\vrule height 2pt depth -1.6pt width 23pt, {\em Fast low-rank
  modifications of the thin singular value decomposition}, Linear Algebra
  Appl., 415 (2006), pp.~20--30.

\bibitem{ChenC.1992FEAo}
{\sc C.~Chen, V.~Thomée, and L.~B. Wahlbin}, {\em Finite element approximation
  of a parabolic integro-differential equation with a weakly singular kernel},
  Mathematics of computation, 58 (1992), pp.~587--602.

\bibitem{MR3614283}
{\sc M.~Chen and W.~Deng}, {\em A second-order accurate numerical method for
  the space-time tempered fractional diffusion-wave equation}, Appl. Math.
  Lett., 68 (2017), pp.~87--93.

\bibitem{ChenYanping2012Read}
{\sc Y.~Chen, Y.~Huang, and T.~Hou}, {\em Richardson extrapolation and defect
  correction of mixed finite element methods for elliptic optimal control
  problems}, Journal of the Korean Mathematical Society, 49 (2012),
  pp.~549--569.

\bibitem{EduardoCuesta2006Cqtd}
{\sc E.~Cuesta, C.~Lubich, and C.~Palencia}, {\em Convolution quadrature time
  discretization of fractional diffusion-wave equations}, Mathematics of
  computation, 75 (2006), pp.~673--696.

\bibitem{MR1951906}
{\sc R.~E. Ewing, Y.~Lin, T.~Sun, J.~Wang, and S.~Zhang}, {\em Sharp
  {$L^2$}-error estimates and superconvergence of mixed finite element methods
  for non-{F}ickian flows in porous media}, SIAM J. Numer. Anal., 40 (2002),
  pp.~1538--1560.

\bibitem{MR4017489}
{\sc H.~Fareed and J.~R. Singler}, {\em Error analysis of an incremental proper
  orthogonal decomposition algorithm for {PDE} simulation data}, J. Comput.
  Appl. Math., 368 (2020), pp.~112525, 14.

\bibitem{MR3775096}
{\sc H.~Fareed, J.~R. Singler, Y.~Zhang, and J.~Shen}, {\em Incremental proper
  orthogonal decomposition for {PDE} simulation data}, Comput. Math. Appl., 75
  (2018), pp.~1942--1960.

\bibitem{MR3557149}
{\sc J.~A. Ferreira and L.~Pinto}, {\em An integro-differential model for
  non-{F}ickian tracer transport in porous media: validation and numerical
  simulation}, Math. Methods Appl. Sci., 39 (2016), pp.~4736--4749.

\bibitem{frolov2017tensor}
{\sc E.~Frolov and I.~Oseledets}, {\em Tensor methods and recommender systems},
  Wiley Interdisciplinary Reviews: Data Mining and Knowledge Discovery, 7
  (2017), p.~e1201.

\bibitem{MR2167744}
{\sc L.~Giraud, J.~Langou, and M.~Rozloznik}, {\em The loss of orthogonality in
  the {G}ram-{S}chmidt orthogonalization process}, Comput. Math. Appl., 50
  (2005), pp.~1069--1075.

\bibitem{GuoLing2019Emmf}
{\sc L.~Guo, F.~Zeng, I.~Turner, K.~Burrage, and G.~E. Karniadakis}, {\em
  Efficient multistep methods for tempered fractional calculus: Algorithms and
  simulations}, SIAM journal on scientific computing, 41 (2019),
  pp.~A2510--A2535.

\bibitem{GuoYingwen2022Ceaf}
{\sc Y.~Guo and Y.~He}, {\em Crank–nicolson extrapolation and finite element
  method for the oldroyd fluid with the midpoint rule}, Journal of
  computational and applied mathematics, 415 (2022), p.~114453.

\bibitem{hemati2014dynamic}
{\sc M.~S. Hemati, M.~O. Williams, and C.~W. Rowley}, {\em Dynamic mode
  decomposition for large and streaming datasets}, Physics of Fluids, 26
  (2014).

\bibitem{HuangYuxiang2022Eeot}
{\sc Y.~Huang, F.~Zeng, and L.~Guo}, {\em Error estimate of the fast l1 method
  for time-fractional subdiffusion equations}, Applied mathematics letters, 133
  (2022), p.~108288.

\bibitem{MR2447252}
{\sc S.~Jia, D.~Li, and S.~Zhang}, {\em Asymptotic expansions and {R}ichardson
  extrapolation of approximate solutions for integro-differential equations by
  mixed finite element methods}, Adv. Comput. Math., 29 (2008), pp.~337--356.

\bibitem{MR3639246}
{\sc S.~Jiang, J.~Zhang, Q.~Zhang, and Z.~Zhang}, {\em Fast evaluation of the
  {C}aputo fractional derivative and its applications to fractional diffusion
  equations}, Commun. Comput. Phys., 21 (2017), pp.~650--678.

\bibitem{MR1713237}
{\sc Z.~Jiang}, {\em {$L^\infty(L^2)$} and {$L^\infty(L^\infty)$} error
  estimates for mixed methods for integro-differential equations of parabolic
  type}, M2AN Math. Model. Numer. Anal., 33 (1999), pp.~531--546.

\bibitem{J.-P.Kauthen1997Ccat}
{\sc J.-P. Kauthen and H.~Brunner}, {\em Continuous collocation approximations
  to solutions of first kind volterra equations}, Mathematics of computation,
  66 (1997), pp.~1441--1459.

\bibitem{KwonKiwoon2003Apmf}
{\sc K.~Kwon and D.~Sheen}, {\em A parallel method for the numerical solution
  of integro-differential equation with positive memory}, Computer methods in
  applied mechanics and engineering, 192 (2003), pp.~4641--4658.

\bibitem{LiXiaoli2016Atbf}
{\sc X.~Li and H.~Rui}, {\em A two-grid block-centered finite difference method
  for nonlinear non-fickian flow model}, Applied mathematics and computation,
  281 (2016), pp.~300--313.

\bibitem{MR3084175}
{\sc H.~Liang and H.~Brunner}, {\em Integral-algebraic equations: theory of
  collocation methods {I}}, SIAM J. Numer. Anal., 51 (2013), pp.~2238--2259.

\bibitem{LIANGHUI2016IETO}
{\sc H.~LIANG and H.~BRUNNER}, {\em Integral-algebraic equations: Theory of
  collocation methods ii}, SIAM journal on numerical analysis, 54 (2016),
  pp.~2640--2663.

\bibitem{LiangHui2019Tcoc}
{\sc H.~Liang and H.~Brunner}, {\em The convergence of collocation solutions in
  continuous piecewise polynomial spaces for weakly singular volterra integral
  equations}, SIAM journal on numerical analysis, 57 (2019), pp.~1875--1896.

\bibitem{LiangHui2020Cmfi}
\leavevmode\vrule height 2pt depth -1.6pt width 23pt, {\em Collocation methods
  for integro-differential algebraic equations with index 1}, IMA journal of
  numerical analysis, 40 (2020), pp.~850--885.

\bibitem{MR1034918}
{\sc J.~C. L\'{o}pez~Marcos}, {\em A difference scheme for a nonlinear partial
  integrodifferential equation}, SIAM J. Numer. Anal., 27 (1990), pp.~20--31.

\bibitem{LubichC.1988Cqad}
{\sc C.~Lubich}, {\em Convolution quadrature and discretized operational
  calculus. i}, Numerische Mathematik, 52 (1988), pp.~129--145.

\bibitem{MR0923707}
\leavevmode\vrule height 2pt depth -1.6pt width 23pt, {\em Convolution
  quadrature and discretized operational calculus. {I}}, Numer. Math., 52
  (1988), pp.~129--145.

\bibitem{LuoMan2013CQBN}
{\sc M.~Luo, D.~Xu, and L.~Li}, {\em Crank-nicolson quasi-wavelet based
  numerical method for volterra integro-differential equations on unbounded
  spatial domains}, East Asian journal on applied mathematics, 3 (2013),
  pp.~283--292.

\bibitem{MR3335208}
{\sc G.~Murali Mohan~Reddy and R.~K. Sinha}, {\em Ritz-{V}olterra
  reconstructions and {\it a posteriori} error analysis of finite element
  method for parabolic integro-differential equations}, IMA J. Numer. Anal., 35
  (2015), pp.~341--371.

\bibitem{MR2608473}
{\sc K.~Mustapha and H.~Mustapha}, {\em A second-order accurate numerical
  method for a semilinear integro-differential equation with a weakly singular
  kernel}, IMA J. Numer. Anal., 30 (2010), pp.~555--578.

\bibitem{OstermannAlexander2023EERM}
{\sc A.~Ostermann, F.~Saedpanah, and N.~Vaisi}, {\em Explicit exponential
  runge-kutta methods for semilinear integro-differential equations}, SIAM
  journal on numerical analysis, 61 (2023), pp.~1405--1425.

\bibitem{MR3594691}
{\sc G.~M. Oxberry, T.~Kostova-Vassilevska, W.~Arrighi, and K.~Chand}, {\em
  Limited-memory adaptive snapshot selection for proper orthogonal
  decomposition}, Internat. J. Numer. Methods Engrg., 109 (2017), pp.~198--217.

\bibitem{MR1897408}
{\sc A.~K. Pani and G.~Fairweather}, {\em {$H^1$}-{G}alerkin mixed finite
  element methods for parabolic partial integro-differential equations}, IMA J.
  Numer. Anal., 22 (2002), pp.~231--252.

\bibitem{MR2580558}
{\sc A.~K. Pani, G.~Fairweather, and R.~I. Fernandes}, {\em A{DI} orthogonal
  spline collocation methods for parabolic partial integro-differential
  equations}, IMA J. Numer. Anal., 30 (2010), pp.~248--276.

\bibitem{MR1393903}
{\sc A.~K. Pani and T.~E. Peterson}, {\em Finite element methods with numerical
  quadrature for parabolic integrodifferential equations}, SIAM J. Numer.
  Anal., 33 (1996), pp.~1084--1105.

\bibitem{QiuWenlin2023AFEG}
{\sc W.~Qiu, G.~Fairweather, X.~Yang, and H.~Zhang}, {\em Adi finite element
  galerkin methods for two-dimensional tempered fractional integro-differential
  equations}, Calcolo, 60 (2023).

\bibitem{QiuWenlin2020Atta}
{\sc W.~Qiu, D.~Xu, J.~Guo, and J.~Zhou}, {\em A time two-grid algorithm based
  on finite difference method for the two-dimensional nonlinear time-fractional
  mobile/immobile transport model}, Numerical algorithms, 85 (2020),
  pp.~39--58.

\bibitem{MR2272610}
{\sc B.~Rivi\`ere and S.~Shaw}, {\em Discontinuous {G}alerkin finite element
  approximation of nonlinear non-{F}ickian diffusion in viscoelastic polymers},
  SIAM J. Numer. Anal., 44 (2006), pp.~2650--2670.

\bibitem{ross2008incremental}
{\sc D.~A. Ross, J.~Lim, R.-S. Lin, and M.-H. Yang}, {\em Incremental learning
  for robust visual tracking}, International journal of computer vision, 77
  (2008), pp.~125--141.

\bibitem{MR2231714}
{\sc A.~Sch\"{a}dle, M.~L\'{o}pez-Fern\'{a}ndez, and C.~Lubich}, {\em Fast and
  oblivious convolution quadrature}, SIAM J. Sci. Comput., 28 (2006),
  pp.~421--438.

\bibitem{MR2747091}
{\sc S.~Shaw}, {\em Finite element approximation of {M}axwell's equations with
  {D}ebye memory}, Adv. Numer. Anal.,  (2010), pp.~Art. ID 923832, 28.

\bibitem{ShenS.2012Ntft}
{\sc S.~Shen, F.~Liu, J.~Chen, I.~Turner, and V.~Anh}, {\em Numerical
  techniques for the variable order time fractional diffusion equation},
  Applied mathematics and computation, 218 (2012), pp.~10861--10870.

\bibitem{MR2206438}
{\sc R.~K. Sinha, R.~E. Ewing, and R.~D. Lazarov}, {\em Some new error
  estimates of a semidiscrete finite volume element method for a parabolic
  integro-differential equation with nonsmooth initial data}, SIAM J. Numer.
  Anal., 43 (2006), pp.~2320--2343.

\bibitem{MR2551194}
\leavevmode\vrule height 2pt depth -1.6pt width 23pt, {\em Mixed finite element
  approximations of parabolic integro-differential equations with nonsmooth
  initial data}, SIAM J. Numer. Anal., 47 (2009), pp.~3269--3292.

\bibitem{MR859017}
{\sc I.~H. Sloan and V.~Thom\'{e}e}, {\em Time discretization of an
  integro-differential equation of parabolic type}, SIAM J. Numer. Anal., 23
  (1986), pp.~1052--1061.

\bibitem{MR1220827}
{\sc V.~Thom\'{e}e and L.~B. Wahlbin}, {\em Long-time numerical solution of a
  parabolic equation with memory}, Math. Comp., 62 (1994), pp.~477--496.

\bibitem{WangWansheng2018Teaf}
{\sc W.~Wang and Q.~Hong}, {\em Two-grid economical algorithms for parabolic
  integro-differential equations with nonlinear memory}, 2018.

\bibitem{MR3958333}
\leavevmode\vrule height 2pt depth -1.6pt width 23pt, {\em Two-grid economical
  algorithms for parabolic integro-differential equations with nonlinear
  memory}, Appl. Numer. Math., 142 (2019), pp.~28--46.

\bibitem{MR1163355}
{\sc Y.~Yan and G.~Fairweather}, {\em Orthogonal spline collocation methods for
  some partial integrodifferential equations}, SIAM J. Numer. Anal., 29 (1992),
  pp.~755--768.

\bibitem{YangXuehua2013Cmfs}
{\sc X.~Yang, D.~Xu, and H.~Zhang}, {\em Crank–nicolson/quasi-wavelets method
  for solving fourth order partial integro-differential equation with a weakly
  singular kernel}, Journal of computational physics, 234 (2013), pp.~317--329.

\bibitem{zhang2022answer}
{\sc Y.~Zhang}, {\em An answer to an open question in the incremental {SVD}},
  arXiv:2204.05398,  (2022).

\end{thebibliography}

\end{document}